\newcommand{\pushright}[1]{\ifmeasuring@#1\else\omit\hfill$\displaystyle#1$\fi\ignorespaces}
\newcommand{\pushleft}[1]{\ifmeasuring@#1\else\omit$\displaystyle#1$\hfill\fi\ignorespaces}
\DeclareMathOperator{\argmin}{argmin}
\DeclareMathOperator{\loc}{loc}
\DeclareMathAlphabet{\mathpzc}{OT1}{pzc}{m}{it}
\DeclareMathOperator\id{Id}
\DeclareMathOperator{\dive}{div}
\DeclareMathOperator{\diam}{diam}
\DeclareMathOperator{\genus}{genus}
\newcommand{\Chi}{\mathcal{X}}
\newcommand{\R}{\ensuremath{\mathbb{R}}}
\newcommand{\N}{\ensuremath{\mathbb{N}}}
\def\Z{\mathbb{Z}}
\newcommand{\F}{\ensuremath{\mathcal{F}}}
\newcommand{\Ha}{\ensuremath{\mathcal{H}}}
\def\W{W}
\def\H{\mathcal{H}}
\def\M{\mathcal{M}}
\def\calM{\mathcal{M}}
\def\lt{\left}
\def\rt{\right}
\def\les{\lesssim}
\def\ges{\gtrsim}
\def\eps{\varepsilon}
\def\Xint#1{\mathchoice
{\XXint\displaystyle\textstyle{#1}}%
{\XXint\textstyle\scriptstyle{#1}}%
{\XXint\scriptstyle\scriptscriptstyle{#1}}%
{\XXint\scriptscriptstyle\scriptscriptstyle{#1}}%
\!\int}
\def\XXint#1#2#3{{\setbox0=\hbox{$#1{#2#3}{\int}$ }
\vcenter{\hbox{$#2#3$ }}\kern-.57\wd0}}
\def\dashint{\Xint-}
\newcommand{\mres}{\mathbin{\vrule height 1.6ex depth 0pt width 0.13ex\vrule height 0.13ex depth 0pt width 1.3ex}}
\theoremstyle{plain}
\numberwithin{equation}{section}
\newtheorem{theorem}{Theorem}[section]
\newtheorem{corollary}[theorem]{Corollary}
\newtheorem{lemma}[theorem]{Lemma}
\newtheorem{proposition}[theorem]{Proposition}
\theoremstyle{definition}
\newtheorem{remark}[theorem]{Remark}
\newtheorem*{remark-short}{Remark}
\begin{document}
\title[Estimates for bending energies and non-local variational problems]{Quantitative estimates for bending energies and applications to non-local variational problems}
\author{Michael Goldman}
\address{Michael Goldman, Laboratoire Jacques-Louis Lions (CNRS, UMR 7598), Universit\'e Paris Diderot, F-75005, Paris, France}
\email{goldman@math.univ-paris-diderot.fr}

\author{Matteo Novaga}
\address{Matteo Novaga, Dipartimento di Matematica, Universit\`a di Pisa, 
Largo B. Pontecorvo 5, 56127 Pisa, Italy}
\email{matteo.novaga@unipi.it}

\author{Matthias R{\"o}ger}
\address{Matthias R{\"o}ger, Technische Universit\"at Dortmund, Fakult\"at f\"ur Mathematik, Vogelpothsweg 87, D-44227 Dortmund, Germany}
\email{matthias.roeger@tu-dortmund.de}

\subjclass[2000]{49J20,53C21,49Q45,76W99}

\keywords{Geometric variational problems, competing interactions, non-local perimeter perturbation, Willmore functional, bending energy, global minimizers}

\date{\today}

\begin{abstract}
We discuss a variational model, given by a weighted sum of perimeter, bending and  Riesz interaction energies, that could be considered as a toy model for charged elastic drops. 
The different contributions have competing preferences for strongly localized and maximally dispersed structures. We investigate the energy landscape in dependence of the size of the `charge', i.e.~the weight of the Riesz interaction energy.

In the two-dimensional case we first prove that for simply connected sets of small elastica energy, the elastica deficit controls the isoperimetric deficit. 
Building on this result, we show that for small charge the only minimizers of the full variational model are either balls or centered annuli. 
We complement these statements by a non-existence result for large charge.
In three dimensions, we prove area and diameter bounds for configurations with small Willmore energy and show that balls are the unique minimizers of our variational model for sufficiently small charge.
\end{abstract}

\maketitle

\tableofcontents

\section{Introduction}
\label{sec:intro}
In recent years there has been a strong interest in variational models involving a competition between a perimeter type energy and a repulsive term of long-range nature
(see for instance the recent review papers \cite{ChMuTo,GolRuf} and the detailed discussion below). The aim of this paper is to start investigating the
 effects for this class of problems of higher order interfacial energies such as the Euler elastica in dimension two or the Willmore energy in dimension three. We will consider the simplest 
possible setting and study volume constrained minimization of functionals defined for sets $E\subset \R^d$ with $d=2,3$ as  

\begin{equation}
	\lambda P(E)+ \mu W(E)+QV_\alpha(E),\quad \lambda,\mu,Q\geq 0. \label{eq:fun-pre}
\end{equation}
The different contributions are given by
\begin{itemize}
\item the perimeter $P$, defined as
\begin{equation*}
	P(E) = \Ha^{d-1}(\partial E), 
\end{equation*}
\item the elastica or Willmore energy $W$, defined as
\begin{equation*}	W(E) = 
	\begin{cases}
		\displaystyle\int_{\partial E} H^2\,d\Ha^1 \quad&\text{ for }d=2,
		\\
		\\
		\displaystyle\frac{1}{4}\int_{\partial E} H^2\,d\Ha^2\quad&\text{ for }d=3,
	\end{cases}
\end{equation*}
where  $H$ denotes the mean curvature of $\partial E$ i.e.~the curvature in dimension two and the sum of the principal curvatures in dimension three\footnote{We choose to keep the 
factor $\frac{1}{4}$ in dimension three to stick with the traditional notation.},\\ 
\item the Riesz interaction energy $V_\alpha$, defined for $\alpha\in(0,d)$ as
\begin{equation*}
	V_\alpha(E) = \int_{E\times E} \frac{1}{|x-y|^{d-\alpha}}\,dx\, dy. 
\end{equation*}
\end{itemize}

For $\mu=0$ functional \eqref{eq:fun-pre} is arguably the simplest example of an isoperimetric type problem showing competition between a local attractive term with a non-local repulsive term.
In the case of Coulombic interactions, that is $d=3$ and $\alpha=2$, this model  appears in a variety of contexts. It is for instance the  celebrated Gamow liquid drop model for atomic nuclei \cite{Gamo30} or 
the sharp interface limit of the so-called Ohta-Kawasaki model for diblock copolymers \cite{OhtaKawasaki,BaFr99}. See also \cite{pasta} for another application of this model. 
Even though the picture is not complete, it has been shown that minimizers are balls for small $Q$ \cite{KnMu13,KnMu14,CiSp13,julin,F2M3} (actually they are the only stable critical sets \cite{julin2}) and  that no minimizers exist for large $Q$ \cite{KnMu13,KnMu14,LuOt14} (see also \cite{FrKN16} for a simple proof of non-existence in the three dimensional case).
Many more questions related to pattern formation have been investigated for very closely related  models, see for instance 
\cite{AlCO09, AcFuMo,ChPe10,ReWe08,BoCr14,GolRun,GMS,KnMN16} for a non-exhaustive list. Other examples of functionals presenting this type of competition can be found for instance in shape memory alloys \cite{KM,KKO}, 
micromagnetics \cite{DKMO} or epitaxial growth \cite{BGZ,Leoni}. However, the closest model is probably the one for charged liquid drops introduced in \cite{Ray} where the Riesz interaction energy is
replaced by a capacitary term. Surprisingly, it has been shown in \cite{GolNoRuf} that independently of the charge, no minimizers exist for this model (see also \cite{MurNov,MurNovRuf}). It has been suggested
in \cite{GolRuf} that a regularization by a Willmore type energy such as the one considered here might restore well-posedness. This paper can be seen as a first step in this direction. \\ 
The energy \eqref{eq:fun-pre} could be seen as a toy model for charged elastic vesicles, where the Willmore energy represents a prototype of more general bending energies for fluid membranes and a Coulomb self-interactions refers to the energy of a uniformly charged body. Associated with this interpretation, we refer to the parameter regimes $Q\ll 1$ and $Q\gg 1$ by the terms small and large `charge'.

Our goal is to understand how the picture changes for \eqref{eq:fun-pre} in the presence of a bending energy i.e.~for $\mu>0$.  For $d=2$ and fixed volume, since an annulus of 
large radius has arbitrary small elastica energy and also arbitrary small Riesz interaction energy, one needs to either restrict
the class of competitors to simply connected sets or to include the perimeter penalization (that is take $\lambda>0$). 
In contrast, for $d=3$, the Willmore functional is scaling invariant  and is globally minimized by   balls \cite{Simo83}. It seems then natural to consider \eqref{eq:fun-pre} for $\lambda=0$ and study the stability of the ball.
Let us point out that compared to the planar case, configurations with catenoid type parts allow for a much larger variety of structures with low energy. 
This makes the identification of optimal structures and the distinction between existence and non-existence of minimizers even more challenging.

\subsection*{Setting and main results}
Let us set some notation and give our main results. We will always assume that the energy \eqref{eq:fun-pre} contains the bending term and we set without loss of generality $\mu=1$.
For $\lambda,Q\geq 0$, we define
\begin{equation*}
	\F_{\lambda,Q} = \lambda P+W+QV_\alpha. 
\end{equation*}
Regarding the volume constraint, as will be better explained later on, up to a rescaling there is no loss of generality in assuming that $|E|=|B_1|$. 
For $d=2,3$, we define  the following classes of admissible sets
\begin{align*}
	\M\,&=\, \{ E\subset \R^d\text{ bounded with $W^{2,2}$-regular boundary}\},\\
	\M_{sc}\,&=\, \{ E\in\M\,:\, E\text{ simply connected}\},\\
	\M(|B_1|)\,&=\, \{ E\in\M\,:\, |E|=|B_1|\},\\
	\M_{sc}(|B_1|)\,&=\, \{ E\in\M_{sc}\,:\, |E|=|B_1|\},
\end{align*}
and consider the variational problems
\begin{equation}\label{problem2d}
 \min_{ \M_{sc}(|B_1|)} \F_{\lambda,Q}(E)
\end{equation}
and
\begin{equation}\label{problem2d-gen}
 \min_{ \M(|B_1|)} \F_{\lambda,Q}(E).
\end{equation}
We start by considering the planar problem $d=2$ and first focus on the uncharged case $Q=0$. For $\lambda=0$ no global minimizer exists in $\M(|B_1|)$, but it has been recently shown in \cite{BucurHenrot,FeKN16} 
that balls minimize the elastica energy under volume constraint in the class of simply connected sets. Our first result is a quantitative version of this fact in the spirit of the quantitative isoperimetric inequality \cite{FuMaPra,CicLeo}.
\begin{theorem}\label{theo:introstabball}
 There exists a universal constant $c_0>0$ such that for every set $E\in\M_{sc}(|B_1|)$, 
\begin{equation*}
	 W(E)-W(B_1)\,\ge\, 
 c_0 \min_{x} |E\Delta B_1(x)|^2,
\end{equation*}
where $E\Delta F$ denotes the symmetric difference of the sets $E$ and $F$.\\
Furthermore, there exist $\delta_0>0$ and  $c_1 >0$ such that if $W(E)\leq W(B_1)+\delta_0$, then
\begin{equation*}
 W(E)-W(B_1)\,\ge\, c_1 (P(E)-P(B_1)).
\end{equation*}
\end{theorem}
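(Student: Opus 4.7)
I would handle both inequalities simultaneously by reducing them to a Poincar\'e-type estimate near $B_1$. The starting point is an algebraic identity: for any $E \in \M_{sc}(|B_1|)$, Gauss--Bonnet yields $\int_{\partial E} \kappa \, d\Ha^1 = 2\pi$, so setting $\bar\kappa := 2\pi/P(E)$ and expanding,
\begin{equation*}
W(E) \,=\, \frac{4\pi^2}{P(E)} + \int_{\partial E}(\kappa - \bar\kappa)^2 \, d\Ha^1,
\end{equation*}
and hence
\begin{equation*}
W(E) - 2\pi \,=\, \int_{\partial E}(\kappa - \bar\kappa)^2 \, d\Ha^1 \,-\, \frac{2\pi(P(E) - 2\pi)}{P(E)}.
\end{equation*}
Since $2\pi/P(E) \le 1$ by the isoperimetric inequality, the task reduces to proving, for $W(E) - 2\pi$ small, a Poincar\'e-type bound $\int_{\partial E}(\kappa - \bar\kappa)^2 \, d\Ha^1 \ge C\bigl(P(E) - 2\pi\bigr)$ with some constant $C > 1$.

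I would establish this estimate by compactness and linearization around the unit disk. Standard diameter and area bounds for sets of bounded Willmore energy, combined with the Bucur--Henrot / Ferone--Kawohl--Nitsch rigidity result identifying $B_1$ as the unique volume-constrained minimizer of $W$ in $\M_{sc}(|B_1|)$, imply that any sequence $E_n \in \M_{sc}(|B_1|)$ with $W(E_n) \to 2\pi$ converges, after a suitable translation, to $B_1$ in the $W^{2,2}$ topology of normal graphs. Parametrizing $\partial E_n$ as the normal graph of a function $u_n$ on $\partial B_1$ and expanding in Fourier series $u_n = u_0 + \sum_{k\ge 1}(a_k \cos k\theta + b_k \sin k\theta)$, the area constraint forces $u_0$ to be of second order in $(a_k,b_k)_{k\ge 1}$, while the $k=1$ mode corresponds to infinitesimal translations of $B_1$ and can be annihilated by recentering at the barycenter of $E_n$. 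A second-order Taylor expansion then yields
\begin{align*}
P(E_n) - 2\pi &\,\approx\, \tfrac{\pi}{2}\sum_{k\ge 2}(k^2-1)(a_k^2+b_k^2),\\
\int_{\partial E_n}(\kappa - \bar\kappa)^2 \, d\Ha^1 &\,\approx\, \pi \sum_{k\ge 2}(k^2-1)^2(a_k^2+b_k^2).
\end{align*}
Since $k^2 - 1 \ge 3$ for $k \ge 2$, the ratio of the two sums is at least $6$, giving the Poincar\'e estimate with any $C < 6$ whenever $W(E) - 2\pi \le \delta_0$ with $\delta_0$ sufficiently small.

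Substituting back into the identity gives the second inequality with, say, $c_1 = 4$. The first inequality then follows by splitting into two regimes: when $W(E) - 2\pi \le \delta_0$, combine the second inequality with the Fusco--Maggi--Pratelli quantitative isoperimetric inequality $P(E) - 2\pi \ge c \min_x |E \Delta B_1(x)|^2$; when $W(E) - 2\pi \ge \delta_0$, the trivial bound $|E \Delta B_1(x)|^2 \le (2|B_1|)^2$ makes the inequality automatic with the constant $\delta_0/(2|B_1|)^2$. The main technical obstacle lies in the compactness/linearization step: one must make the convergence $u_n \to 0$ in $W^{2,2}(\partial B_1)$ quantitative, verify that the barycenter recentering suppresses the $k=1$ mode beyond merely the linear order, and absorb the cubic and higher-order Taylor remainders into the quadratic leading terms by means of interpolation estimates on $\partial B_1$. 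Only then does the spectral computation, valid a priori at the level of infinitesimal perturbations, extend uniformly to all $E \in \M_{sc}(|B_1|)$ with $W(E) - 2\pi \le \delta_0$.
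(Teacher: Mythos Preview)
Your proposal is correct and shares the paper's overall architecture --- reduce to nearly spherical sets via compactness (leaning on Bucur--Henrot rigidity), then linearize and exploit the spectral gap in Fourier modes $k\ge 2$ once the volume constraint kills $k=0$ and barycenter recentering kills $k=1$. Two organizational differences are worth noting. First, the paper Taylor-expands $W(E)-W(B_1)$ directly, obtaining $\int_0^{2\pi}(\ddot\phi^2+\tfrac32\phi^2+\tfrac32\dot\phi^2+4\phi\ddot\phi)$ after using the volume constraint; your Gauss--Bonnet identity $W(E)=4\pi^2/P(E)+\int_{\partial E}(\kappa-\bar\kappa)^2$ packages the same computation more transparently and isolates the perimeter deficit as the obstruction. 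Second, the paper proves both inequalities independently from the $W^{2,2}$ control (using $\int\phi^2\gtrsim |E\Delta B_1|^2$ for the asymmetry), whereas you prove the perimeter estimate first and then deduce the asymmetry estimate via the Fusco--Maggi--Pratelli inequality plus a trivial bound in the large-deficit regime; this is a legitimate shortcut and makes explicit that the second inequality is the stronger one.

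One caveat: you call the diameter and area bounds ``standard for sets of bounded Willmore energy.'' In the plane this is \emph{false} --- simply connected sets of fixed area and bounded elastica energy can have arbitrarily large diameter (dumbbells; see \cite[Figure~1]{BucurHenrot}). The bound holds only under the small-deficit hypothesis $W(E)\le 2\pi+\delta_0$, and its proof (Lemma~\ref{lem:quantWill} here) is not soft: one shows that a large-diameter set must have a near-pinch where $\partial E$ can be cut with controlled energy error, splitting $E$ into two drops whose combined elastica energy strictly exceeds $2\pi$ by a definite gap, contradicting the small-deficit assumption. You correctly flag compactness as the main technical obstacle, but this particular step is where the real work lies and should not be waved through.
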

The proof is based on the idea of \cite{CicLeo} for the proof of the quantitative isoperimetric inequality to reduce by a contradiction argument to the case of nearly spherical sets and then compute a Taylor expansion along the lines of \cite{fuglede}.
As opposed to \cite{CicLeo} which is based on an improved convergence theorem, we obtain the strong convergence to the ball directly from the energy and a delicate refinement of \cite{BucurHenrot} (see Lemma \ref{lem:quantWill2}).\\

Still in the case $Q=0$, we then remove the constraint on the sets to be simply connected but consider the minimization problem \eqref{problem2d-gen} for $\lambda>0$.
\begin{theorem}\label{theo:minlambda}
 Let $Q=0$ and $d=2$. There exists $\bar \lambda>0$ such that for $\lambda\in (0, \bar \lambda)$, minimizers  of \eqref{problem2d-gen} are annuli while for
$\lambda>\bar \lambda$ they are balls. 
\end{theorem}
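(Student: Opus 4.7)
The strategy is to reduce the minimization to a one-variable problem by bounding $\F_{\lambda,0}$ curve by curve. The two sharp ingredients are the classical isoperimetric inequality and the sharp Willmore inequality of Bucur--Henrot~\cite{BucurHenrot,FeKN16}: every simply connected planar $W^{2,2}$-domain $\Omega$ satisfies $W(\Omega)\geq 2\pi^{3/2}/\sqrt{|\Omega|}$, with equality iff $\Omega$ is a disk.

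Given $E\in\M(|B_1|)$, decompose $\partial E$ into its finitely many Jordan components, sorted into \emph{outer} curves (bounding each connected component of $E$) and \emph{inner} curves (bounding each hole of $E$); let $a_j$ and $b_k$ denote the corresponding enclosed areas, so that $\sum_j a_j-\sum_k b_k=|E|=\pi$. For each Jordan curve $\Gamma\subset\partial E$ with enclosed area $A(\Gamma)$, the isoperimetric inequality controls $L(\Gamma)$ from below and Bucur--Henrot applied to the simply connected Jordan domain it bounds controls $\int_\Gamma \kappa^2\,d\H^1$ (only $\kappa^2$ enters, so the orientation of $\Gamma$ within $\partial E$ is irrelevant). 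This gives
\begin{equation*}
 \lambda L(\Gamma)+\int_\Gamma \kappa^2\,d\H^1 \,\geq\, f(A(\Gamma)),\qquad f(a):=2\lambda\sqrt{\pi a}+\frac{2\pi^{3/2}}{\sqrt{a}},
\end{equation*}
with equality iff $\Gamma$ is a circle. Since both $a\mapsto\sqrt{a}$ and $a\mapsto 1/\sqrt{a}$ are strictly superadditive on $(0,\infty)$, so is $f$; summing over Jordan components and applying superadditivity separately to the outer and inner families yields, with $A:=\sum_j a_j$ and $B:=\sum_k b_k$,
\begin{equation*}
 \F_{\lambda,0}(E) \,\geq\, f(A)+f(B) \,=\, f(\pi+B)+f(B)
\end{equation*}
(with the $f(B)$-term dropped if $B=0$). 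Equality forces $E$ to have exactly one outer and at most one inner boundary, each a circle; i.e.\ $E$ is a ball or the region between two nested circles.

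It thus suffices to analyze $m(\lambda):=\min_{B>0}[f(\pi+B)+f(B)]$ (the minimum over annular competitors) and compare it with $\F_{\lambda,0}(B_1)=f(\pi)=2\pi(1+\lambda)$. For each $\lambda>0$ the map $B\mapsto f(\pi+B)+f(B)$ blows up at both endpoints and admits a unique interior critical point $B^*(\lambda)>0$, corresponding to an annulus with radii $R^*=\sqrt{1+B^*/\pi}>1$ and $r^*=\sqrt{B^*/\pi}>0$. Direct asymptotics show $m(\lambda)\to 0$ as $\lambda\to 0^+$ (the optimal annulus becomes arbitrarily large and thin), while $m(\lambda)-f(\pi)\to+\infty$ as $\lambda\to\infty$. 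By the envelope theorem,
\begin{equation*}
 \frac{d}{d\lambda}\bigl(m(\lambda)-f(\pi)\bigr) \,=\, 2\pi(R^*+r^*)-2\pi \,>\, 0,
\end{equation*}
so $m-f(\pi)$ is strictly increasing and vanishes at exactly one $\bar\lambda>0$, giving annuli as minimizers for $0<\lambda<\bar\lambda$ and balls for $\lambda>\bar\lambda$.

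The main obstacle is the geometric bookkeeping in the first step: rigorously justifying the Jordan decomposition of $\partial E$ for arbitrary $E\in\M(|B_1|)$, confirming that Bucur--Henrot may be applied to each Jordan component independently of how the holes of $E$ are nested inside it, and extracting the precise equality cases of both the curve-wise inequality and the superadditivity argument, so that minimizers emerge exactly as balls or nested-circle annuli. Once this is in place, the remaining one-variable analysis is elementary.
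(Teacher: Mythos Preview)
Your approach is essentially that of the paper: apply the isoperimetric inequality and Bucur--Henrot curve by curve to the Jordan components of $\partial E$, use the (sub)additivity of $f$ to collapse to a single outer and a single inner circle, and then compare the resulting one-parameter family of annuli with the ball. Two small remarks: first, $a\mapsto\sqrt{a}$ and $a\mapsto 1/\sqrt{a}$ are strictly \emph{subadditive}, not superadditive---the inequality you actually use, $\sum_j f(a_j)\ge f\big(\sum_j a_j\big)$, is exactly subadditivity, so only the label is reversed; second, your envelope-theorem argument for the uniqueness of $\bar\lambda$ is a neat alternative to the paper's version, which simply observes that if $B_1$ minimizes $\F_{\lambda,0}$ then, by the isoperimetric inequality, it also minimizes $\F_{\lambda',0}$ for every $\lambda'>\lambda$, and then checks the two endpoint regimes directly.
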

Next, we  turn to the stability estimates analogous to Theorem \ref{theo:introstabball}.
\begin{theorem}\label{thm:stab_intro_Qzero}
  Let $d=2$ and $\bar \lambda$ be given by Theorem \ref{theo:minlambda}. Then, there exists a universal constant $c_2>0$, such that  for any $E\in \M(|B_1|)$ and $\lambda>\bar \lambda$
\begin{equation*}
  \F_{\lambda,0}(E)-\F_{\lambda,0}(B_1)\ge c_2 (\lambda-\bar \lambda)\min_x |E\Delta B_1(x)|^2,
\end{equation*}
while for any  $\lambda_*>0$ there exists a constant $c(\lambda_*)>0$ such that for any $\lambda\in [\lambda_*,\bar\lambda]$
\begin{equation*}
 \F_{\lambda,0}(E)- \min_{\M(|B_1|)}\F_{\lambda,0}\ge c(\lambda_*) \min_{\Omega} |E\Delta \Omega|^2,
\end{equation*}
where the minimum is taken among all sets $\Omega\in \M(|B_1|)$ such that $\Omega$ is a ball or $\Omega$
is an annulus minimizing $\F_{\lambda,0}$ in $\M(|B_1|)$.
\end{theorem}

Then, we turn to the study of \eqref{problem2d} and \eqref{problem2d-gen} for $Q>0$. Regarding \eqref{problem2d} we prove the following.
\begin{theorem}\label{thm:intro_ball_min_sc_Qpos}
Let $d=2$. There exists $Q_0>0$ such that for $Q<Q_0$ and all $\lambda\geq 0$, balls are the only minimizers of \eqref{problem2d}.
\end{theorem}
The proof is a combination of Theorem \ref{theo:introstabball} and \cite{KnMu13}. As for \eqref{problem2d-gen}, we obtain a good understanding of part of the phase diagram (see Figure \ref{fig:phasediagram}).
\begin{figure}\begin{center}
 \resizebox{9.5cm}{!}{\input{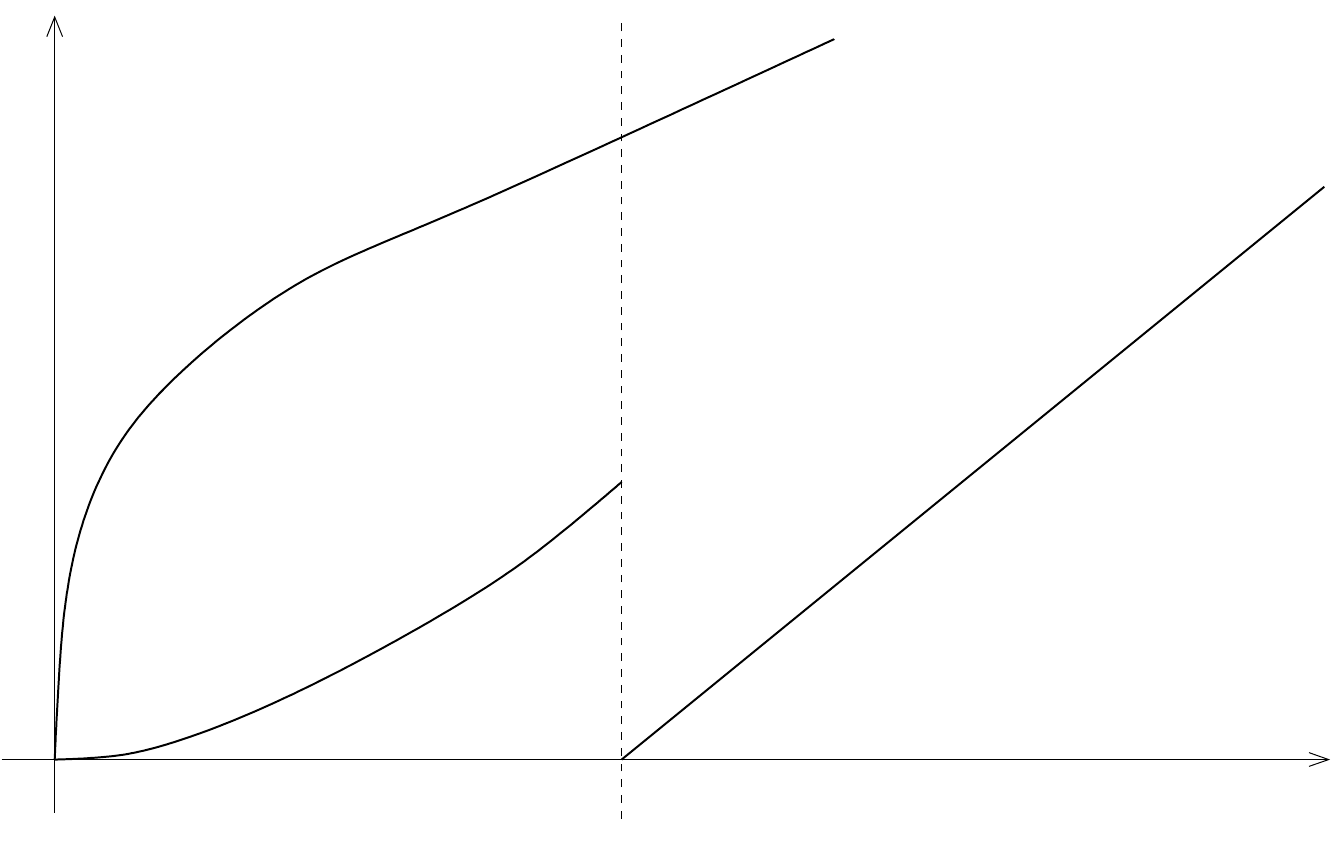_t}}
   \caption{The  phase diagram.} \label{fig:phasediagram}
 \end{center}
 \end{figure}
\begin{theorem}\label{thm:intro_min_Qpos}
Let $d=2$.
 \begin{itemize}
  \item There exists $Q_1>0$ such that for every $\lambda>\bar \lambda$ and every $Q\le Q_1(\lambda-\bar \lambda)$, balls are the only minimizers of \eqref{problem2d-gen}.
  \item There exists $Q_2>0$ such that for every $\lambda\in(0, \bar \lambda]$ and every $Q\le Q_2 \lambda^{\frac{3+\alpha}{2}}$, centered annuli are the only minimizers of \eqref{problem2d-gen}.
  \item For every $\alpha\in(1,2)$ there exists $Q_3(\alpha)$ such that for every $\lambda\ge 0$ and every $Q\ge Q_3(\alpha)(\lambda+\lambda^{\frac{\alpha-1}{2}})$, no minimizer exists for \eqref{problem2d-gen}.
 \end{itemize}
\end{theorem}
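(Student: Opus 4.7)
For \textbf{(i)}, the starting point is that $B_1$ is a competitor, so any minimizer $E$ of \eqref{problem2d-gen} satisfies $\F_{\lambda,Q}(E)\le \F_{\lambda,Q}(B_1)$, and combined with the Riesz rearrangement inequality $V_\alpha(E)\le V_\alpha(B_1)$ this gives
\begin{equation*}
 \F_{\lambda,0}(E)-\F_{\lambda,0}(B_1)\,\le\, Q\bigl(V_\alpha(B_1)-V_\alpha(E)\bigr).
\end{equation*}
The previous stability theorem bounds the left-hand side from below by $c_2(\lambda-\bar\lambda)\min_x|E\Delta B_1(x)|^2$, while the right-hand side is controlled by $CQ\min_x|E\Delta B_1(x)|$ thanks to the boundedness and Lipschitz regularity of the Riesz potential of $B_1$, as in \cite{KnMu13}. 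This forces $E$ to be $L^1$-close to a translate of $B_1$ as soon as $Q/(\lambda-\bar\lambda)$ is small. Standard regularity then lets me reduce to a nearly spherical set $E=\{r=1+h(\omega)\}$ and carry out a Fuglede-type second order expansion in the spirit of the proof of Theorem \ref{theo:introstabball}: the quadratic form of $\lambda P+W$ at $B_1$ is strictly positive definite modulo translations with smallest eigenvalue bounded below by $c(\lambda-\bar\lambda)$, whereas the quadratic form of $V_\alpha$ has bounded coefficient. Choosing $Q_1$ smaller than the ratio of these constants rules out any nontrivial perturbation of $B_1$, and hence $E$ must coincide with a translate of $B_1$.

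For \textbf{(ii)}, I would run the same scheme but around the optimal annulus $A^*_\lambda$ provided by Theorem \ref{theo:minlambda}, together with the quantitative stability estimate around annuli contained in the previous theorem. The peculiar threshold $Q\le Q_2\lambda^{(3+\alpha)/2}$ then reflects the natural scaling of $A^*_\lambda$ as $\lambda\downarrow 0$: its radii behave like $\lambda^{-1/2}$, so the $Q=0$ stability deficit scales like $\lambda^{1/2}\min_\Omega|E\Delta\Omega|^2$ while the Riesz perturbation $Q(V_\alpha(A^*_\lambda)-V_\alpha(E))$ grows at most like $Q\lambda^{-(2+\alpha)/2}\min_\Omega|E\Delta\Omega|$; balancing these two contributions forces exactly $Q\ll\lambda^{(3+\alpha)/2}$. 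The technical obstacle here is to track the dependence of the stability constant on $\lambda$ in the degenerate regime $\lambda\downarrow 0$, since the statement in the previous theorem is only qualitative for $\lambda\in[\lambda_*,\bar\lambda]$.

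For \textbf{(iii)}, I would argue by contradiction through a splitting-and-translation competitor. Assuming a minimizer $E$ existed, I would compare it to two scaled balls of volume $|B_1|/2$ placed at distance $R\to\infty$; such a competitor has $\lambda P+W$ bounded by $C(1+\lambda)$, while its Riesz energy tends to $2^{1-(2+\alpha)/2}V_\alpha(B_1)$, strictly smaller than $V_\alpha(E)\le V_\alpha(B_1)$, which already gives non-existence once $Q\gtrsim 1+\lambda$. To reach the sharper threshold $Q\ge Q_3(\alpha)(\lambda+\lambda^{(\alpha-1)/2})$ one should optimize the competitor: for small $\lambda$, replacing the two balls by a very large annulus of radius $\sim\lambda^{-1/2}$ (whose perimeter and elastica remain under control by the scaling of Theorem \ref{theo:minlambda}) produces a Riesz gain of order $\lambda^{-\alpha/2}$, leading exactly to the $\lambda^{(\alpha-1)/2}$ contribution. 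The assumption $\alpha\in(1,2)$ is used to ensure that the cross-interaction between distant pieces decays fast enough for the splitting to strictly decrease the energy, and the main obstacle is to keep the bending energy of the competitor under control uniformly over the whole range of $\lambda$, where the optimal competitor changes qualitatively from two small balls (large $\lambda$) to a thin annulus (small $\lambda$).
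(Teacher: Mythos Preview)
Your approach to (i) would work but is unnecessarily heavy, and it leans on a step (``standard regularity then lets me reduce to a nearly spherical set'') that is not available here without further work. The paper bypasses all of this by a one-line decomposition: for $\lambda>\bar\lambda$ one writes
\[
\F_{\lambda,Q}(E)=(\lambda-\bar\lambda)\Big(P(E)+\tfrac{Q}{\lambda-\bar\lambda}V_\alpha(E)\Big)+\F_{\bar\lambda,0}(E),
\]
and observes that balls minimize each bracket separately (the first by \cite{KnMu13,F2M3} when $\tfrac{Q}{\lambda-\bar\lambda}\le Q_1$, the second by Theorem~\ref{theo:minlambda}). No stability, no Fuglede expansion, no regularity is needed.

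For (ii) you correctly identify the scaling obstacle but do not resolve it. The paper handles it precisely by the rescaling $\widehat E=\lambda^{1/2}E$, which turns the problem into $\F_{1,\widetilde Q}$ with $\widetilde Q=Q\lambda^{-(3+\alpha)/2}$ on sets of volume $\lambda\pi$; this is why the threshold $Q\lesssim\lambda^{(3+\alpha)/2}$ appears. More importantly, the paper does \emph{not} proceed via a quadratic Fuglede expansion around the annulus. Instead it shows (via the uncharged stability results) that any low-energy competitor is $F\setminus G$ with $F,G$ simply connected and close to balls, and then uses the Riesz rearrangement inequality to decouple:
\[
V_\alpha(B_R\setminus B_r)-V_\alpha(F\setminus G)\le \big(V_\alpha(B_R)-V_\alpha(F)\big)+\big(V_\alpha(B_r)-V_\alpha(G)\big),
\]
so that one is reduced to two independent problems of the form $P+\tfrac{Q}{\lambda}V_\alpha$ on simply connected sets, for which \cite{KnMu13} applies directly. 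Your outline misses this decoupling step, which is the actual mechanism.

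Part (iii) contains a genuine error. You claim that the two-ball competitor has Riesz energy ``strictly smaller than $V_\alpha(E)\le V_\alpha(B_1)$'', but there is no reason for a putative minimizer $E$ to satisfy $V_\alpha(E)\ge 2^{-(\alpha+2)/2+1}V_\alpha(B_1)$: a long thin connected set can have arbitrarily small Riesz energy. What the argument needs, and what you are missing, is a \emph{lower bound} on the full energy of any connected set. The paper obtains this from the observation that a minimizer must be connected, together with the inequalities $P(E)\ge 2\,\diam(E)$, $\diam(E)\,W(E)\ge 4\pi$ and $V_\alpha(E)\ge \diam(E)^{-(2-\alpha)}|E|^2$, which yield
\[
\F_{\lambda,Q}(E)\gtrsim \lambda^{\frac{2-\alpha}{3-\alpha}}Q^{\frac{1}{3-\alpha}}.
\]
This lower bound is then played against the energy of a disconnected competitor made of $N$ thin annuli (not two balls), whose radii and number are optimized separately in the regimes $\lambda\ge 1$ and $\lambda\le 1$; the restriction $\alpha\in(1,2)$ enters in the estimate of $V_\alpha$ of a thin annulus (Lemma~\ref{lem:annulus}), not in any cross-interaction decay. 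Without the connectedness lower bound your comparison argument cannot close.
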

The first part of the theorem is a direct consequence of the minimality of the ball for $\F_{\bar \lambda,0}$ and for $P+QV_\alpha$ for $Q$ small enough. 
The second point regarding the minimality of centered annuli is the most delicate part of the theorem. It requires first to argue that sets of small energy are almost annuli and
then to use the stability of annuli. The last part of the theorem regarding non-existence is obtained by noticing that if a minimizer exists then we can obtain a lower bound on the energy which is not compatible for large
$Q$ with  an upper bound obtained by constructing a suitable competitor. \\

We conclude the paper by studying the three dimensional case where 
a characterization of the energy landscape is even more difficult. As already pointed out, the Willmore energy is invariant under rescaling and is globally minimized by balls \cite{Will65,Simo83}.
We can thus focus on the case $\lambda=0$ where we have competition 
 between the Willmore energy and the Riesz interaction energy. Stability estimates for the Willmore energy have been obtained by De~Lellis and Müller \cite{DeMu05}. Building on
 these, on the control of the isoperimetric deficit by the Willmore deficit obtained in \cite{RoeSc12} and a bound on the perimeter (see Proposition \ref{prop:bounds}), we obtain that balls are minimizers of \eqref{problem2d-gen} for small $Q$.
 \begin{theorem}\label{theo:introball}
  For  $d=3$ and $\lambda=0$, there exists $Q_4>0$ such that for every $Q\le Q_4$, the only minimizers of \eqref{problem2d-gen} are balls.
 \end{theorem}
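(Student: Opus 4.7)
The plan is to adapt the strategy behind Theorem \ref{prop:stab_intro} to three dimensions: combine quantitative Willmore stability at the ball with the fact that $B_1$ maximizes $V_\alpha$ at fixed volume, and close via a Fuglede-type expansion at the sphere. First I would establish existence of a minimizer for $\F_{0,Q}$ and show any such minimizer is $L^1$-close to a ball: for $E\in\M(|B_1|)$ with $\F_{0,Q}(E)\le \F_{0,Q}(B_1)=W(B_1)+QV_\alpha(B_1)$, positivity of $V_\alpha$ gives $W(E)-W(B_1)\le QV_\alpha(B_1)$, while Proposition \ref{prop:bounds} supplies uniform area and diameter bounds, and thereby $W^{2,2}$-compactness of minimizing sequences modulo translations. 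Combining the Röger--Schätzle inequality \cite{RoeSc12} with the Fusco--Maggi--Pratelli quantitative isoperimetric inequality upgrades the bound on the Willmore deficit to
\[
 \min_{x\in\R^3}|E\Delta B_1(x)|\,\le\, C\sqrt{Q}.
\]

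Next I would argue by contradiction: suppose there exist $Q_n\to 0$ and minimizers $E_n\ne B_1$ (modulo translation). The above shows $|E_n\Delta B_1(x_n)|\to 0$; combining this with the De~Lellis--Müller stability theorem \cite{DeMu05} and the area bounds of Proposition \ref{prop:bounds}, after translation $\partial E_n$ can be written as a normal graph over $\partial B_1$ with height function $\phi_n\to 0$ in $W^{2,2}(\partial B_1)$. Using the volume constraint together with the translation invariance of the problem, I may further arrange $\phi_n\perp\{1,x_1,x_2,x_3\}$ in $L^2(\partial B_1)$, i.e.\ $\phi_n$ lies in the span of the spherical harmonics of degree $\ge 2$.

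A second-order Taylor expansion of the two functionals along the graph parametrization then yields
\begin{align*}
 W(E_n)-W(B_1) &= \mathcal{Q}_W(\phi_n)+o\bigl(\|\phi_n\|_{H^2}^2\bigr),\\
 V_\alpha(E_n)-V_\alpha(B_1) &= -\mathcal{Q}_V(\phi_n)+o\bigl(\|\phi_n\|_{L^2}^2\bigr),
\end{align*}
where $\mathcal{Q}_W(\phi)\ge c_W\|\phi\|_{H^2}^2$ on the admissible subspace of the previous step (Fuglede coercivity of the Willmore second variation on the round sphere) and $0\le \mathcal{Q}_V(\phi)\le C_V\|\phi\|_{L^2}^2$ (the lower bound from maximality of $B_1$, the upper bound from the smoothness of the Riesz kernel away from the diagonal for bounded sets). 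Inserting into the minimality inequality $\F_{0,Q_n}(E_n)\le \F_{0,Q_n}(B_1)$ produces
\[
 \bigl(c_W-Q_n C_V-o(1)\bigr)\,\|\phi_n\|_{H^2}^2\,\le\, 0,
\]
which for $n$ large forces $\phi_n\equiv 0$, contradicting $E_n\ne B_1$ and yielding the sought $Q_4>0$.

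The main obstacle is the quantitative step that converts smallness of the Willmore deficit into a small-norm normal graph: the De~Lellis--Müller theorem provides closeness to a round sphere in $W^{2,2}$ only via a conformal parametrization, and reparametrizing this as a genuine normal graph over $\partial B_1$ with quantitatively controlled $W^{2,2}$-norm requires combining it with the area and diameter bounds of Proposition \ref{prop:bounds} and a careful choice of the ball center. The second-variation calculations in the final step are standard but technically delicate for the non-local term $V_\alpha$, demanding uniform control of the potential $v_{E_n}$ and its derivatives as $E_n$ ranges over sets close to $B_1$.
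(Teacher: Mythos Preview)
Your approach is genuinely different from the paper's, and the obstacle you flag at the end is precisely the reason the paper does \emph{not} follow your route.

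The paper argues directly rather than by contradiction: for any $E\in\M(|B_1|)$ with $\F_{0,Q}(E)\le\F_{0,Q}(B_1)$ it shows $E=B_1$. The key difference is that it never converts the De~Lellis--M\"uller conformal parametrization into a normal graph and never performs a second-variation expansion. It uses De~Lellis--M\"uller only through its Sobolev-embedded $C^0$ consequence $\omega:=\|\Psi-\id\|_{C^0}\les (W(E)-4\pi)^{1/2}$ (with $\Psi:\partial B_r\to\R^3$, $\Ha^2(\partial B_r)=\Ha^2(\partial E)$), together with the R\"oger--Sch\"atzle bound $r-1\les W(E)-4\pi$. These give $B_{1-\omega}\subset E\subset B_{r+\omega}$, and then a potential-theoretic estimate from \cite{KnMu14}, exploiting the Lipschitz continuity of the ball's Riesz potential $v_\alpha$, yields
\[
 V_\alpha(B_1)-V_\alpha(E)\,\les\,\int_{E\Delta B_1}\bigl||x|-1\bigr|\,dx\,\les\,(r-1+\omega)^2\,\les\,Q\bigl(V_\alpha(B_1)-V_\alpha(E)\bigr),
\]
forcing $V_\alpha(E)=V_\alpha(B_1)$, hence $E=B_1$, for $Q$ small. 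This argument is restricted to $\alpha\in(1,3)$ because the Lipschitz bound on $v_\alpha$ fails for $\alpha\le 1$.

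Your Fuglede route, if it could be completed, would in principle cover all $\alpha\in(0,3)$; the paper's remark after the proof says essentially this. But the normal-graph reparametrization is a genuine gap, not a routine technicality: the De~Lellis--M\"uller map is conformal, and extracting from it a normal height function $\phi$ over $\partial B_1$ with quantitative $W^{2,2}$ control is not provided by \cite{DeMu05} (the area and diameter bounds do not supply it either). The paper explicitly flags this as the obstruction to carrying out a Taylor expansion of $V_\alpha$ in the style of \cite{F2M3}. A further point you should address: since $\partial B_1$ is two-dimensional, $W^{2,2}$ does not embed into $C^1$, so justifying the Willmore remainder $o(\|\phi\|_{H^2}^2)$ with only $W^{2,2}$-smallness of $\phi$ is itself delicate.
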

Of course, since balls are also minimizers of the isoperimetric problem, a direct consequence of Theorem \ref{theo:introball} is the minimality of the balls for \eqref{problem2d-gen} for every $\lambda\ge0$ and every $Q\le Q_4$.
 For the case $\lambda=0$ we are not able to prove or disprove a non-existence regime in the parameter space. Still,  we  show that if a minimizer exists for every $Q$ then its  isoperimetric quotient must degenerate as $Q\to \infty$ (see Proposition \ref{prop:min-large-m}).
  This is somewhat reminiscent of earlier results obtained by Schygulla \cite{Schy12}. 
  
 Finally, we obtain a non-existence result in the case $\lambda>0$ and $\alpha\in (2,3)$ in the regime of sufficiently large charge.
\begin{proposition}\label{prop:nonexistence-3D-intro}
 For every $\alpha\in (2,3)$, there exists $Q_5(\alpha)$ such that for every $\lambda, Q$ with $Q\ge Q_5(\lambda^{-\frac{3-\alpha}{2}}+\lambda^{\frac{3+\alpha}{2}})$, 
  no minimizer of
 $\F_{\lambda,Q}$ in $\M(|B_1|)$ exists. 
\end{proposition}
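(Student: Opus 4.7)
The plan is to argue by contradiction, in the spirit of the non-existence part of the two-dimensional case discussed above: suppose $E\in\M(|B_1|)$ is a minimizer of $\F_{\lambda,Q}$ and derive an incompatible pair of upper and lower bounds for $\F_{\lambda,Q}(E)$. For the upper bound, for any partition $|B_1|=v_0+v_1$ with $v_0\ge v_1>0$ and corresponding radii $r_i=(3v_i/(4\pi))^{1/3}$, take two disjoint balls of volumes $v_0,v_1$ at mutual distance $R$; this is an admissible configuration in $\M(|B_1|)$. Since $\alpha<3$, the cross Riesz term tends to $0$ as $R\to\infty$, so minimality of $E$ gives
\begin{equation}\label{eq:pn-up}
\F_{\lambda,Q}(E)\,\le\, 4\pi\lambda(r_0^2+r_1^2)+8\pi+Q\,V_\alpha(B_1)\,(r_0^{3+\alpha}+r_1^{3+\alpha}).
\end{equation}
For the lower bound, the isoperimetric inequality gives $P(E)\ge 4\pi$ and the Willmore inequality in $\R^3$ gives $W(E)\ge 4\pi$. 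Testing minimality against the ball bounds $W(E)$ and $\lambda P(E)$ from above in terms of $\lambda$ and $Q$. Applying Proposition~\ref{prop:bounds} then yields a diameter bound $\diam(E)\le D(\lambda,Q)$, and combined with the elementary inequality $V_\alpha(E)\ge |B_1|^2/\diam(E)^{3-\alpha}$ this produces
\begin{equation}\label{eq:pn-lo}
\F_{\lambda,Q}(E)\,\ge\, 4\pi(1+\lambda)+\frac{|B_1|^{2}\,Q}{D(\lambda,Q)^{3-\alpha}}.
\end{equation}

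It remains to choose the partition $v_1$ in \eqref{eq:pn-up} so that it contradicts \eqref{eq:pn-lo}. Two regimes of $\lambda$ are responsible for the two summands in the stated threshold. For $\lambda$ large, the natural scale invariance of the functional under dilations ($W$ is dimensionless while $P$ and $V_\alpha$ scale as $\ell^{2}$ and $\ell^{3+\alpha}$) makes $Q\lambda^{-(3+\alpha)/2}$ the relevant dimensionless coupling; the two-equal-balls choice $v_0=v_1=|B_1|/2$ in \eqref{eq:pn-up} produces a contradiction once $Q\ges \lambda^{(3+\alpha)/2}$. For $\lambda$ small, the binding obstruction is instead the fixed Willmore surplus of creating a second component (the $8\pi$ in \eqref{eq:pn-up} versus the $4\pi$ in \eqref{eq:pn-lo}); a small split $v_1\ll |B_1|$, tuned against the Riesz gain modulated by the diameter bound $D(\lambda,Q)$, yields the complementary condition $Q\ges \lambda^{-(3-\alpha)/2}$. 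Above the sum of the two thresholds, \eqref{eq:pn-lo} strictly exceeds \eqref{eq:pn-up}, contradicting the assumed minimality.

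The delicate point, and the main obstacle, lies in the quantitative tracking of the diameter bound $D(\lambda,Q)$ coming from Proposition~\ref{prop:bounds} through the Riesz lower bound \eqref{eq:pn-lo}. The restriction $\alpha\in(2,3)$ enters precisely in this balance: the upper bound \eqref{eq:pn-up} requires $\alpha<3$ (integrability of the kernel at infinity), while the lower bound exploits $\alpha>2$ so that the kernel decays slowly enough for the diameter-based bound on $V_\alpha$ to survive the perimeter/Willmore scaling and to match the creation cost of a new component in both regimes of $\lambda$.
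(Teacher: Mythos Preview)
Your argument has two genuine gaps that make the approach unworkable as stated.

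\textbf{The lower bound.} You invoke Proposition~\ref{prop:bounds} to obtain a diameter bound $D(\lambda,Q)$, but that proposition requires $\W(E)\le 8\pi-\delta$. Testing minimality against the ball only gives $\W(E)\le 4\pi+4\pi\lambda+QV_\alpha(B_1)$, which for large $Q$ is far above $8\pi$; there is no reason a minimizer should satisfy the hypothesis of Proposition~\ref{prop:bounds}. The paper avoids this by using Simon's unconditional estimate $\diam(E)\les \sqrt{P(E)\,\W(E)}$ (valid for any closed surface), so that
\[
\F_{\lambda,Q}(E)\,\ges\,\sqrt{\lambda}\,\diam(E)+\frac{Q}{\diam(E)^{3-\alpha}}
\,\ges\,\lambda^{\frac{3-\alpha}{2(4-\alpha)}}Q^{\frac{1}{4-\alpha}}.
\]
Here the diameter is tied directly to the energy terms rather than to an a priori bound.

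\textbf{The upper bound.} A competitor made of two (or finitely many) balls always has Riesz energy of order $Q$: for any split $v_0+v_1=|B_1|$ with radii $r_i$, the term $Q\,V_\alpha(B_1)(r_0^{3+\alpha}+r_1^{3+\alpha})$ is bounded below by a fixed positive multiple of $Q$. Hence your upper bound \eqref{eq:pn-up} grows linearly in $Q$, while your lower bound \eqref{eq:pn-lo} contains $Q/D^{3-\alpha}$ with $D$ potentially large; the upper bound never drops below the lower bound, so no contradiction arises. The paper instead uses $N$ far-apart thin spherical annuli of inner radius $R$, with $R=\lambda^{-1/2}$ and $N\sim \lambda^{(3-\alpha)/4}Q^{1/2}\to\infty$. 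This competitor has Riesz energy $\sim Q/(NR^{3-\alpha})$, which is driven to zero, and total energy $\sim \lambda^{(3-\alpha)/4}Q^{1/2}$. Since $\alpha>2$ forces $\tfrac{1}{4-\alpha}>\tfrac12$, the lower bound $Q^{1/(4-\alpha)}$ eventually exceeds this upper bound, yielding the contradiction. The two constraints $N\gg 1$ and $N^{-1}\ll R^3$ translate exactly into the two thresholds $Q\gg\lambda^{-(3-\alpha)/2}$ and $Q\gg\lambda^{(3+\alpha)/2}$.
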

\subsection*{Outline and notation}
In Section \ref{sec:uncharged} we first consider the planar case $d=2$ in the absence of charge ($Q=0$) before considering the case $Q>0$ in Section \ref{sec:planarcharged}.
 In the last section we finally consider the three dimensional case. For the reader's convenience, the main theorems given in the introduction are restated in the respective sections and some of them have been extended by more detailed statements. Theorem \ref{theo:introstabball}, Theorem \ref{theo:minlambda}, and Theorem \ref{thm:stab_intro_Qzero} correspond to Theorem \ref{thm:quantWill}, Theorem \ref{theo:minFlambda} and Theorem \ref{thm:stab}, respectively. Theorem \ref{thm:intro_ball_min_sc_Qpos} corresponds to Proposition \ref{prop:simply_connected}, the statements in Theorem \ref{thm:intro_min_Qpos} are collected from Proposition \ref{prop:stabball}, Theorem \ref{theo:stabannulus}, Proposition \ref{prop:stabannulus2} and Proposition \ref{prop:non-existence}. Theorem \ref{theo:introball} corresponds to Theorem \ref{teomin}, and Proposition \ref{prop:nonexistence-3D-intro} to Proposition \ref{prop:nonexistence-3D}.

For two real numbers $A,B$ the notation $A\ges B$ means that $A\geq cB$ for some $c>0$ that is universal (unless dependencies are explicitly stated). 
Correspondingly we use the notation $\les$ and write $A\sim B$ if $A\les B$ and $A\ges B$. 

%
\section{The planar case: uncharged drops}\label{sec:uncharged}
We start by investigating the planar case $d=2$ in the absence of charges i.e.~$Q=0$. Our aim is both to characterize the minimizers and to show that the energy controls the distance to these minimizers. 
\subsection{Simply connected sets: controlling the asymmetry index by the elastica deficit}
We first restrict ourselves to simply connected sets. By \cite{BucurHenrot, FeKN16} balls are the unique minimizers of the elastica energy among simply connected sets with prescribed volume. Since they also minimize the perimeter, balls are the unique minimizers  of $\F_\lambda=\F_{\lambda,0}$ in this class. 
Using the quantitative isoperimetric inequality \cite{FuMaPra}, one could directly obtain a quantitative inequality for $\F_\lambda$ which would however degenerate as $\lambda\to 0$. Our aim here is to show that actually the elastica energy
$W(E)$ itself controls the distance to balls. This is a quantitative version of \cite{BucurHenrot, FeKN16} which could be of independent interest.

Inspired by a strategy first used in \cite{AcFuMo,CicLeo}  (see for instance also \cite{BraDeVe,CicLeoMa,GolNoRuf} for a few other applications)  which was building on ideas from \cite{fuglede}, we first restrict ourselves to nearly spherical sets. 
More precisely, we consider  sets $E$ such that $\partial E$ is a graph over $\partial B_R$ for some $R>0$, i.e.
\begin{equation}
	\partial E = \{R(1+\phi(\theta))e^{i\theta} \ : \ \theta\in[0,2\pi)\}, \label{eq:nearly-spherical}
\end{equation} 
with $\|\phi\|_{W^{2,2}}\ll 1$. We will need the following estimate on the elastica energy of nearly spherical sets.

\begin{lemma}
 Let $R>0$ and  $E$ be a nearly spherical set. Then,
 \begin{equation}\label{quantWillspherprep}
  W(E)-W(B_R)= R^{-1}\int_0^{2\pi}\Big(\ddot{\phi}^2+\phi^2+\frac{3}{2}\dot{\phi}^2 -\phi+4\phi \ddot{\phi}\Big) + o(\|\phi\|^2_{W^{2,2}}).
 \end{equation}
 Moreover, if $|E|=|B_R|$ and the barycenter of $E$ is equal to zero, then
 \begin{equation}\label{quantWillspher}
  W(E)-W(B_R)\ges R^{-1}\,\int_0^{2\pi} \Big(\ddot{\phi}^2+\dot{\phi}^2+\phi^2\Big).
 \end{equation}
\end{lemma}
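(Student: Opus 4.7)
The plan is to compute $W(E)$ explicitly in polar coordinates, Taylor expand in $\phi$ up to second order, and then use Fourier analysis together with the volume and barycenter constraints to get the coercive lower bound, in the spirit of Fuglede's classical argument.

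First I parametrize $\partial E$ by $r(\theta)=R(1+\phi(\theta))$. The curvature is
\[
	H(\theta)\,=\,\frac{r^2+2\dot r^2-r\ddot r}{(r^2+\dot r^2)^{3/2}},\qquad ds\,=\,\sqrt{r^2+\dot r^2}\,d\theta,
\]
so that
\[
	W(E)\,=\,\int_0^{2\pi}\frac{\bigl((1+\phi)^2+2\dot\phi^2-(1+\phi)\ddot\phi\bigr)^2}{\bigl((1+\phi)^2+\dot\phi^2\bigr)^{5/2}}\,\frac{d\theta}{R}.
\]
Writing numerator and denominator in the form $1+A$ with $A$ of order $\|\phi\|_{W^{2,2}}$, I would expand both to second order via the binomial series $(1+u)^{-5/2}=1-\tfrac{5}{2}u+\tfrac{35}{8}u^2+\cdots$. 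The zeroth order gives $1$ (hence $W(B_R)=2\pi/R$), and the first-order contribution $-\phi-2\ddot\phi$ loses its derivative term after integration by periodicity. A careful bookkeeping of the quadratic terms yields exactly
\[
	W(E)-W(B_R)\,=\,R^{-1}\int_0^{2\pi}\bigl(\ddot\phi^2+\phi^2+\tfrac{3}{2}\dot\phi^2-\phi+4\phi\ddot\phi\bigr)\,d\theta\,+\,o(\|\phi\|_{W^{2,2}}^2),
\]
which is \eqref{quantWillspherprep}.

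Next I translate the two constraints into linear conditions on the Fourier modes of $\phi$. From $|E|=\tfrac{1}{2}\int r^2\,d\theta=|B_R|$ I get $\int_0^{2\pi}\phi=-\tfrac{1}{2}\int_0^{2\pi}\phi^2$, which handles the troublesome first-order term via
\[
	-\int\phi\,=\,\tfrac{1}{2}\int\phi^2\,+\,o(\|\phi\|_{W^{2,2}}^2).
\]
Similarly, expanding $\int_E x\,dx=\tfrac{R^3}{3}\int\cos\theta(1+\phi)^3\,d\theta$ and the analogous expression with $\sin\theta$, the barycenter condition forces $\int\phi\cos\theta$ and $\int\phi\sin\theta$ to be $O(\|\phi\|^2)$. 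After integrating the cross term $4\int\phi\ddot\phi=-4\int\dot\phi^2$ by parts, the right-hand side of \eqref{quantWillspherprep} becomes
\[
	R^{-1}\int_0^{2\pi}\Bigl(\ddot\phi^2-\tfrac{5}{2}\dot\phi^2+\tfrac{3}{2}\phi^2\Bigr)\,d\theta\,+\,o(\|\phi\|_{W^{2,2}}^2).
\]

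Finally, writing $\phi=\sum_{k\in\Z}a_k e^{ik\theta}$, the displayed integral equals $2\pi\sum_k\bigl(k^4-\tfrac{5}{2}k^2+\tfrac{3}{2}\bigr)|a_k|^2$, while $\|\phi\|_{W^{2,2}}^2\sim\sum_k(1+k^2+k^4)|a_k|^2$. The crucial observation is that the symbol $P(k):=k^4-\tfrac{5}{2}k^2+\tfrac{3}{2}$ vanishes precisely at $k=\pm 1$ (reflecting the translation invariance of $W$) and is comparable to $1+k^2+k^4$ for $|k|\geq 2$; the volume and barycenter constraints exactly kill the modes $k=0,\pm 1$ up to $o(\|\phi\|_{W^{2,2}}^2)$ errors. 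Splitting the Fourier sum into $|k|\leq 1$ and $|k|\geq 2$ and absorbing the former into the remainder then gives \eqref{quantWillspher}. The only delicate step is the second-order Taylor expansion, where one must be careful to discard only terms that are genuinely of order $o(\|\phi\|_{W^{2,2}}^2)$ in the $W^{2,2}$ norm; once the expansion is in place, the rest is standard Fourier coercivity.
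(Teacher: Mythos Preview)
Your proposal is correct and follows essentially the same route as the paper: compute the elastica integrand in polar coordinates, Taylor expand to second order to obtain \eqref{quantWillspherprep}, use the volume constraint to convert the linear term $-\int\phi$ into $\tfrac12\int\phi^2$, and then apply a Fuglede-type Fourier argument, observing that the symbol $k^4-\tfrac52 k^2+\tfrac32$ vanishes only at $|k|=1$ and that the constraints force $|a_0|+|a_{\pm 1}|\lesssim\sum_{|k|\ge 2}|a_k|^2$. The only cosmetic difference is that you integrate $4\int\phi\ddot\phi$ by parts before passing to Fourier, whereas the paper computes the same quadratic form directly via Parseval; this makes no substantive difference.
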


\begin{proof}
 By scaling, it is enough to prove \eqref{quantWillspherprep}, \eqref{quantWillspher} for $R=1$. 
The elastica energy of $E$ is given by
\begin{equation*}
   \int_{\partial E} H^2=\int_{0}^{2\pi} \frac{(2\dot{\phi}^2+(1+\phi)^2-(1+\phi)\ddot{\phi})^2}{(\dot{\phi}^2+(1+\phi)^2)^{5/2}}.
\end{equation*}
Let us now compute the Taylor expansion of the integrand.  Keeping only up to quadratic terms, we get that on the one hand,
\begin{equation*}
   (2\dot{\phi}^2+(1+\phi)^2-(1+\phi)\ddot{\phi})^2=1+4\phi -2\ddot{\phi}+6\phi^2+\ddot{\phi}^2+4\dot{\phi}^2-6\phi \ddot{\phi} +(\phi^2+\dot{\phi}^2+ \ddot{\phi}^2)O(|\phi|+|\dot{\phi}|),
\end{equation*}
and on the other hand,
\begin{equation*}
   (\dot{\phi}^2+(1+\phi)^2)^{-5/2}=1-5\phi-\frac{5}{2}\dot{\phi}^2+15 \phi^2 +o(\phi^2+\dot{\phi}^2),
\end{equation*}
so that 
\begin{equation*}
  \frac{(2\dot{\phi}^2+(1+\phi)^2-(1+\phi)\ddot{\phi})^2}{(\dot{\phi}^2+(1+\phi)^2)^{5/2}}= 1-\phi-2\ddot{\phi}+\ddot{\phi}^2+\phi^2+\frac{3}{2}\dot{\phi}^2+4\phi \ddot{\phi} +(\phi^2+\dot{\phi}^2+ \ddot{\phi}^2)O(|\phi|+|\dot{\phi}|).
\end{equation*}
Using that  $\int_0^{2\pi} \ddot{\phi}=0$, we obtain \eqref{quantWillspherprep}.\\
If now $|E|=|B_1|$, using that $2|E|=\int_{\partial E}x\cdot \nu(x)\,d\Ha^1(x)$, we have
\begin{equation}\label{volumeconstraint}
	\int_{0}^{2\pi} \Big(\phi+\frac{\phi^2}{2}\Big)=0,
\end{equation}
while if the barycenter $\bar x$ is zero, using that $3|E| \bar x=\int_{\partial E}x\cdot\nu(x) x\,d\Ha^1(x)$,
 \begin{equation}\label{barycenter}
 \int_{0}^{2\pi} [(1+\phi)^3-1] e^{i\theta}=0.
\end{equation}
Using \eqref{volumeconstraint} in \eqref{quantWillspherprep}, we get 
\begin{equation}
   W(E) - W(B_1) = \int_{0}^{2\pi} \Big(\ddot{\phi}^2+\frac{3}{2}\phi^2+\frac{3}{2}\dot{\phi}^2+4\phi \ddot{\phi}\Big)+o(\|\phi\|^2_{W^{2,2}}). \label{eq:12a}
\end{equation}
If $\phi=\sum_{k\in \Z} a_{k} e^{ik\theta}$ is the Fourier representation of $\phi$, then \eqref{volumeconstraint} and \eqref{barycenter} imply that
\[
 |a_0|+|a_{\pm 1}|\les \sum_{|k|\ge 2} |a_k|^2
\]
and thus for every $j\in \N$,
\begin{equation*}
   \left\|\frac{d^j\phi}{d\theta^j}\rt\|_{L^2}^2\leq C \sum_{|k|\ge 2} |k|^{2j} |a_k|^2.
\end{equation*}
Since by Parseval's identity,
\begin{equation*}
   \int_{0}^{2\pi} \Big(\ddot{\phi}^2+\frac{3}{2}\phi^2+\frac{3}{2}\dot{\phi}^2+4\phi \ddot{\phi}\Big) = 2\pi\sum_{k\in \Z} \Big(|k|^4-\frac{5}{2}|k|^2+\frac{3}{2}\Big)|a_k|^2,
\end{equation*}
and since for $k\in \Z$, the polynomial $k^4-\frac{5}{2}k^2+\frac{3}{2}$ is always non-negative and vanishes only for $|k|=1$, we have for all $\phi$ with $\|\phi\|_{W^{2,2}}$ sufficiently small
\begin{align*}
 W(E)-W(B_1)&\ges \sum_{|k|\ge 2} \Big(|k|^4-\frac{5}{2}|k|^2+\frac{3}{2}\Big)|a_k|^2\\
 &\ges \int_{0}^{2\pi} \Big(|\ddot{\phi}|^2+|\dot{\phi}|^2+|\phi|^2\Big),
\end{align*}
concluding the proof of \eqref{quantWillspher}.
\end{proof}

We also recall the following Taylor expansion of the perimeter for nearly spherical sets (see \cite{fuglede}).

\begin{lemma}
 Let $R>0$ and let $E$ be a nearly spherical set with $\partial E$ represented as in \eqref{eq:nearly-spherical}, then
 \begin{equation}\label{quantpernearlyspher}
  P(E)-P(B_R)=R\int_0^{2\pi}  \Big(\phi +\frac{\dot{\phi}^2}{2}\Big) +o(\|\phi\|_{W^{1,2}}^2).
 \end{equation}
\end{lemma}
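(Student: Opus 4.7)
The starting point is the polar graph parametrization: writing $\partial E = \{R(1+\phi(\theta))e^{i\theta}\}$, the arc-length element in polar coordinates gives
\[
 P(E) \;=\; R\int_0^{2\pi} \sqrt{(1+\phi(\theta))^2 + \dot\phi(\theta)^2}\, d\theta.
\]
So the task reduces to a pointwise Taylor expansion of the integrand $f(\phi,\dot\phi) := \sqrt{(1+\phi)^2+\dot\phi^2}$ near $(\phi,\dot\phi)=(0,0)$, followed by a careful treatment of the remainder.

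The plan for the expansion is to write $(1+\phi)^2 + \dot\phi^2 = 1 + 2\phi + \phi^2 + \dot\phi^2$ and apply $\sqrt{1+x} = 1 + \tfrac{x}{2} - \tfrac{x^2}{8} + O(x^3)$ with $x = 2\phi + \phi^2 + \dot\phi^2$. The linear contribution in $x$ gives $\phi + \tfrac{\phi^2}{2} + \tfrac{\dot\phi^2}{2}$, while the $-x^2/8$ term contributes $-\tfrac{(2\phi)^2}{8} = -\tfrac{\phi^2}{2}$ at quadratic order, so that the $\phi^2$ contributions cancel and one is left with
\[
 f(\phi,\dot\phi) = 1 + \phi + \tfrac{1}{2}\dot\phi^2 + r(\phi,\dot\phi),
\]
where $r$ is a sum of terms that are cubic or higher in $(\phi,\dot\phi)$. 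Integrating over $[0,2\pi]$ and subtracting $P(B_R)=2\pi R$ then yields the claimed leading order
\[
 P(E)-P(B_R) = R\int_0^{2\pi}\Big(\phi + \tfrac{1}{2}\dot\phi^2\Big)\,d\theta + R\int_0^{2\pi} r(\phi,\dot\phi)\,d\theta.
\]

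The only step that requires a bit of care is showing that the remainder is $o(\|\phi\|_{W^{1,2}}^2)$. Using the nearly-spherical hypothesis $\|\phi\|_{W^{2,2}}\ll 1$ together with the one-dimensional Sobolev embeddings $W^{1,2}(0,2\pi)\hookrightarrow L^\infty$ and $W^{2,2}(0,2\pi)\hookrightarrow W^{1,\infty}$, the typical cubic terms can be controlled as
\[
 \Big|\int \phi^3\Big| \leq \|\phi\|_\infty\,\|\phi\|_{L^2}^2 \lesssim \|\phi\|_{W^{1,2}}\,\|\phi\|_{W^{1,2}}^2,\qquad \Big|\int \phi\,\dot\phi^2\Big|\leq \|\phi\|_\infty\,\|\dot\phi\|_{L^2}^2 \lesssim \|\phi\|_{W^{1,2}}\,\|\phi\|_{W^{1,2}}^2,
\]
and analogously $\int \dot\phi^4\leq \|\dot\phi\|_\infty^2\,\|\dot\phi\|_{L^2}^2 \lesssim \|\phi\|_{W^{2,2}}^2\,\|\phi\|_{W^{1,2}}^2$. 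Each such term is $o(\|\phi\|_{W^{1,2}}^2)$ as $\|\phi\|_{W^{2,2}}\to 0$, and the same embedding-plus-Hölder argument handles the higher-order tail of $\sqrt{1+x}$. This gives the desired remainder bound and completes the proof. The only mildly delicate point—not really an obstacle—is ensuring uniformity of the Taylor remainder of $\sqrt{1+x}$, which is immediate once one observes that $\|\phi\|_{L^\infty}+\|\dot\phi\|_{L^\infty}$ is small so that $x$ stays in a neighborhood of $0$ on which $\sqrt{1+x}$ is smooth.
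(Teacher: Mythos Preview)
Your proof is correct. The paper does not actually supply its own argument for this lemma---it simply cites Fuglede---so there is nothing to compare against; your direct computation via the polar arc-length formula and the Taylor expansion of $\sqrt{1+x}$, with the remainder controlled by the one-dimensional embeddings $W^{1,2}\hookrightarrow L^\infty$ and $W^{2,2}\hookrightarrow W^{1,\infty}$, is exactly the standard elementary verification.
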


We now combine  estimate \eqref{quantWillspher} and the work of Bucur-Henrot \cite{BucurHenrot}, to obtain a  quantitative estimate on the elastica deficit. 
\begin{theorem}\label{thm:quantWill}
There exists a universal constant $c_0>0$ such that for every $R>0$ and every set $E\in\M_{sc}(|B_R|)$, 
\begin{equation}\label{quantWill}
	R\Big( W(E)-W(B_R)\Big)\,\ge\, 
 c_0 \lt(\min_{x\in \R^2} \frac{|E\Delta B_R(x)|}{|B_R|}\rt)^2.
\end{equation}
Furthermore, there exist $\delta_0>0$ and  $c_1 >0$ such that if $W(E)\leq R^{-1}(W(B_1)+\delta_0)$, then
\begin{equation}\label{quantWill2}
 R\Big(W(E)-W(B_R)\Big)\,\ge\, c_1 \frac{P(E)-P(B_R)}{P(B_R)}.
\end{equation}
\end{theorem}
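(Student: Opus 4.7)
My plan is to follow the Cicalese--Leonardi contradiction-compactness strategy, sketched just before the statement: reduce to the case of nearly spherical sets, then appeal to the Fuglede-type expansions \eqref{quantWillspher} and \eqref{quantpernearlyspher}.

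First I would reduce to $R=1$ by a scaling argument (the prefactor $R$ in \eqref{quantWill} and \eqref{quantWill2} is precisely chosen so that both sides scale the same way, and $W(B_R)=R^{-1}W(B_1)$). Suppose \eqref{quantWill} fails. Then there exist $E_n\in\M_{sc}(|B_1|)$ with
\[
W(E_n)-W(B_1)\,<\,\tfrac{1}{n}\,\alpha(E_n)^2,\qquad \alpha(E_n):=\min_{x\in\R^2}|E_n\Delta B_1(x)|.
\]
Because $\alpha(E_n)\le 2|B_1|$ is uniformly bounded, this forces $W(E_n)\to W(B_1)$, and by the simply-connected minimality of the disk (Bucur--Henrot) this is a vanishing-deficit sequence. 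After translating so that the minimum in the definition of $\alpha(E_n)$ is attained at the origin, I would invoke the refinement of \cite{BucurHenrot} announced by the authors as Lemma \ref{lem:quantWill2}: it promotes weak compactness of the boundary parametrizations (inherited from the $W^{2,2}$ bound on the curvature together with the length bound coming from $W(E_n)\les 1$) into the statement that, for $n$ large, $\partial E_n$ is the graph over $\partial B_1$ of a function $\phi_n$ with $\|\phi_n\|_{W^{2,2}}\to 0$.

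Once I am in the nearly spherical regime, an additional translation of order $\|\phi_n\|_{L^1}$ normalises the barycenter to zero without destroying the smallness of $\|\phi_n\|_{W^{2,2}}$. The quadratic estimate \eqref{quantWillspher} then gives
\[
W(E_n)-W(B_1)\,\ges\,\|\phi_n\|_{W^{2,2}}^2\,\ges\,\|\phi_n\|_{L^1}^2\,\ges\,|E_n\Delta B_1|^2\,\ges\,\alpha(E_n)^2,
\]
contradicting the failure assumption and proving \eqref{quantWill}.

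For \eqref{quantWill2} the very same compactness argument, applied to any $E$ with $W(E)\le W(B_1)+\delta_0$ for $\delta_0$ sufficiently small, shows that after translation $\partial E$ is the graph of some $\phi$ with $\|\phi\|_{W^{2,2}}$ as small as desired (otherwise I could extract a contradicting sequence). The perimeter expansion \eqref{quantpernearlyspher} together with the volume constraint \eqref{volumeconstraint} (which yields $\int_0^{2\pi}\phi=-\tfrac12\int_0^{2\pi}\phi^2+o(\|\phi\|_{W^{1,2}}^2)$) gives
\[
P(E)-P(B_1)=\tfrac12\int_0^{2\pi}(\dot\phi^2-\phi^2)+o(\|\phi\|_{W^{1,2}}^2)\,\les\,\|\phi\|_{W^{1,2}}^2,
\]
and combining this with \eqref{quantWillspher} applied to $E$ (after an additional barycenter-normalising translation, which does not change $P(E)$ or $W(E)$) yields \eqref{quantWill2}.

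The main obstacle, and the reason the paper devotes a separate lemma to it, is the reduction-to-graphs step: translating $W(E_n)\to W(B_1)$ (together with $|E_n|=|B_1|$ and simple connectedness) into $W^{2,2}$-small graphical representations over $\partial B_1$. Weak $W^{2,2}$-compactness of the arc-length parametrisations is standard, but one must exclude a priori degenerations such as long thin filaments, small handles attached by necks of vanishing area, or the boundary touching itself. Ruling these out sharply, not just proving Hausdorff convergence, is where a genuine refinement of \cite{BucurHenrot} is required; everything else reduces to the Fuglede-style algebra already carried out in the two lemmas preceding the theorem.
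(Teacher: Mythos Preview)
Your proposal is correct and follows essentially the same route as the paper: reduce to $R=1$, run a contradiction--compactness argument via Lemma~\ref{lem:quantWill2} to land in the nearly spherical regime, normalise the barycenter, and finish with the Fuglede-type expansions \eqref{quantWillspher} and \eqref{quantpernearlyspher}. You also correctly single out the reduction-to-graphs step (Lemma~\ref{lem:quantWill2}, supported by the perimeter bound of Lemma~\ref{lem:quantWill}) as the genuine analytic input.

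Two minor remarks. First, your parenthetical that the ``length bound com[es] from $W(E_n)\les 1$'' is not right: a bound on $W$ alone does \emph{not} control the perimeter of a simply connected planar set (this is exactly the ``long thin filaments'' scenario you mention later), and the paper devotes Lemma~\ref{lem:quantWill} to proving the perimeter bound under the stronger hypothesis $W(E)\le W(B_1)+\delta_0$. Second, for \eqref{quantWill2} the paper runs a second contradiction argument (assume the ratio $(W(E_n)-W(B_1))/(P(E_n)-P(B_1))\to 0$, use Lemma~\ref{lem:quantWill} to bound $P(E_n)$ and hence force $W(E_n)\to W(B_1)$), whereas you argue directly: choose $\delta_0$ small enough that Lemma~\ref{lem:quantWill2} puts you in the graphical regime, then read off the inequality from the expansions. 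Your organisation is slightly more direct but amounts to the same thing.
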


In order to prove  this theorem we will need  some auxiliary results. Even for simply connected sets, uniform bounds on the volume and the elastica energy are in general not sufficient 
to obtain a perimeter or a diameter control (see for example \cite[Figure 1]{BucurHenrot}). However, this is the case for sets with  elastica energy  sufficiently small.
\begin{lemma}\label{lem:quantWill}
There exist $\delta_0>0$ and $C_0>0$ such that for every $E\in\M_{sc}(|B_1|)$ with $W(E)\leq W(B_1)+\delta_0$, there holds
\begin{equation}
	P(E) \,\leq\, C_0. \label{eq:perimeter-bound}
\end{equation}
\end{lemma}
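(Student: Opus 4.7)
The plan is to argue by contradiction: suppose there exists a sequence $(E_n)_n\subset\M_{sc}(|B_1|)$ with $W(E_n)\to W(B_1)=2\pi$ but $L_n:=P(E_n)\to\infty$. Parametrize $\partial E_n$ by arc length with tangent angle $\theta_n$; since $\partial E_n$ is a Jordan curve, Hopf's Umlaufsatz yields $\int_0^{L_n}\theta_n'\,ds=2\pi$, and by definition $W(E_n)=\int_0^{L_n}(\theta_n')^2\,ds$. The elementary Cauchy--Schwarz bound $4\pi^2\leq L_n W(E_n)$ provides only the lower bound $L_n\geq 4\pi^2/W(E_n)\to 2\pi$, so the entire difficulty is in the upper bound on $L_n$.

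The key input is the scale-invariant Bucur--Henrot inequality $W(F)^2|F|\geq 4\pi^3$, valid for simply connected $F\in \M$ and sharp iff $F$ is a ball. For the $E_n$ this reads $\pi W(E_n)^2\geq 4\pi^3$, i.e.\ $W(E_n)\geq 2\pi$, and is asymptotically saturated under the contradiction hypothesis; yet the isoperimetric ratio $L_n^2/(4\pi|E_n|)=L_n^2/(4\pi^2)\to\infty$ keeps the $E_n$ arbitrarily far from balls in shape. A quantitative rigidity version of the Bucur--Henrot inequality -- one producing a strictly positive excess in $W^2|E|-4\pi^3$ controlled from below by the isoperimetric deficit -- would then immediately rule this out and yield $L_n\leq C_0$.

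The main obstacle is to obtain such a quantitative rigidity without invoking Lemma \ref{lem:quantWill2}, which comes later and builds on the present perimeter bound. A natural route is to retrace the inner-parallel-set argument of \cite{BucurHenrot}: for $E_t:=\{x\in E:\dist(x,\partial E)>t\}$, as long as $E_t$ remains a simply connected smooth domain one has $\tfrac{d}{dt}P(E_t)=-\int_{\partial E_t}H\,d\Ha^1=-2\pi$ by Hopf, and inserting this into $|E|=\int_0^{r_{\max}}P(E_t)\,dt$ together with Cauchy--Schwarz on the curvature recovers $W(E)^2|E|\geq 4\pi^3$. Each time $E_t$ first splits into several components, or develops a singularity, an extra $2\pi$ is lost from the Gauss--Bonnet identity; tracking these losses produces a universal defect bounded below by the isoperimetric deficit of $E$, which is exactly the quantitative rigidity needed to conclude.
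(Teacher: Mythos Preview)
Your proposal has a genuine gap at its core. The inner-parallel-set argument you sketch does \emph{not} recover the Bucur--Henrot inequality $W(E)^2|E|\ge 4\pi^3$. The coarea identity $|E|=\int_0^\infty P(E_t)\,dt$ together with $P(E_t)'\le -2\pi$ yields $|E|\le P(E)^2/(4\pi)$, i.e.\ the isoperimetric inequality, while Cauchy--Schwarz with Hopf gives $P(E)W(E)\ge 4\pi^2$. These two estimates point in opposite directions and cannot be combined to produce a \emph{lower} bound on $W(E)^2|E|$. (Indeed, if the parallels stayed smooth and simply connected you would get $|E|=P(E)^2/(4\pi)$ exactly, which forces $E$ to be a disc; so for any non-circular $E$ the singularities are essential, not a perturbation.) The Willmore energy does not enter the inner-parallel computation at all beyond the Cauchy--Schwarz step, so there is no mechanism by which ``tracking the losses'' at topological transitions could manufacture a lower bound on $W(E)^2|E|-4\pi^3$ in terms of the isoperimetric deficit. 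In particular, this is not how \cite{BucurHenrot} prove their inequality. The quantitative statement you are after is essentially \eqref{quantWill2}, whose proof in the paper does rely on the present lemma; you correctly identified the circularity but did not find a way around it.

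The paper's argument is entirely different and geometric. One first bounds the diameter, not the perimeter. If $\diam(E_n)\to\infty$, a Fubini argument locates a section where two nearby boundary points $x_n,y_n$ satisfy $|x_n-y_n|\lesssim d_n^{-1}$. A careful analysis using only the bound $W(E_n)\le 2\pi+\delta_0$ shows the tangent lines at $x_n,y_n$ are nearly parallel; one can then perturb one branch to touch the other, splitting $E_n$ into two simply connected drops while changing $W$ by at most $O(d_n^{-1})$. The two-drop lower bound from \cite[Theorem~3.5]{BucurHenrot} then forces $W(E_n)\ge(1+\delta_*)W(B_1)-O(d_n^{-1})$, contradicting $W(E_n)\to 2\pi$. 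Once $\diam(E)\le C$, the perimeter bound follows from the elementary estimate $L=\int_0^L|\dot\gamma|^2=-\int_0^L\gamma\cdot\ddot\gamma\le L^{1/2}\diam(E)\,W(E)^{1/2}$.
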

\begin{proof}
We first prove, by contradiction, a corresponding bound for the diameter, and then deduce the perimeter bound.
\begin{enumerate}[align=left, leftmargin=0pt, 
listparindent=\parindent, labelwidth=0pt, itemindent=!,label=\underline{Step \arabic*}.]
\item
Assume that there exists a sequence $(E_n)_{n\in\N}$ in $\M_{sc}(|B_1|)$ with 
\begin{equation*}
	W(E_n)\,\to\, W(B_1)\quad\text{ and }\quad d_n=\diam(E_n)\to\infty. 
\end{equation*}
In the following steps, the implicit constants in $\les$ and $\ges$ estimates may depend on the fixed sequence $(E_n)_n$ but are independent of $n\in\N$.

First, up to a rotation, we may assume that for every $\xi\in(0,d_n)$ the vertical section $E^n_{\xi}=E_n\cap (\{\xi\}\times \R)\neq \emptyset$ is not empty. By Fubini's Theorem, there exists $\xi_n\in(\frac{d_n}{3},\frac{2d_n}{3})$ such that $|E^n_{\xi_n}|\les d_n^{-1}$. Then, there also exist $x_n\in \partial E_n\cap E^n_{\xi_n}$ 
and $y_n\in \partial E_n\cap E^n_{\xi_n}$ such that $(x_n,y_n)\subset E_n$ and $|x_n-y_n|\les  d_n^{-1}$.
\item
Choose an oriented tangent field $\tau$ on $\partial E_n$. We claim that, for some $C_1>0$ independent of $n$, we have
\begin{equation}\label{hyptang}
	\big|\tau(x_n)^\perp \cdot \tau(y_n)\big| \leq C_1d_n^{-1/3}.
\end{equation} 
Indeed, assume that \eqref{hyptang} does not hold so that there exists $\Lambda_n\to \infty$ with 
\begin{equation}\label{abshyptang}
 \big|\tau(x_n)^\perp \cdot \tau(y_n)\big| \geq \Lambda_n d_n^{-1/3}.
\end{equation}
Without loss of generality, using another translation and rotation, we can assume that $x_n=0$ and $\tau(x_n)=e_1$ (see Figure \ref{fig:intersect}). 
\begin{figure}[!h]
\centering
\resizebox{0.7\textwidth}{!}{\input{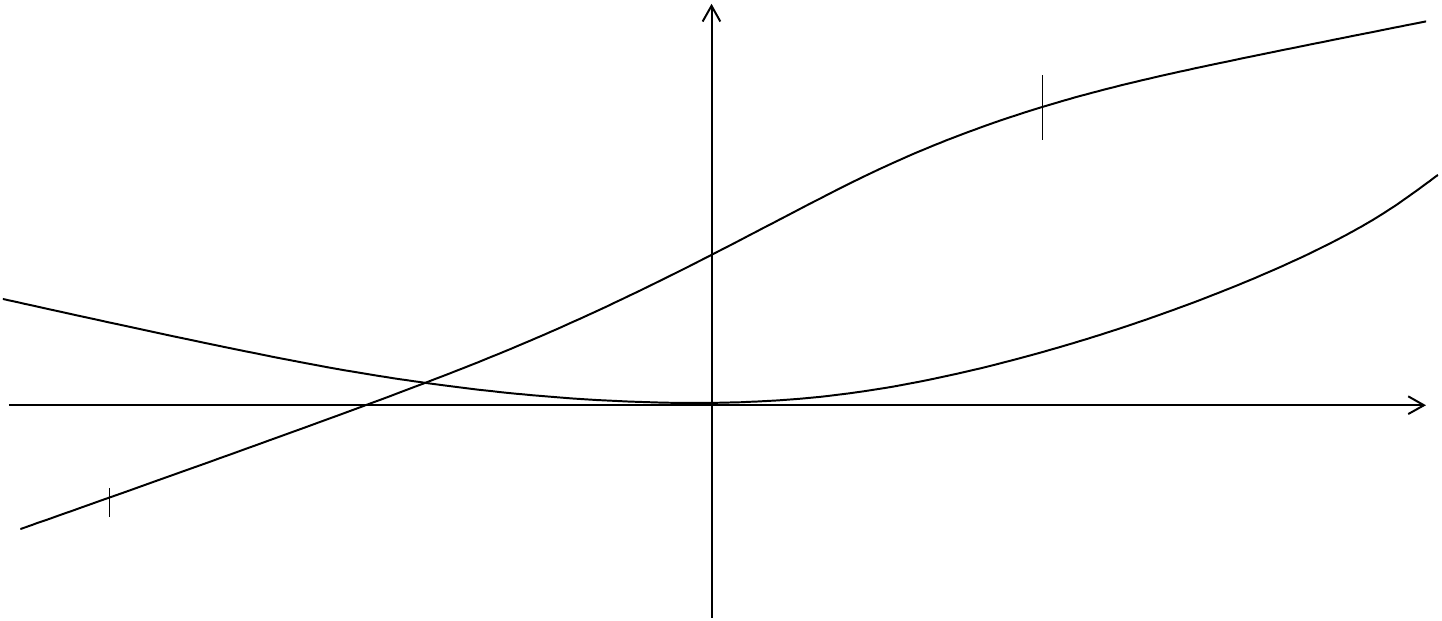_t}}
\caption{Geometry of $\partial E_n$ around $x_n$ under assumption \eqref{abshyptang}.}\label{fig:intersect}
\end{figure}

By the bound on the elastica energy we can locally write the part of $\partial E_n$ containing $0$ as a graph of the form $\{(\xi,f(\xi)) : |\xi|\le d_n^{-2/3}\}$ with $f(0)=f'(0)=0$ and uniformly bounded slope (see \cite[Lemma 2.1]{BucurHenrot}). 
Moreover, $\sup_{|\xi|\le d_n^{-2/3}} |f(\xi)|\leq C d_n^{-1}$, since for $0\le\xi\le d_n^{-2/3}$
\begin{align*}
	|f(\xi)|\,&\leq\, \int_0^{\xi} (\xi-s)|f''(s)|\,ds \\
	&\leq\, \Big(\int_0^\xi \frac{(f'')^2}{(1+(f')^2)^{\frac{5}{2}}}\Big)^{1/2}\Big(\int_0^\xi (\xi-s)^2 (1+(f')^2)^{\frac{5}{2}}\Big)^{1/2}\\
	&\leq\,C \sqrt{W(E_n)}\lt(1+\sup_{|\xi|\le d_n^{-2/3}}|f'|^{\frac{5}{2}}\rt)\xi^{\frac{3}{2}},
\end{align*}
and similarly for $-d_n^{-2/3}\le \xi\le 0$.
Let $\gamma^n=(\gamma_1^n,\gamma_2^n)$ be an arclength parametrization of $\partial E_n$. We may assume that $y_n=\gamma^n(0)$ with $|\gamma^n(0)|\les d_n^{-1}$. 
By \eqref{abshyptang}, $|\dot{\gamma}^n_2(0)|=\big|\tau(x_n)^\perp \cdot \tau(y_n)\big|\ge  \Lambda_n d_n^{-1/3}$. Let us assume for definiteness  that $\dot{\gamma}^n_2<0$ (the other case being analogous). We then have by the bound on the elastica energy that for every $|t|\le d_n^{-2/3}$, 
\begin{equation*}
  \dot{\gamma}_2^n(t)=\dot{\gamma}_2^n(0)+\int_0^t \ddot{\gamma}_2^n ds\le -\Lambda_n d_n^{-1/3} + t^{1/2} \lt(\int_0^t (\ddot{\gamma}_2^n)^2\rt)^{1/2} ds\le -\frac{\Lambda_n}{2} d_n^{-1/3}. 
\end{equation*}

Let $\eta$ be a small constant  chosen so that for $t\in[-\eta d_n^{-2/3}, \eta d_n^{-2/3}]$,
\begin{equation*}
  |\gamma_1^n(t)|=\lt|(y_n)_1+\int_0^t \dot{\gamma}_1^n(s)ds\rt|\les d_n^{-1}+ \eta d_n^{-2/3}\le d_n^{-2/3}.
\end{equation*}
This implies that for $t\in[-\eta d_n^{-2/3}, \eta d_n^{-2/3}]$, $\gamma^n(t)$ stays inside the cylinder $[-d_n^{-2/3},d_n^{-2/3}]\times \R$. Furthermore,
\begin{equation*}
   \gamma_2^n(-\eta d_n^{-2/3})=(y_{n})_2 - \int_{-\eta d_n^{-2/3}}^0 \dot{\gamma}_2^n(s) ds \ges -d_n^{-1}+ \Lambda_n d_n^{-1}> \sup_{|\xi|\le d_n^{-2/3}} |f(\xi)|,
\end{equation*}
and 
\begin{equation*}
   \gamma_2^n(\eta d_n^{-2/3})=(y_{n})_2+\int_0^{\eta d_n^{-2/3}} \dot{\gamma}_2^n(s) ds \les d_n^{-1}- \Lambda_n d_n^{-1}< -\sup_{|\xi|\le d_n^{-2/3}} |f(\xi)|.
\end{equation*}
Therefore, the graph $\{(\xi,f(\xi)) \ : \ |\xi|\le d_n^{-2/3}\}$ splits the cylinder $[-d_n^{-2/3},d_n^{-2/3}]\times \R$ into two connected components with $\gamma_2^n(-\eta d_n^{-2/3})$ in one of the components and $\gamma_2^n(\eta d_n^{-2/3})$ in the other (see Figure \ref{fig:intersect}). 
\begin{figure}[!h]
\centering
\resizebox{0.6\textwidth}{!}{\input{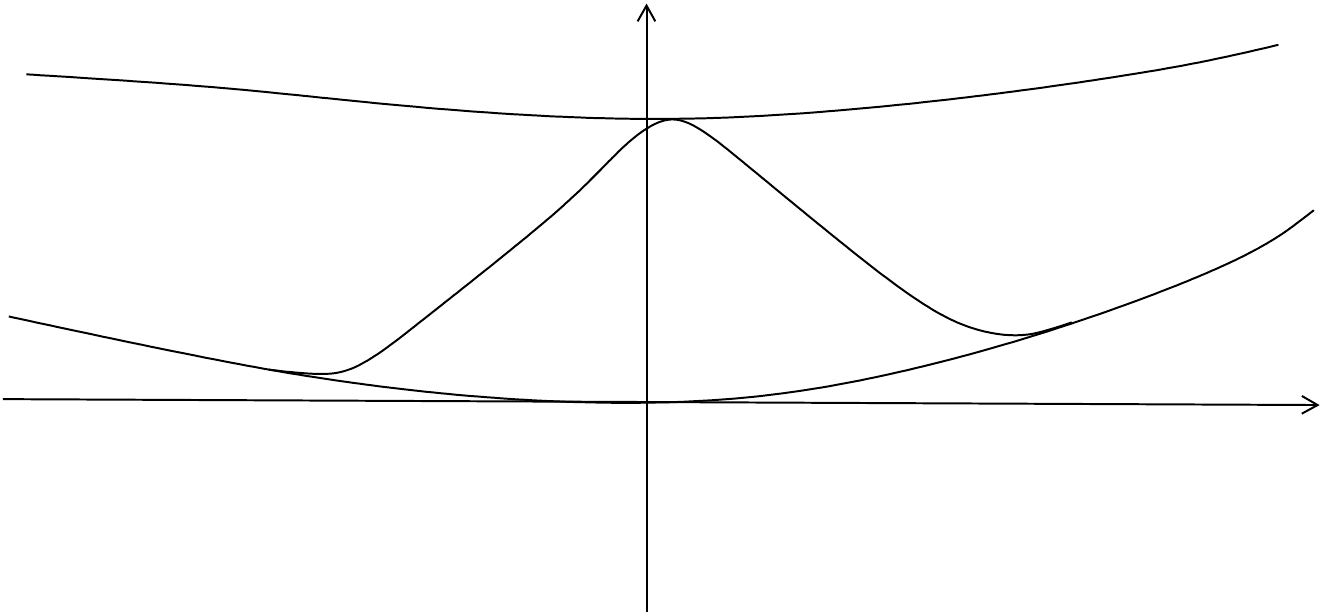_t}}
\caption{Construction of the deformed set.}\label{fig:deform}
\end{figure}
Hence, 
$\gamma^n$ intersects the graph of $f$ which contradicts the fact that $\partial E_n$ can be locally written as a graph. We have thus shown that \eqref{hyptang} holds.
\item
We recall that we assume (without loss of generality) that $x_n=0$ and $\tau(x_n)=e_1$. By the bound on the elastica energy and  \eqref{hyptang}, for $\eta$ small enough (but not depending on $n$), $x_n$ and $y_n$ belong to two different connected components of
$\partial E_n \cap Q_\eta$,  where $Q_\eta=(-\eta,\eta)^2$. Let  $\Gamma_1$, respectively $\Gamma_2$ be the connected component containing $x_n$, respectively $y_n$. 
By \cite[Lemma 2.1]{BucurHenrot} and \eqref{hyptang}, we have $\Gamma_1\cap Q_\eta=\{(\xi,f(\xi)), |\xi|\le \eta\}$ and $\Gamma_2\cap Q_\eta=\{(\xi,g(\xi)), |\xi|\le \eta\}$ where without loss of generality, we can assume that $f(\xi)<g(\xi)$ for $|\xi|\le \eta$ (see Figure \ref{fig:deform}).     
Let now $\phi\in C^\infty(-\eta,\eta)$ be a non-negative bump function with $\phi(0)=1$ and let $d_n^{-1}\ges t>0$ be such that 
\begin{equation*}
   \max_{[-\eta,\eta]} \big((f+t\phi)-g\big) = 0. 
\end{equation*}
If we replace in $Q_\eta$ the component $\Gamma_1$ by $\widetilde{\Gamma}_1=\{ (\xi,f(\xi)+t\phi(\xi)) \ : \ |\xi|\le \eta\}$,  we obtain a new set $\widetilde{E}_n$ with $|\widetilde{E}_n|\le |E_n|$ (since by construction $\widetilde{E}_n\subset E_n$) and 
\begin{equation*}
   |W(\widetilde{E}_n)-W(E_n)|\le \lt|\int_{-\eta}^{\eta} \frac{(f''+t\phi'')^2}{(1+(f'+t\phi')^2)^{5/2}}- \frac{f''^2}{(1+f'^2)^{5/2}}\rt|\les t\les d_n^{-1},
\end{equation*}
where we have used that thanks to the energy estimate, $f'$ is uniformly small in $(-\eta,\eta)$ to make the  Taylor expansion. The set $\widetilde{E}_n$ is made of two drops  $E_n^1$ and $E_n^2$ with mass $m_1^n=|E_n^1|$ and $m_2^n=|E_n^2|$ satisfying $m_1^n+m_2^n\le |E_n|$. From \cite[Theorem 3.5]{BucurHenrot}, for every couple of drops $E$ and $F$, with $|E|+|F|=|B_{2^{-1/3}}|$,
\begin{equation*}
   W(F)+W(E)\ge (1+\delta_*) W(B_{2^{-1/3}}),
\end{equation*}
for some $\delta_*>0$.
By scaling, we deduce that if we choose a ball $B$ such that $|B|=|m_1^n|+|m_2^n|\le |E|=|B_1|$,  then
\begin{equation*}
    W(\widetilde{E}_n)=W(E^1_n)+W(E^2_n)\ge (1+\delta_*) W(B)\ge (1+\delta_*) W(B_1),
\end{equation*}
from which we obtain that 
\begin{equation*}
  W(E_n)\ge  W(\widetilde{E}_n)-Cd_n^{-1}\ge \lt(1+\frac{\delta_*}{2}\rt) W(B_1),
\end{equation*}
contradicting the fact that $W(E_n)\to W(B_1)$.
\item
By the previous steps we know that there exists $R>0$ such that $\diam(E)\leq 2R$ for all $E\in\M_{sc}(|B_1|)$ with $W(E)\leq W(B_1)+\delta_0$. Therefore, after translation $E\subset {B_R}$. We now choose an arclength parametrization $\gamma:[0,L]\to \R^2$, $L=P(E)$ and obtain as in \cite[Lemma 2.5]{BucurHenrot}
\begin{align*}
	L\,=\, \int_0^L |\dot\gamma|^2\,ds \,=\, -\int_0^L \gamma\cdot \ddot\gamma\,ds \,\leq\,\Big(\int_0^L|\gamma|^2\,ds\Big)^{\frac{1}{2}} \sqrt{W(E)}\,\leq\, L^{\frac{1}{2}}R(2\pi+\delta_0)^{\frac{1}{2}},
\end{align*}
from which the perimeter bound follows.
\end{enumerate}
\end{proof}

\begin{lemma}\label{lem:quantWill2}
Let $(E_n)_n$ be a sequence in $\M_{sc}(|B_1|)$ with $W(E_n)\to W(B_1)$ as $n\to\infty$. For every $n\in\N$ let $\gamma_n:[0,2\pi)\to\R^2$ be a constant speed parametrization of $\partial E_n$. Then, 
after translation $\gamma_n$ converges  strongly in $W^{2,2}$ to a (unit speed) arclength parametrization of $\partial B_1$. 
\end{lemma}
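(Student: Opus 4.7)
The plan is to combine the bounds from Lemma~\ref{lem:quantWill} with lower semicontinuity and the ball characterization of \cite{BucurHenrot,FeKN16} to identify a weak $W^{2,2}$ limit, then upgrade to strong convergence via convergence of $L^2$ norms of the second derivatives $\ddot\gamma_n$.

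First, I would use Lemma~\ref{lem:quantWill} to obtain a uniform perimeter bound $L_n := P(E_n) \leq C_0$, and Step~1 of its proof to obtain a uniform diameter bound; after translating each $E_n$ I may assume $E_n \subset B_R$ for some fixed $R$. Since $\gamma_n$ has constant speed $L_n/(2\pi)$ with $\dot\gamma_n\cdot\ddot\gamma_n\equiv 0$, the elastica energy rewrites as
\begin{equation*}
W(E_n) \,=\, \Big(\tfrac{2\pi}{L_n}\Big)^3 \int_0^{2\pi} |\ddot\gamma_n|^2\,d\theta,
\end{equation*}
so $\|\ddot\gamma_n\|_{L^2}$ stays bounded. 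Combined with $\|\gamma_n\|_\infty \leq R$ this yields $W^{2,2}$-equiboundedness. Extracting a subsequence, $\gamma_n$ would converge weakly in $W^{2,2}$ and strongly in $C^1$ to some constant-speed $\gamma$ with $L_n \to L := 2\pi|\dot\gamma|$, and the associated sets would converge in $L^1$ to a limit set $E$ with $|E|=|B_1|$.

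Next, weak $W^{2,2}$ lower semicontinuity together with $L_n\to L$ would give $W(E) \leq \liminf_n W(E_n) = W(B_1)$, and the uniqueness of balls as minimizers of the elastica among simply connected sets of given area (\cite{BucurHenrot,FeKN16}) would force $E$ to be a unit ball; after a further translation, $E = B_1$, so $L = 2\pi$ and $\gamma$ is a unit speed parametrization of $\partial B_1$. To upgrade to strong convergence, the identity above yields
\begin{equation*}
\|\ddot\gamma_n\|_{L^2}^2 \,=\, \Big(\tfrac{L_n}{2\pi}\Big)^3 W(E_n) \,\longrightarrow\, W(B_1) \,=\, 2\pi \,=\, \|\ddot\gamma\|_{L^2}^2,
\end{equation*}
so weak convergence in $L^2$ together with convergence of norms implies strong $L^2$-convergence of $\ddot\gamma_n$; combined with the $C^1$ convergence this delivers $\gamma_n\to\gamma$ strongly in $W^{2,2}$. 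A standard subsequence argument then promotes this to convergence of the entire original sequence, once the translations and a parametrization phase have been normalized.

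The delicate point will be justifying that the $C^1$ weak limit $\gamma$ genuinely parametrizes the boundary of a single simply connected region of area $|B_1|$, so that the minimality result of \cite{BucurHenrot,FeKN16} actually applies; a priori the limit could exhibit tangential self-touchings or loss of enclosed area. I would rule out such degeneracies using the tangent-control argument from Step~2 of the proof of Lemma~\ref{lem:quantWill}, which prevents two distinct points of $\partial E_n$ from becoming close with almost-parallel tangents when the elastica deficit is small, and hence excludes pinching in the limit.
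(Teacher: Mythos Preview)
Your overall architecture---uniform $W^{2,2}$ bounds from Lemma~\ref{lem:quantWill}, weak compactness, identification of the limit as $\partial B_1$, then upgrade to strong convergence via $\|\ddot\gamma_n\|_{L^2}^2\to\|\ddot\gamma\|_{L^2}^2$---is exactly what the paper does in its last sentence. The substantive difference is in how the limit is identified. The paper does not try to argue directly that the weak limit curve bounds a simply connected region; instead it rescales, $\widetilde E_n=2^{-1/3}E_n$, so that $(\widetilde E_n)_n$ becomes a minimizing sequence for the unconstrained functional $E\mapsto |E|+\tfrac12 W(E)$ with uniformly bounded perimeter, and then quotes \cite[Section~4]{BucurHenrot}, where the compactness and convergence of such minimizing sequences to the ball (including the embeddedness issue) is already established.

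Your own route to the identification has a gap at precisely the point you flag. The fix you propose does not do what you claim: Step~2 of the proof of Lemma~\ref{lem:quantWill} is carried out under the contradiction hypothesis $d_n\to\infty$, and its conclusion \eqref{hyptang} says that the tangents at two nearby boundary points \emph{are} nearly parallel, not that they are not. It is the deformation argument of Step~3 (touching the two graphs and splitting into two drops, then invoking \cite[Theorem~3.5]{BucurHenrot}) that produces the contradiction. An adaptation of that Step~3 argument to your bounded-diameter setting, with the closeness scale $|x_n-y_n|\to 0$ replacing $d_n^{-1}$, might well exclude pinching in the limit, but this is not what you wrote and would need to be spelled out. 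The cleaner alternative is to follow the paper: rescale to a minimizing sequence for $|E|+\tfrac12 W(E)$ and cite \cite[Section~4]{BucurHenrot} directly.
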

\begin{proof}
Consider the sets $\widetilde E_n= 2^{-\frac{1}{3}}E_n$. It follows from \cite{BucurHenrot} and our assumptions that $(\widetilde E_n)_n$ is a minimizing sequence of the 
functional $E\mapsto |E|+\frac{1}{2}W(E)$ on $\M$. Moreover, $(\widetilde E_n)$ has uniformly bounded perimeter by Lemma \ref{lem:quantWill}. For such sequences it is proved in 
\cite[Section 4]{BucurHenrot} that, after translation, $\widetilde E_n$ must converge to $B_{2^{-\frac{1}{3}}}$. Hence $E_n$ converges to $B_1$. This gives first weak convergence in $W^{2,2}$ which combined with
the convergence of the energy gives the strong convergence.
\end{proof}

\begin{proof}[Proof of Theorem \ref{thm:quantWill}]
Without loss of generality, we may assume that $R=1$ and $|E|=\pi$. 
\begin{enumerate}[align=left, leftmargin=0pt, 
listparindent=\parindent, labelwidth=0pt, itemindent=!,label=\underline{Step \arabic*}.]
\item
Assume for the sake of contradiction that \eqref{quantWill} does not hold. Then, there exists a sequence $(E_n)_n$ such that 
\begin{equation}\label{hypabsurdW}
 \frac{W(E_n)-W(B_1)}{\min_x |E_n\Delta B_1(x)|^2}\to 0.
\end{equation}
Since $\min_x |E_n\Delta B_1(x)|$ is bounded, this implies that $W(E_n)\to W(B_1)$ as $n\to\infty$. We then obtain from Lemma \ref{lem:quantWill} that the diameter $d_n$ of $E_n$ remains bounded.
Hence, by Lemma \ref{lem:quantWill2} $E_n$ must converge up to translation to $B_1$ strongly in $W^{2,2}$ and thus by Sobolev embedding, also in  $C^{1,\alpha}$ for every $\alpha\le 1/2$. Thus, for $n$ large enough, $\partial E_n$ is a graph over $B_1$ and $\partial E_n$ is nearly spherical.
Since  the barycenter of $E_n$ is converging to zero, $E_n$ is also a graph over the ball centered in its barycenter. Up to a translation, this means that we can apply
\eqref{quantWillspher} and  obtain   that
\begin{equation}
  W(E_n)-W(B_1)\ges\int_0^{2\pi} \phi_n^2\ges |E_n \Delta B_1|^2, \label{eq:quantWill-pf}
\end{equation}
which contradicts \eqref{hypabsurdW}.
\item
We now turn to \eqref{quantWill2}. The additional assumption implies in the rescaled setting that $W(E)\leq 2\pi+\delta_0$.  If \eqref{quantWill2} does not hold there exists a sequence $(E_n)_n$ such that
\begin{equation}\label{hypabsurdW-1}
 \frac{W(E_n)-W(B_1)}{P(E_n)-P(B_1)}\to 0.
\end{equation}
In particular, since by Lemma \ref{lem:quantWill}, $P(E_n)$ is uniformly bounded, we have as above that $W(E_n)\to W(B_1)$ as $n\to\infty$. Using \eqref{volumeconstraint} and \eqref{quantpernearlyspher} we deduce as in \eqref{eq:quantWill-pf} that 
\begin{equation*}
  W(E_n)-W(B_1)\ges  \int_0^{2\pi} \Big(\dot{\phi}_n^2+\phi_n^2\Big) \ges P(E_n)-P(B_1)
\end{equation*}
 which is in contradiction with \eqref{hypabsurdW-1}.
\end{enumerate}
\end{proof}
\subsection{Minimizers of $\F_\lambda$}
We move on and remove the constraint that sets are simply connected. Since annuli of very large diameter have vanishing elastica energy, in order  to have a well-posed problem, 
we need to consider $\lambda>0$.
Let us recall that the energy is given by
\begin{equation*}
  	\F_\lambda(E) = \lambda P(E)+W(E) = \int_{\partial E} \big(\lambda +H^2\big)\,d\Ha^1.
\end{equation*}
Up to a rescaling, we may restrict ourselves to the problem
\begin{align}
 \min_{\M(|B_1|)} &\F_\lambda(E).  \label{probisoper}
\end{align}
 We will show below that depending on the value of $\lambda$, minimizers are either balls or annuli. 
Before stating the precise result we compute the energy of an annulus. 

\begin{lemma}\label{rem:annuli}
For every $r>0$, the energy $\F_{\lambda}$ of an annulus with inner radius $r>0$ and area equal to $\pi$ is given by
\begin{equation}
	f_\lambda(r) = 2\pi\lt[ \lambda( r+(1+r^2)^{1/2})+\frac{1}{r}+\frac{1}{(1+r^2)^{1/2}}\rt]. \label{eq:flambda}
\end{equation}
The function $f_\lambda:(0,\infty)\to\R$ is strictly convex and $f^{(3)}_\lambda<0$. Its unique minimizer $r_\lambda$ is the unique solution of 
\begin{equation}\label{ELannulus}
  \lambda \lt(1+ \frac{r}{(1+r^2)^{1/2}}\rt)-\frac{1}{r^2}-\frac{r}{(1+r^2)^{3/2}}=0.
\end{equation}
\end{lemma}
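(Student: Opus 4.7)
The plan is a direct computation. An annulus with inner radius $r$ and area $\pi$ has outer radius $R=\sqrt{1+r^2}$, obtained by solving $\pi(R^2-r^2)=\pi$. Its two boundary circles have constant curvatures $1/r$ and $1/R$, so $P(E)=2\pi(r+R)$ and $W(E)=2\pi/r+2\pi/R$; inserting $R=\sqrt{1+r^2}$ into $\lambda P(E)+W(E)$ yields \eqref{eq:flambda}. Differentiating once gives
\[
\frac{f_\lambda'(r)}{2\pi}\,=\,\lambda\Bigl(1+\frac{r}{\sqrt{1+r^2}}\Bigr)-\frac{1}{r^2}-\frac{r}{(1+r^2)^{3/2}},
\]
so that \eqref{ELannulus} is exactly the condition $f_\lambda'(r)=0$.

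For strict convexity I would differentiate once more:
\[
\frac{f_\lambda''(r)}{2\pi}\,=\,\frac{\lambda}{(1+r^2)^{3/2}}+\frac{2}{r^3}+\frac{2r^2-1}{(1+r^2)^{5/2}}.
\]
For $\lambda\ge 0$ the first two summands are nonnegative, and the last one is negative only on $(0,1/\sqrt{2})$, where however $|2r^2-1|(1+r^2)^{-5/2}<1<2/r^3$, so $f_\lambda''>0$ on $(0,\infty)$. A third differentiation gives
\[
\frac{f_\lambda^{(3)}(r)}{2\pi}\,=\,-\frac{3\lambda r}{(1+r^2)^{5/2}}-\frac{6}{r^4}+\frac{3r(3-2r^2)}{(1+r^2)^{7/2}}.
\]
The first two summands are negative and the third is non-positive for $r\ge\sqrt{3/2}$, so $f_\lambda^{(3)}<0$ there. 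For $0<r<\sqrt{3/2}$ the AM--GM inequality $1+r^2\ge 2r$ gives $3r(3-2r^2)(1+r^2)^{-7/2}\le 9/(2^{7/2}r^{5/2})$, which is dominated by $6/r^4$ once one verifies the elementary bound $9r^{3/2}\le 48\sqrt{2}$ for $r\le\sqrt{3/2}$.

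Finally, strict convexity forces at most one critical point of $f_\lambda$ on $(0,\infty)$. In the setting of interest $\lambda>0$, and then the $1/r$ and $\lambda r$ contributions force $f_\lambda(r)\to\infty$ as $r\to 0^+$ and as $r\to\infty$, so a unique minimizer $r_\lambda$ exists and is characterized by \eqref{ELannulus}. No conceptual obstacle is anticipated; the only slightly delicate point is verifying the correct signs of $f_\lambda''$ and $f_\lambda^{(3)}$ in the small-$r$ regime, where the contributions from the inner and outer boundary components could in principle cancel, which is however handled by the elementary estimates above.
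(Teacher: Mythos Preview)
Your proof is correct and follows essentially the same approach as the paper: derive \eqref{eq:flambda} from $R=\sqrt{1+r^2}$, differentiate to obtain \eqref{ELannulus}, and then verify the signs of $f_\lambda''$ and $f_\lambda^{(3)}$ by elementary case analysis. The only differences are cosmetic: the paper splits $f_\lambda''$ at $r=1$ rather than at $r=1/\sqrt{2}$, and for $f_\lambda^{(3)}$ it bounds $-6/r^4$ from above by $-30/(1+r^2)^{7/2}$ (using $(1+r^2)^{7/2}\ge 5r^4$) to reduce everything to the negativity of the cubic $-2r^3+3r-10$, whereas you use AM--GM to compare $3r(3-2r^2)/(1+r^2)^{7/2}$ directly against $6/r^4$.
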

\begin{proof}
The strict convexity of $f_\lambda$ follows from
\begin{equation*}
  \lt[r^{-1}+(1+r^2)^{-1/2}\rt]''=2 r^{-3} +(1+r^2)^{-5/2}(2r^2-1)\ge 2 r^{-3} -(1+r^2)^{-5/2}>0,
\end{equation*}
which can be checked by considering separately the case $r\le 1$ and the case $r\ge 1$. Since $f_\lambda(r)\to\infty$ as $r\to 0$ and $r\to\infty$ 
it has a unique minimizer $r_\lambda>0$ which then satisfies \eqref{ELannulus}. We further compute that 
\begin{equation*}
	f_\lambda^{(3)}(r) = - \frac{3 \lambda r}{\left(r^{2} + 1\right)^{\frac{5}{2}}} - \frac{15 r^{3}}{\left(r^{2} + 1\right)^{\frac{7}{2}}} + \frac{9 r}{\left(r^{2} + 1\right)^{\frac{5}{2}}} - \frac{6}{r^{4}}\leq 3\frac{-2r^3+3r-10}{(r^2+1)^\frac{7}{2}}<0.
\end{equation*}
\end{proof}

We  may now solve the minimization problem \eqref{probisoper}. 
\begin{theorem}\label{theo:minFlambda}
There exists $\bar \lambda>0$ such that for $\lambda\in(0, \bar \lambda)$, minimizers  of \eqref{probisoper} are annuli of inner radius $r_\lambda$ (as defined in Lemma \ref{rem:annuli}) while for
$\lambda>\bar \lambda$ they are balls of radius one. Moreover, for $\lambda= \bar \lambda$  minimizers can be either a ball  of radius one or an annulus of inner radius $r_{\bar \lambda}$.
\end{theorem}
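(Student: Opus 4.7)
The plan is to derive a pointwise lower bound for $\F_\lambda(E)$ in terms of an algebraic functional of the effective radii of the boundary components of $E$, reduce the resulting finite-dimensional minimization to a one-parameter comparison (ball versus annulus), and then identify the threshold $\bar\lambda$ by elementary one-variable analysis.

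First I would decompose $\partial E$ into its finitely many Jordan components, distinguishing outer curves (each enclosing a topological disk of area $A_j$) from inner/hole curves (enclosing disks of area $a_{j,k}$); the volume constraint reads $\sum_j A_j - \sum_{j,k}a_{j,k}=\pi$. Writing $R_j:=\sqrt{A_j/\pi}$, $r_{j,k}:=\sqrt{a_{j,k}/\pi}$, and $g(\rho):=\lambda\rho+1/\rho$, each such component $\Gamma$ bounds a simply-connected region $D\in\M_{sc}$ of area $\pi\rho^2$, so combining the planar isoperimetric inequality $\mathcal{H}^1(\Gamma)\ge 2\pi\rho$ with the Bucur--Henrot inequality $\int_\Gamma H^2 = W(D)\ge W(B_\rho) = 2\pi/\rho$ yields
\begin{equation*}
	\lambda\,\mathcal{H}^1(\Gamma) + \int_\Gamma H^2 \,\ge\, 2\pi g(\rho),
\end{equation*}
with equality if and only if $\Gamma$ is a circle of radius $\rho$. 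Summing over components produces
\begin{equation*}
	\F_\lambda(E)\,\ge\, 2\pi\Big(\sum_j g(R_j) + \sum_{j,k} g(r_{j,k})\Big),\qquad \sum_j R_j^2 - \sum_{j,k} r_{j,k}^2 = 1.
\end{equation*}

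Next I would collapse this algebraic lower bound via the strict subadditivity
\begin{equation*}
	g\bigl(\sqrt{a^2+b^2}\bigr) \,<\, g(a)+g(b),\qquad a,b>0,
\end{equation*}
which follows at once from $\sqrt{a^2+b^2}<a+b$ and $1/\sqrt{a^2+b^2}\le 1/\min(a,b)<1/a+1/b$. Iterating on all outer radii gives a single effective outer radius $R$ with $R^2=\sum_j R_j^2$, and iterating on all inner radii (if any) gives a single effective inner radius $r$ with $r^2=\sum_{j,k} r_{j,k}^2$, while preserving the constraint $R^2-r^2=1$. The minimization therefore collapses to two cases: the no-hole case ($r=0$, $R=1$, energy $2\pi(\lambda+1)=\F_\lambda(B_1)$) and the one-hole case (inner radius $r>0$, with minimal value $f_\lambda(r_\lambda)$ by Lemma \ref{rem:annuli}). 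Strictness in every inequality along the chain ensures that the bound is attained only by $B_1$ or by an annulus with inner radius $r_\lambda$.

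Finally I would locate the threshold by setting $D(\lambda):=f_\lambda(r_\lambda)-2\pi(\lambda+1)$. Because $r_\lambda$ is an interior minimizer of the convex function $f_\lambda$, the envelope theorem gives
\begin{equation*}
	D'(\lambda) \,=\, 2\pi\bigl(r_\lambda + \sqrt{1+r_\lambda^2} - 1\bigr) \,>\, 0,
\end{equation*}
so $D$ is strictly increasing on $(0,\infty)$. The asymptotic $r_\lambda\to\infty$ as $\lambda\to 0^+$ yields $f_\lambda(r_\lambda)\sim 8\pi\sqrt{\lambda}$ and hence $D(\lambda)\to -2\pi<0$, while $r_\lambda\to 0$ as $\lambda\to\infty$ yields $f_\lambda(r_\lambda)\sim 2\pi(\sqrt{\lambda}+1)^2$ and hence $D(\lambda)\sim 4\pi\sqrt{\lambda}\to\infty$. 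The intermediate value theorem then produces a unique $\bar\lambda>0$ with $D(\bar\lambda)=0$, and the strict-inequality cases of the lower bound upgrade this to the stated uniqueness: for $\lambda<\bar\lambda$ the only minimizers are annuli of inner radius $r_\lambda$, for $\lambda>\bar\lambda$ the only minimizer is $B_1$, and at $\lambda=\bar\lambda$ both configurations minimize.

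The main obstacle is the per-component lower bound: one must have strict inequality off circles, which rests on the (non-trivial) Bucur--Henrot theorem invoked for each enclosed Jordan region. Once the reduction to effective radii is secured, the rest is a routine one-variable analysis of $f_\lambda$ together with a bookkeeping argument for merging.
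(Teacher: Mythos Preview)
Your proof is correct and follows essentially the same strategy as the paper: reduce each boundary component to a circle via the Bucur--Henrot inequality combined with the isoperimetric inequality, merge circles via the subadditivity of $\rho\mapsto \lambda\rho+1/\rho$ under the map $(a,b)\mapsto\sqrt{a^2+b^2}$, and then compare the ball with the optimal annulus. Your organization at the curve level (rather than at the set/component level) and your use of the envelope theorem to show $D(\lambda)$ is strictly increasing are slightly cleaner than the paper's treatment, but the underlying argument is the same.
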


\begin{proof}
For an arbitrary set $E\in \M(|B_1|)$, by translation invariance, we can in addition assume that the connected components of $E$ are far apart from each other so that in particular 
their convex envelopes do not intersect. In the first two steps we prove that minimizers must be either balls or annuli. In the final step we compare the minimal
energy of the optimal annuli with the energy of the ball to conclude the proof.
\begin{enumerate}[align=left, leftmargin=0pt, listparindent=\parindent, labelwidth=0pt, itemindent=!,label=\underline{Step \arabic*}.]
\item
Let $E$ be an admissible set. We first show that the energy of  each connected component $F$ of $E$ can be strictly decreased by transforming it into  a ball or an annulus. 
If $F$ is simply connected, by \cite{BucurHenrot,FeKN16} and the isoperimetric inequality 
\begin{equation*}
	\F_\lambda(F)\ge \F_\lambda(B_R),
\end{equation*}
for $R>0$ with $|B_R|=|F|$. Equality holds if and only if $F$ is a translate of $B_R$, proving the claim in this case. 

If $F$ is not simply connected, then $F^c$ is made of an unbounded connected component and a finite union  $G_1, \dots, G_n$ of 
 bounded simply connected sets. For $i=1,\dots n$, let $m_i=|G_i|$. By the discussion above, among all simply connected sets of mass $m_i$, $\F_\lambda$ is uniquely minimized by balls $B_i$ of area $m_i$. 
 For two balls $B_i$ and $B_j$, if $r>0$ is such that $|B_r|=|B_i|+|B_j|$, then 
 \[\F_\lambda(B_i)+\F_\lambda(B_j)>\F_\lambda(B_r)
\]
 since the inequality  is separately true for the perimeter and the elastica parts of the energy. Letting thus $r>0$ be such that $|B_r|=\sum_{i=1}^n m_i$, we have
 \begin{equation*}
  \F_\lambda(\cup_i G_i)=\sum_i \F_\lambda(G_i)\ge \sum_i \F_\lambda(B_i)\ge \F_\lambda(B_r)
 \end{equation*}
with equality if and only if $n=1$ and $G_1$ is a translate of $B_r$. The   set $F\cup \big(\cup_{i=1}^n G_i\big)$  given by filling the holes of $F$
is simply connected and letting 
$R>r$ be such that $|B_R|=|F|+|B_r|=|F\bigcup \cup_{i=1}^n G_i|$, we have as above that 
\begin{equation*}
   \F_\lambda(F\bigcup \cup_{i=1}^n G_i)\ge \F_\lambda(B_R)
\end{equation*}
with equality if and only if $F\bigcup \cup_{i=1}^n G_i$ is a translate of $B_R$. Putting all this together, we find that 
\begin{equation*}
   \F_\lambda(F)= \F_\lambda(F\bigcup \cup_{i=1}^n G_i)+ \F_\lambda(\cup_i G_i)\ge \F_\lambda(B_R)+\F_\lambda(B_r)=\F_\lambda(B_R\backslash B_r)
\end{equation*}
with equality if and only if $F$ is a (not necessarily concentric) annulus of outer radius $R$ and inner radius $r$, which again proves our claim.
\item
We are thus reduced to the class of competitors made of a finite union of balls and annuli. Since the elastica energy (respectively the perimeter) blows up when one of the radii goes to zero (respectively to infinity), existence of a minimizer in this class is easily obtained.
Let us prove that a minimizer must be either a single ball or a single annulus. If one of the connected components of $E$ is a ball then $E$ must be the ball. Indeed, by the first part of the proof, it cannot contain two balls. Moreover, the union of a ball and an annulus 
has energy higher than that of a ball of area the sum of the areas. Indeed, by the isoperimetric inequality the perimeter is better for the ball and since the elastica energy of a ball is a decreasing function
of its area, the elastica energy of a single ball is also better than the elastica energy of a ball and an annulus. 

We are left to prove that one annulus is better than two. For $i=1,2$, let $r_i$ be the internal radii and $R_i$ be the external radii so that $m_i= \pi(R_i^2-r_i^2)$ are the area of the two annuli.
We consider as a competitor the annulus $B_R\backslash B_{r_1}$ with 
\begin{equation*}
  R^2= R_1^2+(R_2^2-r_2^2).
\end{equation*}
Since the elastica part of the energy is smaller, we are left to prove that the perimeter part is smaller, too. That is indeed the case since 
\begin{equation*}
   R=(R_1^2+R_2^2-r_2^2)^{1/2}\le R_1+R_2+r_2.
\end{equation*}
\item
Let us now prove the existence of the threshold $\bar \lambda$. Let us start by noticing that if the ball is a minimizer of \eqref{probisoper} for some $\lambda$ then it is also a minimizer of \eqref{probisoper} for every $\lambda'> \lambda$. Indeed, by the isoperimetric inequality, for every set $E\in\M(|B_1|)$,
\begin{equation*}
   \F_{\lambda'}(E)=\F_\lambda(E)+(\lambda'-\lambda)P(E)\ge \F_\lambda(B_1)+(\lambda'-\lambda)P(B_1)=\F_{\lambda'}(B_1)
\end{equation*}
with equality if and only if $E$ is a ball. 

The energy of an annulus of internal radius $r$ and area equal to $|B_1|$ is 
larger than the energy of the ball $B_1$ if and only if
\begin{equation}\label{ballwin}
 2\pi \lt(\lambda +1\rt)< \min_{r>0} f_\lambda(r), 
\end{equation}
for $f_\lambda$ as defined in \eqref{eq:flambda}.
By taking as competitor $r=\lambda^{-1/2}$, we get that 
\begin{equation*}
  \min_r f_\lambda(r)\le 2\pi \lambda^{1/2}\lt(2+(1+\lambda)^{1/2}+(1+\lambda)^{-1/2}\rt),
\end{equation*}
so that for all $\lambda>0$ sufficiently small  \eqref{ballwin} does not hold and minimizers are annuli. 

Using that $\lambda r+ r^{-1}\ge 2 {\lambda}^{1/2}$ and   $(1+r^2)^{1/2}\ge 1$, we obtain the lower bound 
\begin{equation*}
   \min_r f_\lambda(r)\ge 2\pi (2\lambda^{1/2}+\lambda),
\end{equation*}
proving that \eqref{ballwin} holds for $\lambda \ge \frac{\sqrt{2}}{2}$ and concluding the proof.
\end{enumerate}
\end{proof}

\begin{remark}
Finding the explicit value of $\bar \lambda$ is not straightforward. Indeed, this entails first minimizing $f_\lambda(r)$ and then finding the range of $\lambda$ such that \eqref{ballwin} holds. The unique minimizer $r_\lambda
$ of $f_\lambda$ is determined by \eqref{ELannulus}. 

By \eqref{ballwin} we deduce that the ball is the minimizer if and only if
\begin{equation*}
 g(\lambda)= 2\pi (\lambda +1) - f_\lambda(r_\lambda
) < 0.
\end{equation*}
By minimality of $ f_\lambda$ in $r_\lambda
$ we find
\begin{equation*}
	g'(\lambda)\,=\, 2\pi - 2\pi\big(r_\lambda
+(1+r_\lambda
^2)^{\frac{1}{2}}\big)\,<\, 0.
\end{equation*}
Therefore, the threshold $\bar\lambda$ is characterized by the condition
\begin{equation*}
	g(\bar \lambda)\,=\,0.
\end{equation*}
Letting $U=(1+r^{-2})^{1/2}$, we observe that finding  $r_\lambda
$ is equivalent to find   the unique solution $U\ge 1$ of
\begin{equation*}
  \lambda U^2(1+U)=(U^2-1)(1+U^3).
\end{equation*}
Dividing by $1+U$, we are left with finding a root larger than one of the fourth order polynomial
\begin{equation*}
   U^4-U^3-\lambda U^2+U-1.
\end{equation*}
Nevertheless, a simple explicit formula for $\bar\lambda$ seems not to be available.
\end{remark}

\subsection{Stability estimates}
We now turn to stability estimates for the functional $\F_{\lambda}$. As for the proof of Theorem \ref{thm:quantWill}, we will first need to know that sets with small energy are close (in a non-quantitative way) to minimizers. 
\begin{lemma}\label{lem:conv-Alambda}
Consider a sequence of positive numbers $(\lambda_n)_n$  with $\lambda_n\to \lambda \in (0,\infty]$ and a sequence $(E_n)_n$ in $\calM(|B_1|)$ such that
\begin{equation}\label{convergmin}
	\F_{\lambda_n}(E_n)-\min_{\calM(|B_1|)}\F_{\lambda_n} \to 0\quad\text{ as }n\to\infty. 
\end{equation}
Then, up to translations and passing to a subsequence, $E_n$ converges strongly in $W^{2,2}$ to a minimizer of $\F_\lambda$ in $\M(|B_1|)$.

In particular, for all $n\in\N$ sufficiently large $E_n$ is connected and has topological genus one if $\lambda<\bar \lambda$,  zero if $\lambda>\bar \lambda$ and   zero or one if $\lambda=\bar\lambda$.
\end{lemma}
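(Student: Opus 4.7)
My plan follows the compactness strategy used in Lemma~\ref{lem:quantWill2}, now adapted to the multi-component setting. I first establish uniform energy bounds, then control the topology via an elastica-type lower bound, and finally identify the limit by a concentration-compactness argument in which the structural analysis of minimizers from the proof of Theorem~\ref{theo:minFlambda} rules out mass splitting.

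Step one is to extract uniform bounds on $P(E_n)$ and $W(E_n)$. Since $P(E_n)\ge 2\pi$ by the isoperimetric inequality and $\F_{\lambda_n}(E_n)\le \F_{\lambda_n}(B_1)+o(1)=2\pi(\lambda_n+1)+o(1)$, one gets $W(E_n)\le \F_{\lambda_n}(E_n)-2\pi\lambda_n\le 2\pi+o(1)$; comparing to a ball (if $\lambda_n$ is large) or to an optimal annulus (if $\lambda_n$ stays bounded) also yields $P(E_n)\les 1$. Each boundary component $\Gamma$ of $E_n$ is a $W^{2,2}$ Jordan curve with $|\int_\Gamma\kappa\,ds|=2\pi$, so Cauchy--Schwarz gives $\int_\Gamma\kappa^2\,ds\ge 4\pi^2/L(\Gamma)\ges 1$, whence the number of boundary components of $E_n$ (and hence both the genus and the number of connected components) is uniformly bounded. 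Parametrizing each such curve at constant speed on $[0,2\pi)$, the uniform length and bending bounds give weak $W^{2,2}$ precompactness, and hence strong $C^{1,\alpha}$ precompactness, of each parametrization up to translation.

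The main obstacle is concentration-compactness: distinct connected components of $E_n$ could drift infinitely far apart. If that happened, the weak limit would be a disjoint union $E^{(1)}\sqcup\cdots\sqcup E^{(k)}$ with $\sum_i|E^{(i)}|=|B_1|$ and $k\ge 2$; lower semicontinuity would then yield
\begin{equation*}
\liminf_n\F_{\lambda_n}(E_n)\ge \sum_{i=1}^k \F_\lambda(E^{(i)}),
\end{equation*}
but Steps~1 and~2 of the proof of Theorem~\ref{theo:minFlambda} show that this sum is strictly larger than $\min_{\M(|B_1|)}\F_\lambda$, contradicting \eqref{convergmin}. Thus, after one global translation, $E_n$ converges in measure to a bounded set $E$ with $|E|=|B_1|$, which by lower semicontinuity is a minimizer of $\F_\lambda$, and hence by Theorem~\ref{theo:minFlambda} is a ball or an annulus of the type corresponding to $\lambda$. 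Convergence of the energies together with weak $W^{2,2}$ convergence of the arclength parametrizations upgrades to strong $W^{2,2}$ convergence, and the ensuing $C^{1,\alpha}$ convergence forces $E_n$ to match the topology of $E$ for $n$ large, giving the connectedness and genus statements. The case $\lambda=\infty$ is handled identically, the limit being forced to be $B_1$ since $P(E_n)\to 2\pi$.
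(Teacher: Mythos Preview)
Your strategy differs from the paper's. The paper pins down the topology of $E_n$ \emph{before} any compactness: it shows directly, by energy comparison with balls and annuli and a subadditivity argument involving $g_n(x)=\pi^{3/2}x^{-1/2}+\sqrt{\pi}\lambda_n x^{1/2}$, that for large $n$ the set $E_n$ is connected with genus at most one, hence $E_n=F_n\setminus G_n$ for simply connected $F_n\supset G_n$. It then decomposes $\F_{\lambda_n}(E_n)-\F_{\lambda_n}(A_n)$ into three nonnegative terms $T_1^n+T_2^n+T_3^n$, shows each tends to zero, and feeds $F_n$ and $G_n$ separately into Lemma~\ref{lem:quantWill2} to get strong $W^{2,2}$ convergence to round disks. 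Your route instead extracts generic curve compactness first and reads the topology off the limit; your observation that $P(E_n)\ge 2\pi$ forces $W(E_n)\le 2\pi+o(1)$, together with the Cauchy--Schwarz bound on the number of boundary components, is cleaner than the paper's componentwise bookkeeping.

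However, the passage ``$E_n$ converges in measure to a bounded set $E$ with $|E|=|B_1|$, which by lower semicontinuity is a minimizer of $\F_\lambda$'' is a genuine gap. Lower semicontinuity of $\F_\lambda$ at the level of sets requires the limit to lie in $\M(|B_1|)$: the limit curves must be \emph{simple}, pairwise disjoint Jordan curves bounding a set of area $\pi$. Weak $W^{2,2}$ (hence $C^{1,\alpha}$) limits of embedded curves can touch tangentially, and two distinct boundary components of $E_n$ --- say the inner and outer curves of a thinning near-annulus --- can collapse onto each other; nothing in your outline excludes this. The same issue infects the splitting step: the strict inequalities from Steps~1--2 of the proof of Theorem~\ref{theo:minFlambda} are stated for admissible sets, not for possibly degenerate curve configurations, so you cannot yet invoke them on the $E^{(i)}$. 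The paper avoids this entirely by never passing through an abstract limit: once the topology is fixed, Lemma~\ref{lem:quantWill2} delivers convergence of each simply connected piece directly to a round disk. Your approach can be salvaged (for instance by arguing that energy equality in the lower semicontinuity forces each limit curve to be a round circle, and then checking the nesting a posteriori from $G_n\subset F_n$), but as written this step needs substantial additional work.
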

\begin{proof}
We may assume without loss of generality that $2\lambda\ge \lambda_n\ge \lambda/2$ for all $n\in\N$.

Let us first prove that for $n$ large enough, $E_n$ must be connected. For the sake of contradiction, assume it is not. Arguing as in the proof of Theorem \ref{theo:minFlambda}, 
we see that we can  replace  $E_n$ by a set $\widetilde E_n$ made of two connected components each of which is either a ball or an annulus with $\F_{\lambda_n}(\widetilde{E}_n)\le \F_{\lambda_n}(E_n)$ so that \eqref{convergmin} still holds for $\widetilde{E}_n$.
Inspecting the proof of Theorem \ref{theo:minFlambda} we see that we reach a contradiction since the minimum of $\F_{\lambda_n}$ in this class is larger than $\min_{\calM(|B_1|)}\F_{\lambda_n}$ by a constant not depending on $n$. 
If $\lambda<\bar \lambda$ we further obtain that for $n$ large enough, $E_n$ cannot be simply connected since otherwise we would have  by Theorem \ref{theo:minFlambda} for some $\delta(\lambda)$
\[\F_{\lambda_n}(E_n)\ge \F_{\lambda_n}(B_1)\ge  \min_{\calM(|B_1|)}\F_{\lambda_n} +\delta,\]
contradicting again \eqref{convergmin}.
 Let us prove that for any $\lambda$, $E_n$ is of genus at most one. Otherwise, we can write $E_n=F_n\backslash \cup_{i=1}^{N_n} G_i$ with $F_n$ and $G_i$ simply connected and $N_n\ge 2$. 
Letting $V_n=|F_n|$ and $m_i^n=|G_i|$, we have  $V_n-\sum_i m_i^n=|B_1|$ and by \cite{BucurHenrot,FeKN16} and the isoperimetric inequality, that 
\begin{equation*}
  \F_{\lambda_n}(E_n)\ge 2\pi^{3/2} \lt( V_n^{-1/2} +\sum_i (m_i^n)^{-1/2}\rt)+ 2\sqrt{\pi} \lambda_n \lt(V_n^{1/2} +\sum_i (m_i^n)^{1/2}\rt).
\end{equation*}
Since $g_n(x)= \pi^{3/2} x^{-1/2} +\sqrt{\pi} \lambda_n x^{1/2}$ is subadditive, and $N_n\ge 2$,  
\begin{equation*}
  \F_{\lambda_n}(E_n)\ge 2\min_{V-(m_1+m_2)=|B_1|} g_n(V)+g_n(m_1)+g_n(m_2).
\end{equation*}
Notice that $g_n\to \infty$ as $x$ tends to zero or infinity so that the minimum on the right-hand side is attained for $m_1$ and $m_2$ uniformly bounded above and below by a constant depending only on $\lambda$. Since for such values of $m_i$
\[
 g_n(m_1)+g_n(m_2)\ge g_n(m_1+m_2) +\delta(\lambda),
\]
for some $\delta(\lambda)>0$, we have by the last two inequalities and Theorem \ref{theo:minFlambda}
\[
 \F_{\lambda_n}(E_n)\ge 2\min_{V-m=|B_1|} g_n(V)+g_n(m) +\delta(\lambda)\geq \min_{\calM(|B_1|)}\F_{\lambda_n} +\delta(\lambda),
\]
contradicting \eqref{convergmin}.
  Thus, for $n$ large enough,  $E_n=F_n\backslash G_n$ with $F_n$ and $G_n$ simply connected (where $G_n=\emptyset$ is possible if and only if $\lambda\ge\bar\lambda$).

Consider first the case $G_n\not=\emptyset$ for all $n$ sufficiently large  (possibly up to a subsequence). This implies in particular that $\lambda \le \bar \lambda$. Let $A_n=B_{R_1^n}\setminus B_{R_2^n}$ be the centered annulus minimizing $\F_{\lambda_n}$ in $\calM(|B_1|)$, hence $R_2^n=r_{\lambda_n}$, $R_1^n=\sqrt{1+r_{\lambda_n}^2}$, where 
$r_{\lambda_n}$ is the unique minimum of $f_{\lambda_n}$ given in Lemma \ref{rem:annuli}. Choosing $\widetilde{R}_1^n$, $\widetilde{R}_2^n$ with $|B_{\widetilde{R}_1^n}|=|F_n|$, $|B_{\widetilde{R}_2^n}|=|G_n|$ we deduce 
\begin{align*}
	\F_{\lambda_n}(E_n)- \F_{\lambda_n}(A_n)
	&=\, \Big(\lambda_n P(F_n)+W(F_n)+\lambda_n P(G_n)+W(G_n)\Big)\\
	&\qquad -\Big[\lambda_n P(B_{{R}_1^n})+W(B_{{R}_1^n})+\lambda_n P(B_{{R}_2^n})+W(B_{{R}_2^n})\Big]\\
	&=\, \Big(\ldots\Big) - \Big[\lambda_n P(B_{\widetilde{R}_1^n})+W(B_{\widetilde{R}_1^n})+\lambda_n P(B_{\widetilde{R}_2^n})+W(B_{\widetilde{R}_2^n})\Big]\\
	&\qquad +f_{\lambda_n}(\widetilde{R}_2^n)-f_{\lambda_n}({R}_2^n)\\
	&\geq\, \lambda_n\Big(P(F_n)-P(B_{\widetilde{R}_1^n}) + P(G_n)-P(B_{\widetilde{R}_2^n}) \Big)\\
	&\qquad + \Big(W(F_n) - W(B_{\widetilde{R}_1^n}) + W(G_n) - W(B_{\widetilde{R}_2^n})\Big)\\
	&\qquad +\Big(f_{\lambda_n}(\widetilde{R}_2^n)-f_{\lambda_n}({R}_2^n) \Big)
	\qquad=\, T_1^n + T_2^n + T_3^n.
\end{align*}
By \eqref{convergmin} the left-hand side of the inequality vanishes as $n\to\infty$. Since $F_n$ and $G_n$ are simply connected, all three terms on the right-hand side are non-negative and must therefore converge to zero. 
Since $2\lambda\ge \lambda_n\geq\lambda/2$ we  have that $R_2^n$ is uniformly bounded from above and below. Since $T_3^n\to 0$ and  $R_2^n$ minimizes $f_{\lambda_n}$ we deduce by strict convexity of $f_{\lambda_n}$ and a Taylor expansion that
\begin{equation*}
	0\,=\, \lim_{n\to\infty} T_3^n\,\geq\, c(\lambda)\limsup_{n\to\infty} \, (\widetilde{R}_2^n-{R}_2^n)^2, 
\end{equation*}
and by the mass constraint, also $\widetilde{R}_1^n-{R}_1^n\to 0$. 
Since $T_2^n\to 0$ we deduce that $W(F_n)- W(B_{R_1^n})\to 0$ and $W(G_n)- W(B_{R_2^n})\to 0$. After  rescaling we can apply Lemma \ref{lem:quantWill2} and the conclusion follows.

Let us finally consider the case that $\lambda\ge\bar\lambda$  with $E_n$ simply connected. Then,
\begin{equation*}
	 \F_{\lambda_n}(E_n)- \min_{\M(|B_1|)}\F_{\lambda_n}\,
	=\, \lambda_n \big( P(E_n) -P(B_1)\big) +W(E_n)-W(B_1) + \big(\F_{\lambda_n}(B_1) - \min_{\M(|B_1|)}\F_{\lambda_n}\big),
\end{equation*}
and we obtain as in the previous case  that $W(E_n)\to W(B_1)$  concluding again by Lemma~\ref{lem:quantWill2}.
\end{proof}
We can now prove our stability estimate for $\F_\lambda$.

\begin{theorem}\label{thm:stab}
 Let $\bar \lambda$ be given by Theorem \ref{theo:minFlambda} and consider an arbitrary set $E\in \M(|B_1|)$. Then, there exists a universal constant $c_2>0$, such that for $\lambda>\bar \lambda$
\begin{equation}\label{quantWlambdabig}
  \F_{\lambda}(E)-\F_{\lambda}(B_1)\ge c_2 (\lambda-\bar \lambda)\min_x |E\Delta B_1(x)|^2,
\end{equation}
while for any  $\lambda_*>0$ there exists a constant $c(\lambda_*)>0$ such that for any $\lambda\in [\lambda_*,\bar\lambda]$
\begin{equation}\label{quantWlambdasmall}
 \F_{\lambda}(E)- \min_{\M(|B_1|)}\F_{\lambda}\ge c(\lambda_*) \min_{\Omega} |E\Delta \Omega|^2,
\end{equation}
where the minimum is taken among all sets 
$\Omega\in \M(|B_1|)$ such that $\Omega$ is a ball or $\Omega$
is an annulus minimizing $\F_{\lambda}$ in $\M(|B_1|)$.
\end{theorem}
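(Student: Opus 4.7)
\textbf{Proof plan for Theorem \ref{thm:stab}.} I would treat the two inequalities separately and handle the second one by a contradiction/compactness argument combined with a Fuglede-type expansion around either the ball or the annulus.

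For \eqref{quantWlambdabig}, the plan is to decompose
\[
\F_\lambda(E) - \F_\lambda(B_1) \,=\, \big(\F_{\bar\lambda}(E) - \F_{\bar\lambda}(B_1)\big) \,+\, (\lambda-\bar\lambda)\big(P(E)-P(B_1)\big).
\]
The first summand is non-negative since $B_1$ minimizes $\F_{\bar\lambda}$ by Theorem \ref{theo:minFlambda}. The second summand is bounded below via the classical quantitative isoperimetric inequality of Fusco--Maggi--Pratelli, which yields $P(E)-P(B_1)\ges \min_x|E\Delta B_1(x)|^2$. Combining these gives \eqref{quantWlambdabig}.

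For \eqref{quantWlambdasmall} I would argue by contradiction. Suppose that for some $\lambda_*>0$ there exist $(\lambda_n)_n\subset [\lambda_*,\bar\lambda]$ and $(E_n)_n\subset\M(|B_1|)$ with $\F_{\lambda_n}(E_n)-\min\F_{\lambda_n}=o\big(\min_\Omega|E_n\Delta\Omega|^2\big)$. Since the asymmetry is bounded by $2|B_1|$, the deficit itself vanishes, so Lemma \ref{lem:conv-Alambda} applies and yields, up to subsequence, translation, and extraction, strong $W^{2,2}$-convergence of $E_n$ to a minimizer $\Omega_\infty$ of $\F_{\lambda_\infty}$ with $\lambda_\infty=\lim\lambda_n\in[\lambda_*,\bar\lambda]$. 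By Lemma \ref{lem:conv-Alambda}, $\Omega_\infty$ is either a ball (only possible if $\lambda_\infty=\bar\lambda$) or a centered annulus $A_{\lambda_\infty}=B_{R_{\lambda_\infty}}\setminus B_{r_{\lambda_\infty}}$. In the ball case, $\partial E_n$ is eventually a nearly-spherical graph over $\partial B_1$; the identity $\F_{\lambda_n}(E_n)-\F_{\lambda_n}(B_1)\ge \lambda_*(P(E_n)-P(B_1))+(W(E_n)-W(B_1))$ combined with the quantitative isoperimetric inequality and Theorem \ref{thm:quantWill} gives a quadratic lower bound in $\min_x |E_n\Delta B_1(x)|^2$, contradicting the assumption.

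In the annulus case, I would write $\partial E_n$ as two $W^{2,2}$-small graphs $\psi_n$ and $\varphi_n$ over the outer and inner boundaries of $A_{\lambda_n}$ and Taylor expand $\F_{\lambda_n}$ at $A_{\lambda_n}$ using the analogues of \eqref{quantWillspherprep} and \eqref{quantpernearlyspher} applied separately on each component. Thanks to the Euler--Lagrange equation \eqref{ELannulus} and the second-order volume constraint
\[
R_{\lambda_n}^2 \int_0^{2\pi}\!\big(\psi_n+\tfrac{1}{2}\psi_n^2\big) \,=\, r_{\lambda_n}^2 \int_0^{2\pi}\!\big(\varphi_n+\tfrac{1}{2}\varphi_n^2\big),
\]
the linear contribution in $(\psi_n,\varphi_n)$ drops out, leaving a purely quadratic form. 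A Fourier decomposition isolates the $k=\pm 1$ modes, which correspond exactly to infinitesimal translations of $A_{\lambda_n}$ and are killed by the free choice of translation in Lemma \ref{lem:conv-Alambda}; all other modes contribute strictly positive terms, since for $|k|\ge 2$ the elastica coefficient $k^4-\tfrac{5}{2}k^2+1$ is strictly positive and the perimeter adds a further $\lambda k^2$, while the constraint rigidifies the $k=0$ modes. Continuity of $\lambda\mapsto(r_\lambda,R_\lambda)$ on the compact interval $[\lambda_*,\bar\lambda]$ upgrades this into a uniform coercivity estimate $\F_{\lambda_n}(E_n)-\F_{\lambda_n}(A_{\lambda_n})\ges \|(\psi_n,\varphi_n)\|_{L^2}^2 \ges |E_n\Delta A_{\lambda_n}|^2$, again contradicting the assumption. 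The main technical obstacle will be this last step: performing the Fuglede expansion around the annulus, correctly handling the coupling of $\psi_n$ and $\varphi_n$ through the nonlinear volume constraint, and checking that the spectral gap of the second variation modulo translations does not degenerate as $\lambda\to\bar\lambda^-$, when the ball becomes an additional competing minimizer.
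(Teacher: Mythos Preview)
Your argument for \eqref{quantWlambdabig} is exactly the paper's. For \eqref{quantWlambdasmall} your contradiction/compactness setup is also the same, but the core of the annulus case differs genuinely from the paper. The paper does \emph{not} expand $\F_\lambda$ around the optimal annulus $A_{\lambda_n}$. Instead it decouples the two boundary components: it writes $F_n$ and $G_n$ as graphs over balls $B_{\widetilde R_1^n}$, $B_{\widetilde R_2^n}(x_n)$ whose radii are chosen so that $|B_{\widetilde R_1^n}|=|F_n|$ and $|B_{\widetilde R_2^n}|=|G_n|$. Each graph then satisfies \eqref{volumeconstraint} and \eqref{barycenter} separately, so the already-proved single-circle estimate \eqref{quantWillspher} applies directly and controls $|F_n\Delta B_{\widetilde R_1^n}|^2+|G_n\Delta B_{\widetilde R_2^n}(x_n)|^2$ as well as $\sup|\phi_n|,\sup|\psi_n|$. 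The mismatch $|\widetilde R_2^n-r_{\lambda_n}|^2$ between these radii and the optimal ones is then controlled by the one-variable strict convexity $f_{\lambda_n}''>0$ from Lemma \ref{rem:annuli}. A final geometric step produces an admissible optimal annulus $B_{R_1^n}\setminus B_{R_2^n}(\widehat x_n)$ close to $B_{\widetilde R_1^n}\setminus B_{\widetilde R_2^n}(x_n)$. This modular route completely avoids computing the constrained second variation of $\F_\lambda$ at $A_\lambda$ and reuses \eqref{quantWillspher} as a black box; your route is more direct but requires carrying out that new Fuglede computation with the coupled volume constraint and the Lagrange multiplier.

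One point in your plan needs correction. The kernel of the second variation at the annulus is four-dimensional, not two: since $\F_\lambda(B_R(y_1)\setminus B_r(y_2))$ is independent of both centers, the $k=\pm 1$ modes on the outer and on the inner circle are \emph{separately} zero modes. Invoking only the global translation from Lemma \ref{lem:conv-Alambda} kills two of these four directions; the remaining two (relative displacement of the inner circle) must be absorbed by the freedom in choosing the inner center of the comparison annulus $\Omega$ in $\min_\Omega|E_n\Delta\Omega|^2$. Once you incorporate this, the Lagrangian second variation at mode $k$ becomes $R^{-1}(k^4-\tfrac{5}{2}k^2+\tfrac{3}{2})+\tfrac{\lambda R}{2}(k^2-1)$ on the outer circle (and analogously on the inner), which is indeed strictly positive for $|k|\ge 2$ and vanishes for $|k|=1$, while the $k=0$ direction subject to the linearized constraint yields precisely $f_\lambda''(r_\lambda)>0$; so your spectral-gap claim goes through uniformly on $[\lambda_*,\bar\lambda]$ without degeneration at $\bar\lambda$.
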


\begin{proof}
 Let us start by proving \eqref{quantWlambdabig}. Since $\lambda-\bar \lambda >0$,  using the minimality of $B$ for $\F_{\bar \lambda}$ and the quantitative isoperimetric inequality \cite{FuMaPra}, 
 we can write that 
 \begin{align*}
  \F_{\lambda}(E)-\F_{\lambda}(B_1)&=\F_{\bar \lambda} (E)-\F_{\bar \lambda} (B_1)+(\lambda-\bar \lambda) \big(P(E)-P(B_1)\big)\\
  &\ge (\lambda-\bar \lambda) \big(P(E)-P(B_1)\big)\\
  &\geq  c_2(\lambda-\bar \lambda) \min_x |E\Delta B_1(x)|^2,
 \end{align*}
which proves \eqref{quantWlambdabig}.\\

We now turn to the proof of \eqref{quantWlambdasmall}. As for the proof of \eqref{quantWill}, we assume by contradiction that the inequality does not hold. We thus have sequences $(\lambda_n)_n$ in $[\lambda_*,\bar\lambda]$ and $(E_n)_n$ in $\M(|B_1|)$ with 
\begin{equation}\label{hypabsurdFlambda}
 \lim_{n\to \infty} \frac{\F_{\lambda_n}(E_n)-\min_{\M(|B_1|)}\F_{\lambda_n}}{\min_\Omega |E_n\Delta \Omega|^2}=0.
\end{equation}
Again, implicit constants in $\les$ and $\ges$ estimates may depend on the fixed sequence $(E_n)_n$ but are independent of $n\in\N$.\\
Since the denominator in \eqref{hypabsurdFlambda} is uniformly bounded we have $\lim_{n\to \infty} \F_{\lambda_n}(E_n)-\min_{\M(|B_1|)}\F_{\lambda_n}=0$. Without loss of generality we can also assume $\lambda_n\to\lambda \in [\lambda_*,\bar\lambda]$ as $n\to\infty$. 

By Lemma \ref{lem:conv-Alambda} $E_n$ converges strongly in $W^{2,2}$ to a minimizer of $\F_\lambda$. For $n$ large enough, it is connected, and has either genus one or zero (the latter being possible only if $\lambda=\bar\lambda$). 

 Let us first consider the case where up to a translation and up to passing to a subsequence, $E_n$ converges to $B_{R_1}\backslash B_{R_2}(x_2)$ so that for $n$ large enough 
$E_n=F_n\setminus G_n$ for some simply connected sets $G_n\subset F_n$. Up to a translation, we may assume that $\dashint_{F_n} x=0$ so that by Lemma  \ref{lem:conv-Alambda}, we can write $F_n$ as a graph over the ball of radius $\widetilde R_1^n$ with $|B_{\widetilde R_1^n}|=|F_n|$,  and such that $\widetilde R_1^n\to R_1$.
At the same time $G_n$ can be written as a graph over $B_{\widetilde R_2^n}(x_n)$ with $|B_{\widetilde R_2^n}|=|G_n|$, $\dashint_{G_n} x=x_n$,  $\widetilde R_2^n \to R_2$ and $x_n\to x_2$ as $n\to \infty$. Hence,
\begin{equation*}
   \partial F_n=\{ \widetilde R_1^n (1+\phi_n)e^{i\theta}, \ \theta\in [0,2\pi)\}, \quad \partial G_n=\{ x_n+ \widetilde R_2^n(1+\psi_n)e^{i\theta}, \ \theta\in[0,2\pi)\}
\end{equation*}
for some functions $\phi_n$, $\psi_n$ with small $W^{2,2}$ norm that satisfy \eqref{volumeconstraint} and \eqref{barycenter}. Let us point out that $B_{\widetilde{R}_1^n}\backslash B_{\widetilde{R}_2^n}$ is in general not an optimal annulus
and is thus in particular not admissible for the denominator of \eqref{hypabsurdFlambda}.\\
By \eqref{quantWillspher} and the Sobolev inequality, we obtain that
\begin{align}
\F_{\lambda_n}(E_n)-\F_{\lambda_n}(B_{\widetilde R_1^n}\backslash B_{\widetilde R_2^n})&\geq 
	W(F_n)-W(B_{\widetilde R_1^n}) + W(G_n)-W(B_{\widetilde R_2^n}) \notag\\
	&\ges  \sup |\phi_n|^2+ \sup|\psi_n|^2+ |F_n\Delta B_{\widetilde R_1^n}|^2+ |G_n\Delta B_{\widetilde R_2^n}(x_n)|^2.
	\label{eq:estimFlambda1}
\end{align}
If $B_{\widetilde{R}_2^n}(x_n)\not\subset B_{\widetilde R_1^n}$, we   need to move $x_n$ inwards to obtain an annulus. To this aim, let 
\begin{equation}\label{defdeltan}
  \delta_n=\max(|x_n|+\widetilde R_2^n-\widetilde R_1^n,0)
\end{equation}
and define $\widetilde{x}_n= x_n- \delta_n \frac{x_n}{|x_n|}$ 
(notice that if $\delta_n>0$ then $x_n\neq 0$ so that taking as a convention that $\widetilde{x}_n=0$ if $x_n=0$, this quantity is well defined). We then have $B_{\widetilde R_2^n}(\widetilde{x}_n)\subset B_{\widetilde R_1^n}$ by \eqref{defdeltan}. Since 
\[B_{\widetilde{R}_2^n(1-\sup |\psi_n|)}(x_n)\subset G_n\subset F_n\subset B_{\widetilde{R}^n_1(1+\sup |\phi_n|)},\] we must have $|x_n|+\widetilde{R}_2^n(1-\sup |\psi_n|)\le \widetilde{R}^n_1(1+\sup |\phi_n|)$ that is
\begin{equation}\label{bounddelta}
 \delta_n^2\le \lt(\widetilde R_1^n \sup |\phi_n|+\widetilde R_2^n\sup |\psi_n|\rt)^2\stackrel{\eqref{eq:estimFlambda1}}{\les} \F_{\lambda_n}(E_n)-\F_{\lambda_n}(B_{\widetilde R_1^n}\backslash B_{\widetilde R_2^n}).
\end{equation}
From this we deduce that 
\begin{equation}\label{estimdeltaV}
 |G_n\Delta B_{\widetilde R_2^n}(\widetilde{x}_n)|^2\les |G_n\Delta B_{\widetilde R_2^n}(x_n)|^2+\delta_n^2 \stackrel{\eqref{eq:estimFlambda1}, \eqref{bounddelta}	}{\les}\F_{\lambda_n}(E_n)-\F_{\lambda_n}(B_{\widetilde R_1^n}\backslash B_{\widetilde R_2^n}).
\end{equation}
We can now estimate
\begin{align*}
 \F_{\lambda_n}(E_n)-\F_{\lambda_n}(A_n)
 & =\F_{\lambda_n}(E_n)-\F_{\lambda_n}(B_{\widetilde R_1^n}\backslash B_{\widetilde R_2^n})+\F_{\lambda_n}(B_{\widetilde R_1^n}\backslash B_{\widetilde R_2^n})-\F_{\lambda_n}(A_n)\\
 & \stackrel{\mathmakebox[\widthof{=}]{\eqref{eq:estimFlambda1},\eqref{estimdeltaV}}}{\ges} 
 |F_n\Delta B_{\widetilde{R}^n_1}|^2+|G_n\Delta B_{\widetilde R_2^n(\widetilde{x}_n)}|^2+\F_{\lambda_n}(B_{\widetilde R_1^n}\backslash B_{\widetilde R_2^n})-\F_{\lambda_n}(A_n)\\
 &\ges |E_n\Delta (B_{\widetilde R_1^n}\backslash B_{\widetilde R_2^n}(\widetilde{x}_n))|^2 +\F_{\lambda_n}(B_{\widetilde R_1^n}\backslash B_{\widetilde R_2^n})-\F_{\lambda_n}(A_n).
\end{align*}
By the minimality of $f_{\lambda_n}$ 
at $r=R_2^n$ and by strict convexity of $f_{\lambda_n}$ (see Lemma \ref{rem:annuli}), $f_{\lambda_n}''(R_2^n)>0$  and thus
\begin{equation*}
   \F_{\lambda_n}(B_{\widetilde R_1^n}\backslash B_{\widetilde R_2^n})-\F_{\lambda_n}(A_n)\ges c(\lambda) (\widetilde R_2^n-R_2^n)^2.
\end{equation*}
We therefore conclude that 
\begin{equation}\label{eq:estimFlambda2}
 \F_{\lambda_n}(E_n)-\F_{\lambda_n}(A_n)\ges |E_n \Delta (B_{\widetilde R_1^n}\backslash B_{\widetilde R_2^n}(\widetilde{x}_n))|^2+ |\widetilde R_2^n-R_2^n|^2.
\end{equation}
As pointed out above, this is not sufficient to obtain a contradiction with \eqref{hypabsurdFlambda} since $B_{\widetilde{R}_1^n}\backslash B_{\widetilde{R}_2^n}(\widetilde x_n)$ is not an optimal annulus for $\F_{\lambda_n}$. We thus need to prove
that there exists $\widehat{x}_n$ close to $\widetilde{x}_n$ such that \eqref{eq:estimFlambda2} holds with $B_{R_1^n}\backslash B_{R_2^n}(\widehat x_n)$ instead of $B_{\widetilde R_1^n}\backslash B_{\widetilde R_2^n}(\widetilde{x}_n)$.

For $\eps_n\ge0$, we let $\widehat{x}_n=(1-\eps_n)\widetilde{x}_n$. We want to choose $\eps_n$ so that $B_{R_2^n}(\widehat{x}_n)\subset B_{R_1^n}$. 
If $|\widetilde{x}_n|\ll 1$, we set $\eps_n=0$ while for $|\widetilde{x}_n|\ges 1$, we claim that we can take 
\begin{equation}\label{eq:estimepsn}
 \eps_n=C |\widetilde R_2^n-R_2^n|
\end{equation}
for some constant $C\les 1$.
Indeed, if $\eps_n$ satisfies \eqref{eq:estimepsn}, then using first that by \eqref{defdeltan} $|\widetilde{x}_n|+\widetilde R_2^n\le \widetilde R_1^n$ and then that  $|\widetilde R_1^n-R_1^n|\sim |\widetilde R_2^n-R_2^n|$ 
(since $B_{\widetilde R_1^n}\setminus B_{\widetilde R_2^n}$ and $B_{R_1^n}\setminus B_{R_2^n}$ have equal mass) we obtain
\begin{align*}
 |\widehat{x}_n|+ R_2^n&= (1-\eps_n)|\widetilde{x}_n|+ \widetilde R_2^n + (R_2^n-\widetilde R_2^n)\\
 &\le \widetilde R_1^n-(C|\widetilde{x}_n|-1) |\widetilde R_2^n-R_2^n|\\
 &=R_1^n + (\widetilde R_1^n-R_1^n)-(C|\widetilde{x}_n|-1) |\widetilde R_2^n-R_2^n|\\
 &\le R_1^n -(C|\widetilde{x}_n|-c)|\widetilde R_2^n-R_2^n|\\
 &\stackrel{|\widetilde{x}_n|\ges 1}{\le} R_1^n,
\end{align*}
 and therefore $B_{R_2^n}(\widehat{x}_n)\subset B_{R_1^n}$. From \eqref{eq:estimepsn} we conclude that 
\begin{equation*}
   |(B_{\widetilde R_1^n}\Delta B_{\widetilde R_2^n}(\widetilde{x}_n))\Delta(B_{R_1^n}\backslash B_{R_2^n}(\widehat{x}_n))|^2 \les |\widetilde R_1^n-R_1^n|^2+|\widetilde R_2^n-R_2^n|^2+|\widetilde{x}_n-\widehat{x}_n|^2\les |\widetilde R_2^n-R_2^n|^2
\end{equation*}
so that \eqref{eq:estimFlambda2} and the fact that $B_{R_1^n}\backslash B_{R_2^n}(\widehat{x}_n)$ is an optimal annulus for $\F_{\lambda_n}$, finally yields
\begin{multline*}
 \F_{\lambda_n}(E_n)-\F_{\lambda_n}(A_n)\\
 \ges |E_n \Delta (B_{\widetilde R_1^n}\backslash B_{\widetilde R_2^n}(\widetilde{x}_n))|^2+ |(B_{\widetilde R_1^n}\backslash B_{\widetilde R_2^n}(\widetilde{x}_n))\Delta(B_{R_1^n}\backslash B_{R_2^n}(\widehat{x}_n))|^2\\
 \ges |E_n \Delta (B_{R_1^n}\backslash B_{R_2^n}(\widehat{x}_n))|^2\ge \min_{\Omega} |E_n\Delta \Omega|^2,
\end{multline*}
which contradicts \eqref{hypabsurdFlambda}.

Let us finally consider the case  $\lambda=\bar\lambda$ with $E_n$ simply connected. We then have $\F_{\lambda_n}(E_n)\to \inf_{\M(|B_1|)}\F_{\bar\lambda}=\F_{\bar\lambda}(B_1)$ and $W(E_n)\ge W(B_1) $ by \cite{BucurHenrot,FeKN16} so that 
the quantitative isoperimetric inequality \cite{FuMaPra} gives a contradiction to \eqref{hypabsurdFlambda} (one could also use \eqref{quantWill}).
\end{proof}

%
%
\section{The planar case: charged drops}\label{sec:planarcharged}
Still considering the planar case $d=2$, we now turn our attention to the variational problem \eqref{problem2d-gen} for arbitrary positive parameters $m$, $\lambda$ and $Q$. 
Our aim is to  understand as much as possible the phase diagram of $\F_{\lambda,Q}$ i.e.~identify regions of existence/non-existence of minimizers and characterize minimizers when they exist. 

By a simple rescaling we have
\begin{equation}
	\min_{\M(m)}\F_{\lambda,Q}(E)\,=\,  \frac{\sqrt \pi}{\sqrt m}\min_{\M(|B_1|)}\F_{\lambda(m),Q(m)}(E), \label{eq:rescale-FlambdaQ}
\end{equation}
with $\lambda(m)=\lambda \frac{m}{\pi}$, $Q(m)=Q\left(\frac{m}{\pi}\right)^{\frac{3+\alpha}{2}}$ so that we may assume that $m=|B_1|$. 
Notice however that, for $\lambda$ and $Q$ fixed, $\lambda(m)$ and $Q(m)$ tend to zero as $m$ goes to zero. Therefore, if we want to understand the shape of minimizers at small volume,
we need to carefully study the phase diagram of $\F_{\lambda,Q}$ close to $(\lambda,Q)=(0,0)$.

\subsection{Minimization in the class of simply connected sets}
Let us start by investigating \eqref{problem2d}, i.e.~restricting ourselves to simply connected sets.
\begin{proposition}\label{prop:simply_connected}
There exists $Q_0>0$ such that for $Q<Q_0$ and all $\lambda\geq 0$, balls are the only solutions of  the minimization problem
\begin{equation*}
	\min_{\M_{sc}(|B_1|)}\F_{\lambda,Q}(E).
\end{equation*} 
\end{proposition}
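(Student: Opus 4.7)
The plan is to argue by contradiction. Suppose that there exist sequences $Q_n \to 0^+$, $\lambda_n \geq 0$, and simply connected minimizers $E_n \in \M_{sc}(|B_1|)$ of $\F_{\lambda_n, Q_n}$ with $E_n$ not a translate of $B_1$. Three standard facts—the isoperimetric inequality $P(E) \geq P(B_1)$, the Bucur--Henrot/Ferone--Kawohl--Nitsch bound $W(E) \geq W(B_1)$ on $\M_{sc}(|B_1|)$ from \cite{BucurHenrot,FeKN16}, and the Riesz rearrangement inequality $V_\alpha(E) \leq V_\alpha(B_1)$—combined with the minimality $\F_{\lambda_n, Q_n}(E_n) \leq \F_{\lambda_n, Q_n}(B_1)$ yield the key estimate
\begin{equation*}
	0 \,\le\, W(E_n) - W(B_1) \,\le\, Q_n\big(V_\alpha(B_1) - V_\alpha(E_n)\big) \,\le\, Q_n V_\alpha(B_1) \,\to\, 0.
\end{equation*}

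For $n$ large enough the Willmore deficit drops below the threshold $\delta_0$ from Lemma \ref{lem:quantWill}, which in turn supplies a uniform perimeter and hence a uniform diameter bound for $E_n$. Lemma \ref{lem:quantWill2} then upgrades this to strong $W^{2,2}$-convergence of $\partial E_n$ to $\partial B_1$ after a suitable translation, so that $\partial E_n = \{(1+\phi_n(\theta))e^{i\theta} : \theta\in[0,2\pi)\}$ for some $\phi_n$ with $\|\phi_n\|_{W^{2,2}} \to 0$. A further translation places the barycenter of $E_n$ at the origin, so that both the volume constraint \eqref{volumeconstraint} and the centering condition \eqref{barycenter} hold; the Fuglede expansion \eqref{quantWillspher} then yields the quadratic lower bound
\begin{equation*}
	W(E_n) - W(B_1) \,\gtrsim\, \|\phi_n\|_{W^{2,2}}^2.
\end{equation*}

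The crucial complementary ingredient is a matching quadratic upper bound on the Riesz deficit. Since $B_1$ maximizes $V_\alpha$ at fixed volume and the first variation of $V_\alpha$ at $B_1$ vanishes on volume-preserving perturbations (the Riesz potential of $B_1$ being constant on $\partial B_1$), a Fuglede-type second-order expansion as developed in \cite{KnMu13} yields, for nearly spherical sets satisfying \eqref{volumeconstraint}--\eqref{barycenter},
\begin{equation*}
	V_\alpha(B_1) - V_\alpha(E_n) \,\lesssim\, \|\phi_n\|_{L^2}^2 + o(\|\phi_n\|_{W^{2,2}}^2).
\end{equation*}
Inserting both expansions and discarding the nonnegative term $\lambda_n(P(E_n)-P(B_1))$ gives
\begin{equation*}
	\F_{\lambda_n, Q_n}(E_n) - \F_{\lambda_n, Q_n}(B_1) \,\gtrsim\, (1 - C Q_n)\,\|\phi_n\|_{W^{2,2}}^2 + o(\|\phi_n\|_{W^{2,2}}^2),
\end{equation*}
which is strictly positive for $n$ large unless $\phi_n \equiv 0$, contradicting the assumption that $E_n$ is a minimizer distinct from $B_1$. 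The main technical point is obtaining the Riesz expansion above with a constant independent of $\lambda_n$ and $Q_n$; while this is morally a consequence of the smoothness of $V_\alpha$ under domain variations together with the criticality of $B_1$, its careful justification is exactly the content of \cite{KnMu13}, which the present argument combines with the quantitative Willmore estimate (Theorem \ref{thm:quantWill}) to make the Willmore coercivity absorb the Riesz perturbation for small $Q$.
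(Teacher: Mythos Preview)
Your proof is correct, but it takes a somewhat different route from the paper's.

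The paper's argument is direct rather than by contradiction: it chains two \emph{global} inequalities as black boxes. From \cite{KnMu13} one has $P(E)-P(B_1)\geq Q_1\big(V_\alpha(B_1)-V_\alpha(E)\big)$ for \emph{all} $E\in\M(|B_1|)$, and from \eqref{quantWill2} in Theorem~\ref{thm:quantWill} one has $W(E)-W(B_1)\geq \frac{c_1}{P(B_1)}\big(P(E)-P(B_1)\big)$ whenever the Willmore deficit is below $\delta_0$. Combining these with the minimality inequality $\F_{\lambda,Q}(E)\leq \F_{\lambda,Q}(B_1)$ immediately forces $P(E)=P(B_1)$ (hence $E=B_1$) as soon as $Q<\min\{\delta_0/V_\alpha(B_1),\,c_1 Q_1/P(B_1)\}$. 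No compactness, no contradiction, no explicit nearly-spherical reduction.

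Your approach instead unpacks both ingredients at the level of second-order Fuglede expansions: you reduce by contradiction to nearly spherical sets, invoke \eqref{quantWillspher} for the Willmore lower bound, and a Taylor expansion of $V_\alpha$ (the key step inside \cite{KnMu13}) for the Riesz upper bound, then compare coefficients. This is legitimate and makes the absorption mechanism transparent, but it is longer and reproves part of \cite{KnMu13} rather than citing its conclusion. The paper's route through the perimeter as an intermediate quantity is more modular and gives an explicit $Q_0$; yours gives the same result with a non-explicit threshold coming from the contradiction.
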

\begin{proof}
Consider $\delta_0,c_1>0$ from Theorem \ref{thm:quantWill}. By \cite[Proposition 7.1]{KnMu13} (see also \cite[Theorem 1.3]{F2M3}) there exists $Q_1$ such that
\begin{equation}\label{eq:KnMu13}
	P(E)-P(B_1) \geq Q_1\big(V_\alpha(B_1)-V_\alpha(E)\big)\quad\text{ for all }E\in \M(|B_1|).
\end{equation}
Consider now any $E\in \M_{sc}(|B_1|)$ with $\F_{\lambda,Q}(E)\leq\F_{\lambda,Q}(B_1)$
Hence 
\begin{align}\label{estimW}
	W(E)-W(B_1) &\leq  Q \big(V_\alpha(B_1)-V_\alpha(E)\big) -\lambda \big(P(E)-P(B_1)\big)\\
	&\leq QV_\alpha(B_1)\,<\,\delta_0 \notag
\end{align}
for all $Q< \delta_0 V_\alpha(B_1)^{-1}$.Therefore, if $Q<\delta_0 V_\alpha(B_1)^{-1}$, \eqref{quantWill2} applies and 
\[
 c_1\frac{P(E)-P(B_1)}{P(B_1)}\le W(E)-W(B_1).
\]
Together with \eqref{estimW} we get that for all $Q<\delta_0 V_\alpha(B_1)^{-1}$
\[
 P(E) - P(B_1) \le  Q \Big(\frac{c_1}{P(B_1)}+\lambda\Big)^{-1}\big(  V_\alpha(E)- V_\alpha(B_1)\big),
\]
which combined with \eqref{eq:KnMu13} gives $P(E)=P(B_1)$ for $Q<Q_1 (\frac{c_1}{P(B_1)}+\lambda)$. This implies that $E=B_1$. Choosing $Q_0:=\min\{\delta_0 V_\alpha(B_1)^{-1},Q_1 \frac{c_1}{P(B_1)}\}$ concludes the proof of the proposition. 
\end{proof}

\begin{remark}
 Since for $d=2$ a bound on the perimeter gives a bound on the diameter for simply connected sets, existence of a minimizer for \eqref{problem2d} holds 
 for every $\lambda>0$ and $Q>0$. For $\lambda=0$, the existence of minimizers for large $Q$ is less clear. 
\end{remark}
As a direct consequence of  \eqref{eq:rescale-FlambdaQ} and Proposition \ref{prop:simply_connected} we get
\begin{corollary}\label{cor:mass-rescale}
Let $Q_0$ be given by Proposition \ref{prop:simply_connected}. For any $Q>0$, $\lambda\geq 0$ and  $m\le\pi\lt(\frac{Q_0}{Q}\rt)^{\frac{2}{3+\alpha}}$  balls are the only minimizers of
\begin{equation*}
 \min_{\calM_{sc}(m)} \F_{\lambda,Q}(E).
\end{equation*}
\end{corollary}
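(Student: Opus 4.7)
The plan is to reduce the corollary to Proposition \ref{prop:simply_connected} via the scaling identity \eqref{eq:rescale-FlambdaQ}. First I would observe that the map $E\mapsto \widetilde E := \sqrt{\pi/m}\,E$ is a bijection from $\M_{sc}(m)$ onto $\M_{sc}(|B_1|)$ which sends balls to balls, and which by \eqref{eq:rescale-FlambdaQ} intertwines the minimization of $\F_{\lambda,Q}$ over $\M_{sc}(m)$ with the minimization of $\F_{\lambda(m),Q(m)}$ over $\M_{sc}(|B_1|)$, up to the positive multiplicative factor $\sqrt{\pi/m}$. In particular, $E$ is a minimizer of the former problem if and only if $\widetilde E$ is a minimizer of the latter, so it suffices to identify the minimizers of the rescaled problem.

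Next, I would verify that the hypothesis on $m$ translates exactly into the smallness condition required by Proposition \ref{prop:simply_connected}. Using $Q(m)=Q(m/\pi)^{(3+\alpha)/2}$, the assumption $m\le\pi\lt(\frac{Q_0}{Q}\rt)^{\frac{2}{3+\alpha}}$ is equivalent to $Q(m)\le Q_0$. Proposition \ref{prop:simply_connected} then yields that $B_1$ is the unique minimizer of $\F_{\lambda(m),Q(m)}$ in $\M_{sc}(|B_1|)$ (shrinking $Q_0$ by an arbitrarily small amount if one wishes to be pedantic about the boundary case $Q(m)=Q_0$, where the proposition as stated asks for strict inequality). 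Rescaling back by $E=\sqrt{m/\pi}\,\widetilde E$, balls are the unique minimizers of $\F_{\lambda,Q}$ in $\M_{sc}(m)$.

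There is no genuine obstacle here: the statement is purely a bookkeeping consequence of the fact that in the plane the perimeter scales as $r$, the elastica energy as $r^{-1}$, and the Riesz energy as $r^{2+\alpha}$, which together with $r=\sqrt{\pi/m}$ explain why the effective non-local strength degenerates like $m^{(3+\alpha)/2}$ as $m\to 0$. The only conceptual point worth flagging is that the corollary gives, for each fixed $Q$ and $\lambda$, a \emph{quantitative} range of small volumes for which balls are the unique simply connected minimizers; this will be useful later when interpreting the phase diagram at small mass.
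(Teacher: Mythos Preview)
Your proof is correct and follows exactly the approach indicated in the paper, which merely states that the corollary is a direct consequence of the rescaling identity \eqref{eq:rescale-FlambdaQ} and Proposition~\ref{prop:simply_connected}. Your observation about the boundary case $Q(m)=Q_0$ versus the strict inequality $Q<Q_0$ in Proposition~\ref{prop:simply_connected} is a fair (if minor) caveat that the paper does not address.
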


\subsection{Minimization in the class $\M(|B_1|)$.}

We now drop the constraint that $E$ is simply connected and study \eqref{problem2d-gen}. We start by focusing on the simplest part of the phase diagram, that is where
minimizers are balls. As above, let $Q_1>0$ be given by \cite{KnMu13,F2M3} such that balls are the only minimizers of 
\[
 \min_{\M(|B_1|)} P(E)+Q V_\alpha(E).
\]

\begin{proposition}[Global minimality of the ball for $\lambda> \bar \lambda$]
\label{prop:stabball}
 For every $\lambda>\bar \lambda$ and $Q\le Q_1(\lambda-\bar \lambda)$  balls are the unique minimizers of $\F_{\lambda, Q}$ in $\calM(|B_1|)$.
\end{proposition}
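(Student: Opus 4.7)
The plan is to decompose the functional so that both summands are separately minimized by $B_1$, and to exploit that one of the two inequalities is strict unless $E$ is a ball. Concretely, for any $E\in \M(|B_1|)$ I would write
\begin{equation*}
	\F_{\lambda,Q}(E)\,=\,\F_{\bar\lambda,0}(E) \,+\, (\lambda-\bar\lambda)P(E) \,+\, QV_\alpha(E),
\end{equation*}
and treat the two pieces $\F_{\bar\lambda,0}(E)$ and $(\lambda-\bar\lambda)P(E)+QV_\alpha(E)$ separately.

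For the first piece, Theorem \ref{theo:minFlambda} applied at $\lambda=\bar\lambda$ gives
\begin{equation*}
	\F_{\bar\lambda,0}(E)\,\geq\, \F_{\bar\lambda,0}(B_1),
\end{equation*}
with equality allowed for balls or for the annulus of inner radius $r_{\bar\lambda}$, but that ambiguity is harmless. For the second piece, since $\lambda>\bar\lambda$ and $Q/(\lambda-\bar\lambda)\leq Q_1$, I would factor
\begin{equation*}
	(\lambda-\bar\lambda)P(E)+QV_\alpha(E)\,=\,(\lambda-\bar\lambda)\Bigl[P(E)+\tfrac{Q}{\lambda-\bar\lambda}V_\alpha(E)\Bigr]
\end{equation*}
and invoke the result of \cite{KnMu13} (cf.\ also \cite{F2M3}) recalled just before the proposition, namely that for any $\tilde Q\in[0,Q_1]$ the ball $B_1$ is the unique minimizer of $P+\tilde Q V_\alpha$ on $\M(|B_1|)$. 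This yields
\begin{equation*}
	(\lambda-\bar\lambda)P(E)+QV_\alpha(E)\,\geq\,(\lambda-\bar\lambda)P(B_1)+QV_\alpha(B_1),
\end{equation*}
with equality if and only if $E$ is a translate of $B_1$.

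Summing the two inequalities produces $\F_{\lambda,Q}(E)\geq\F_{\lambda,Q}(B_1)$, and the strict inequality in the second line whenever $E\neq B_1$ (up to translation) gives uniqueness. There is no real technical obstacle here; the only thing to notice is that the minimality at the threshold $\bar\lambda$ is shared between balls and a centered annulus, so the uniqueness must come from the $P+\tilde Q V_\alpha$ term, which is exactly why one writes the splitting with the factor $\lambda-\bar\lambda$ out front.
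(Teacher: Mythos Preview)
Your proof is correct and follows essentially the same approach as the paper: both split $\F_{\lambda,Q}=\F_{\bar\lambda}+(\lambda-\bar\lambda)\bigl[P+\tfrac{Q}{\lambda-\bar\lambda}V_\alpha\bigr]$, use Theorem~\ref{theo:minFlambda} for the first term and the result of \cite{KnMu13} for the second, with uniqueness coming from the latter. Your observation that uniqueness must be extracted from the $P+\tilde Q V_\alpha$ piece (since at $\bar\lambda$ the annulus also minimizes $\F_{\bar\lambda}$) is exactly the point.
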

\begin{proof}
For $\lambda>\bar \lambda$ and $Q\le Q_1(\lambda-\bar \lambda)$, we have
\[
 \F_{\lambda,Q}(E) = (\lambda-\bar \lambda) P(E)+Q V_\alpha(E) +\F_{\bar \lambda}(E)=  (\lambda-\bar \lambda) \lt(P(E)+ \frac{Q}{\lambda-\bar \lambda}V_\alpha(E)\rt) +\F_{\bar \lambda}(E).
\]
By definition of $Q_1$, balls are the only minimizers of $P(E)+ \frac{Q}{\lambda-\bar \lambda}V_\alpha(E)$. Since by Theorem~\ref{theo:minFlambda} they also minimize $\F_{\bar \lambda}$, 
balls are the only minimizers of $\F_{\lambda,Q}$. 
\end{proof}
\begin{remark}
 Notice that if balls are minimizers of $\F_{\lambda,Q}$ then by the isoperimetric inequality they are also minimizers  of $\F_{\lambda',Q}$ for every $\lambda'\ge \lambda$.
\end{remark}

We now focus on the most interesting case and show that for $\lambda\le\bar\lambda$ and $Q$ sufficiently small centered annuli are optimal.
Our first observation is that among annuli of the form $B_{R_1}\backslash B_{R_2}(x_2)$, the Riesz interaction energy is minimized for the centered annulus i.e.~for $x_2=0$.
\begin{lemma}\label{lem:RieszA}
 For every $x_2\in\R^2$ such that $B_{R_2}(x_2)\subset B_{R_1}$, it holds
 \begin{equation*}
  V_\alpha(B_{R_1}\backslash B_{R_2}(x_2))\ge V_\alpha(B_{R_1}\backslash B_{R_2}),
 \end{equation*}
with equality if and only if $x_2=0$. Moreover $V_\alpha(B_{R_1}\backslash B_{R_2}(x_2))\le V_\alpha(B_1)$.
\end{lemma}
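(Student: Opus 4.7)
The plan is to reduce both estimates to standard properties of Riesz potentials of balls. Writing $\mathbf{1}_A = \mathbf{1}_{B_{R_1}} - \mathbf{1}_{B_{R_2}(x_2)}$ for $A := B_{R_1}\setminus B_{R_2}(x_2)$, expanding the double integral defining $V_\alpha$, and using translation invariance, one obtains
\begin{equation*}
  V_\alpha(A) \,=\, V_\alpha(B_{R_1}) + V_\alpha(B_{R_2}) - 2\,I(x_2),\qquad I(x_2)\,:=\,\int_{B_{R_2}(x_2)} u(x)\,dx,
\end{equation*}
where $u(x):=\int_{B_{R_1}} |x-y|^{\alpha-2}\,dy$ is the Riesz potential of $B_{R_1}$. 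Since only $I$ depends on $x_2$, the first inequality reduces to showing that $I$ is strictly maximised at $x_2 = 0$.

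The key input is the classical fact that the Riesz potential of a centered ball is radial and strictly decreasing in $|x|$. Hence every superlevel set $\{u>t\}$ is a centered open ball $B_{\rho(t)}$, and the layer cake formula yields $I(x_2)=\int_0^\infty |B_{\rho(t)}\cap B_{R_2}(x_2)|\,dt$. For each $t>0$,
\begin{equation*}
  |B_{\rho(t)}\cap B_{R_2}(x_2)| \,\le\, \min\bigl(|B_{\rho(t)}|,|B_{R_2}|\bigr) \,=\, |B_{\rho(t)}\cap B_{R_2}|,
\end{equation*}
the last equality using that $B_{\rho(t)}$ and $B_{R_2}$ are concentric; integrating in $t$ gives $I(x_2)\le I(0)$. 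Strict inequality for $x_2\neq 0$ follows because for $t$ in the range where $R_2-|x_2|<\rho(t)<R_2$, the ball $B_{\rho(t)}$ is contained in $B_{R_2}$ but not in $B_{R_2}(x_2)$, producing a strict gap on a set of $t$'s of positive Lebesgue measure.

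For the final inequality, I use that $|A|=\pi(R_1^2-R_2^2)=|B_1|$ (the mass normalisation inherited from the class $\calM(|B_1|)$). The Riesz rearrangement inequality then gives $V_\alpha(A)\le V_\alpha(A^*)=V_\alpha(B_1)$, where $A^*$ denotes the symmetric decreasing rearrangement. No step presents a genuine obstacle; the only delicate point is the strict radial monotonicity of $u$, which can either be invoked from standard references on Riesz potentials or derived directly through a reflection argument showing that the convolution of a radial indicator with a strictly radially decreasing kernel is itself strictly radially decreasing.
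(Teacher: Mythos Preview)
Your proof is correct. Both you and the paper start from the same bilinear expansion
\[
  V_\alpha(B_{R_1}\setminus B_{R_2}(x_2)) = V_\alpha(B_{R_1}) + V_\alpha(B_{R_2}) - 2I(x_2),
\]
so the question reduces to showing $I(x_2)\le I(0)$ with equality only at $x_2=0$. The paper dispatches this in one stroke by invoking the Riesz rearrangement inequality on the cross term $\int\int \Chi_{B_{R_1}}(x)\Chi_{B_{R_2}}(y-x_2)|x-y|^{-2+\alpha}\,dx\,dy$, citing the strict equality case from Lieb--Loss. You instead unpack this by hand: writing $I(x_2)=\int_{B_{R_2}(x_2)}u$, using strict radial monotonicity of the potential $u$ to reduce via layer cake to comparing $|B_{\rho(t)}\cap B_{R_2}(x_2)|$ with $|B_{\rho(t)}\cap B_{R_2}|$, and tracking the range of $t$ where the inequality is strict. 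Your route is more self-contained and makes the geometric mechanism transparent, at the cost of needing the strict monotonicity of $u$ (which, as you note, is standard but not entirely trivial). The paper's route is shorter and exports both the inequality and its equality case to a single reference. For the final bound $V_\alpha(A)\le V_\alpha(B_1)$ both arguments coincide.
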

\begin{proof}
Let $\Chi_1=\Chi_{B_{R_1}}$, $\Chi_2=\Chi_{B_{R_2}}$.  By  the Riesz rearrangement inequality we deduce
\begin{align*}
	V_\alpha(B_{R_1}\backslash B_{R_2}(x_2)) \,&=\, \int_{\R^2\times \R^2} \frac{\Chi_1(x)\Chi_1(y)}{|x-y|^{2-\alpha}} + \int_{\R^2\times \R^2} \frac{\Chi_2(x-x_2)\Chi_2(y-x_2)}{|x-y|^{2-\alpha}}\\
	&\qquad\qquad\qquad -2 \int_{\R^2\times \R^2} \frac{\Chi_1(x)\Chi_2(y-x_2)}{|x-y|^{2-\alpha}}\\
	&\geq\, \int_{\R^2\times \R^2} \frac{\Chi_1(x)\Chi_1(y)}{|x-y|^{2-\alpha}} + \int_{\R^2\times \R^2}\frac{ \Chi_2(x)\Chi_2(y)}{|x-y|^{2-\alpha}} \\
	&\qquad\qquad\qquad -2 \int_{\R^2\times \R^2} \frac{\Chi_1(x)\Chi_2(y)}{|x-y|^{2-\alpha}}\\
	&=\, V_\alpha(B_{R_1}\backslash B_{R_2}).
\end{align*}
By \cite[Theorem 3.9]{LiLo01} equality holds if and only if $\Chi_1,\Chi_2$ coincide with their symmetric rearrangement, hence if and only if $x_2=0$. 
The last statement is a direct consequence of the Riesz rearrangement inequality. 
\end{proof}

We will also need the following stability lemma.
\begin{lemma}\label{lem:EQ_to_Alambda}
For any $\lambda_*\in(0,\bar \lambda]$ there exists $C(\lambda_*)>0$ such that for all $\lambda\in [\lambda_*,\bar\lambda]$ and all $Q>0$
the following property holds: Let $E\in\M(|B_1|)$ satisfy
\begin{equation}\label{hypEQ}
	\F_{\lambda,Q}(E)\le \F_{\lambda,Q}(A_\lambda),
\end{equation}
where $A_\lambda$ is the centered annulus that minimizes $\F_\lambda$ in $\M(|B_1|)$.\\ Then, there exists $\Omega\in\M(|B_1|)$ 
which is either a ball or an annulus minimizing $\F_\lambda$, such that
\begin{equation}
	 |E\Delta \Omega| + \big( V_\alpha(\Omega)-V_\alpha( A_\lambda)\big)   \,\leq\, C(\lambda_*)Q. \label{eq:lem3.6}
 \end{equation}
 Moreover, for $Q$ small enough $\Omega$ must be an annulus minimizing $\F_\lambda$. 
\end{lemma}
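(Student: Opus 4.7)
The overall strategy is to convert the hypothesis on $\F_{\lambda,Q}$ into a bound on the $\F_\lambda$-deficit of $E$, invoke the stability estimate of Theorem \ref{thm:stab} to locate $E$ near a minimizer $\Omega^*$ of $\F_\lambda$, identify $\Omega^*$ as a translated annulus (so that $V_\alpha(\Omega^*)=V_\alpha(A_\lambda)$ by Lemma \ref{lem:RieszA}), and finally upgrade the resulting $Q^{1/2}$ proximity to the claimed linear $Q$ bound via Lipschitz continuity of $V_\alpha$ on sets of bounded area.

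First, I would rewrite the hypothesis as
\begin{equation*}
\F_\lambda(E)-\F_\lambda(A_\lambda)\,\le\, Q\bigl(V_\alpha(A_\lambda)-V_\alpha(E)\bigr)\,\le\, QV_\alpha(B_1),
\end{equation*}
using $V_\alpha(E)\ge 0$ together with $V_\alpha(A_\lambda)\le V_\alpha(B_1)$ from Lemma \ref{lem:RieszA}. The quantitative stability \eqref{quantWlambdasmall} then furnishes a minimizer $\Omega^*\in\M(|B_1|)$ of $\F_\lambda$, closest to $E$ in the $L^1$ sense, with $|E\Delta\Omega^*|\le C(\lambda_*)Q^{1/2}$.

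The next step is to show that, for $Q$ below a threshold depending only on $\lambda_*$, $\Omega^*$ is a translate of $A_\lambda$. For $\lambda<\bar\lambda$ this is automatic by Theorem \ref{theo:minFlambda}. In the borderline case $\lambda=\bar\lambda$, assume instead that $\Omega^*=B_1(x_0)$ and use the Lipschitz bound $|V_\alpha(E)-V_\alpha(\Omega^*)|\le C(\alpha)|E\Delta\Omega^*|$, valid on sets of bounded area as a consequence of the pointwise estimate $\|v_F\|_\infty\les|F|^{\alpha/2}$ (with $v_F(x)=\int_F|x-y|^{-(2-\alpha)}dy$, obtained by Riesz rearrangement). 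Combining this with $\F_\lambda(E)\ge\F_\lambda(\Omega^*)=\F_\lambda(A_{\bar\lambda})$ and the hypothesis, one deduces
\begin{equation*}
V_\alpha(B_1)-V_\alpha(A_{\bar\lambda})\,\le\, C(\alpha)|E\Delta\Omega^*|\,\le\, C(\lambda_*)Q^{1/2},
\end{equation*}
contradicting the strict inequality $V_\alpha(B_1)>V_\alpha(A_{\bar\lambda})$ (a consequence of the Riesz rearrangement inequality, since the annulus is not a translated ball) for $Q$ sufficiently small.

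With $\Omega^*$ now identified as a translated annulus, Lemma \ref{lem:RieszA} gives $V_\alpha(\Omega^*)=V_\alpha(A_\lambda)$, so the first displayed inequality refines to
\begin{equation*}
\F_\lambda(E)-\F_\lambda(\Omega^*)\,\le\, Q\bigl(V_\alpha(\Omega^*)-V_\alpha(E)\bigr)\,\le\, C(\alpha)Q\,|E\Delta\Omega^*|.
\end{equation*}
Combining with the quadratic lower bound $c(\lambda_*)|E\Delta\Omega^*|^2\le\F_\lambda(E)-\F_\lambda(\Omega^*)$ from Theorem \ref{thm:stab} yields $|E\Delta\Omega^*|\le C(\lambda_*)Q$, and the contribution $V_\alpha(\Omega^*)-V_\alpha(A_\lambda)$ vanishes in \eqref{eq:lem3.6}. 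For $Q$ above the smallness threshold, \eqref{eq:lem3.6} is trivially satisfied after enlarging $C(\lambda_*)$, using the uniform bound $|E\Delta\Omega^*|+|V_\alpha(\Omega^*)-V_\alpha(A_\lambda)|\le 2|B_1|+V_\alpha(B_1)$. The main obstacle is precisely the identification step in the borderline case $\lambda=\bar\lambda$: balls also minimize $\F_{\bar\lambda}$ and must be ruled out as candidates for the closest minimizer to $E$ when $Q$ is small, which hinges on the strict gap $V_\alpha(B_1)>V_\alpha(A_{\bar\lambda})$ combined with the preliminary $Q^{1/2}$ proximity.
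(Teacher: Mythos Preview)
Your overall strategy is the same as the paper's (stability estimate plus Lipschitz continuity of $V_\alpha$), but there is a genuine error in the identification step. You assume that for $\lambda<\bar\lambda$ the minimizers of $\F_\lambda$ are, up to translation, copies of the \emph{centered} annulus $A_\lambda$, and hence that $V_\alpha(\Omega^*)=V_\alpha(A_\lambda)$. This is false: Theorem~\ref{theo:minFlambda} (see the equality case in its proof) shows that \emph{every} annulus $B_{R_\lambda}(x_1)\setminus B_{r_\lambda}(x_2)$ with $B_{r_\lambda}(x_2)\subset B_{R_\lambda}(x_1)$ minimizes $\F_\lambda$, since $\F_\lambda$ sees only the radii. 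For a non-centered such $\Omega^*$, Lemma~\ref{lem:RieszA} gives $V_\alpha(\Omega^*)>V_\alpha(A_\lambda)$, not equality, so your claim that ``the contribution $V_\alpha(\Omega^*)-V_\alpha(A_\lambda)$ vanishes'' is unjustified and the second half of \eqref{eq:lem3.6} is not proved.

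The repair is easy and in fact simplifies your argument considerably. Lemma~\ref{lem:RieszA} yields $V_\alpha(\Omega)\ge V_\alpha(A_\lambda)$ for \emph{every} minimizer $\Omega$ of $\F_\lambda$ (annulus or ball), so the whole $Q^{1/2}$ bootstrap and the case analysis at $\lambda=\bar\lambda$ are unnecessary. This is exactly what the paper does: from \eqref{hypEQ}, \eqref{quantWlambdasmall}, and Lipschitz continuity one writes
\[
c(\lambda_*)|E\Delta\Omega|^2 \,\le\, \F_\lambda(E)-\F_\lambda(A_\lambda)\,\le\, Q\big(V_\alpha(A_\lambda)-V_\alpha(\Omega)\big)+CQ\,|E\Delta\Omega|.
\]
The first term on the right is $\le 0$, which immediately gives $|E\Delta\Omega|\le C(\lambda_*)Q$; and since the left-hand side is $\ge 0$, one also reads off $V_\alpha(\Omega)-V_\alpha(A_\lambda)\le C|E\Delta\Omega|\le C(\lambda_*)Q$. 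The ``in particular'' statement is then a consequence of \eqref{eq:lem3.6} and the strict gap $V_\alpha(B_1)-V_\alpha(A_\lambda)\ge c(\lambda_*)$, rather than an input to the main estimate.
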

\begin{proof}
Let $\Omega\in\argmin_{\M(|B_1|)} \F_\lambda \cup \{B_1(x):x\in\R^2\}$ satisfy 
\begin{equation*}
	|E\Delta \Omega| \,\leq\,  |E\Delta \Omega'| \quad\text{ for all }\Omega'\in \argmin_{\M(|B_1|)} \F_\lambda\cup \{B_1(x):x\in\R^2\}.
\end{equation*}
Thanks to \eqref{hypEQ}, \eqref{quantWlambdasmall}, and the Lipschitz-continuity of the Riesz interaction energy 
\begin{align*}
   c(\lambda_*)  |E\Delta \Omega|^2 &\leq \F_{\lambda}(E)- \F_{\lambda}(A_\lambda)\\
   &\leq Q\big(V_\alpha(A_\lambda)-V_\alpha(\Omega)\big)+Q\big(V_\alpha(\Omega)-V_\alpha(E)\big)\\
   &\leq Q\big(V_\alpha(A_\lambda)-V_\alpha(\Omega)\big)+C Q |E\Delta \Omega|.
 \end{align*}
By Lemma \ref{lem:RieszA} the first term on the right-hand side is non-positive and we deduce \eqref{eq:lem3.6}.\\
Since $V_\alpha(B_1)-V_\alpha(A_\lambda)\ge c(\lambda_*)$, we also conclude from \eqref{eq:lem3.6} that for $Q$ small enough, $\Omega$ cannot be a ball.
\end{proof}
We can now prove the minimality of the centered annulus in this regime.
\begin{theorem}[Global minimality of the annulus for $\lambda\leq \bar \lambda$]
\label{theo:stabannulus}
For every $0<\lambda_*<\bar \lambda$ there exists $ Q(\lambda_*)$ such that for all $Q< Q(\lambda_*)$ and all $\lambda\in [\lambda_*,\bar \lambda]$, the  minimizers of $\F_{\lambda,Q}$ in $\calM(|B_1|)$ are 
centered annuli $A_{\lambda,Q}$.
Moreover, there exist positive constants $c(\lambda_*), C(\lambda_*)$  depending only on $\lambda_*$ such that the inner radius $r_{\lambda,Q}$ of $A_{\lambda,Q}$ satisfies
\begin{equation}\label{estimdeltar}
  c(\lambda_*)Q \le |r_{\lambda,Q}-r_\lambda|\le C(\lambda_*)Q,
\end{equation}
where $r_\lambda$ is the minimizer of $f_\lambda$ (see Lemma \ref{rem:annuli}).
\end{theorem}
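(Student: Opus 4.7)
The plan is to attack the result in three stages: first show that any minimizer of $\F_{\lambda,Q}$ is topologically an annulus and $W^{2,2}$-close to a centered annulus; second, solve the one-parameter problem of minimizing $\F_{\lambda,Q}$ among centered annuli to obtain the candidate $A_{\lambda,Q}$ and the rate \eqref{estimdeltar}; third, identify any minimizer with $A_{\lambda,Q}$ via a Fuglede-type second-order analysis.

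For the first stage I would use $A_\lambda$ as a trial configuration to obtain $\F_{\lambda,Q}(E)\le \F_\lambda(A_\lambda)+QV_\alpha(A_\lambda)$ for any candidate minimizer, then apply Lemma \ref{lem:EQ_to_Alambda} together with the $W^{2,2}$-compactness argument from Lemma \ref{lem:conv-Alambda} to extract (up to translation) a limit minimizer $E$ which, for $Q<Q(\lambda_*)$, is topologically an annulus. I would write $E=F\setminus G$ with $F,G$ simply connected and parametrize
\begin{equation*}
	\partial F=\{\tilde R_1(1+\phi)e^{i\theta}\},\qquad \partial G=\{x_2+\tilde R_2(1+\psi)e^{i\theta}\},
\end{equation*}
with $\tilde R_1^2-\tilde R_2^2=1$ and small $W^{2,2}$-norms for $\phi,\psi$, subject to the area and barycenter normalizations \eqref{volumeconstraint}--\eqref{barycenter}.

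For the one-parameter reduction I consider the smooth function $h_{\lambda,Q}(r):=f_\lambda(r)+QV_\alpha(A_r)$, with $A_r:=B_{\sqrt{1+r^2}}\setminus B_r$, and check that $\tfrac{d}{dr}V_\alpha(A_r)\big|_{r=r_\lambda}\neq 0$ (enlarging the hole at fixed area strictly decreases the Riesz self-interaction). Combined with $f_\lambda'(r_\lambda)=0$ and $f_\lambda''(r_\lambda)>0$ from Lemma \ref{rem:annuli}, an implicit function argument yields for $Q$ small a unique critical point $r_{\lambda,Q}$ close to $r_\lambda$ with $|r_{\lambda,Q}-r_\lambda|\sim Q$, proving \eqref{estimdeltar} once we know $A_{\lambda,Q}:=A_{r_{\lambda,Q}}$ is the true minimizer. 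To confirm this, I would expand $\F_{\lambda,Q}(E)-\F_{\lambda,Q}(A_{\lambda,Q})$ in the parameters $\phi,\psi,x_2$ and $\tilde R_2-r_{\lambda,Q}$: Theorem \ref{thm:quantWill} applied separately to $F$ and $G$ bounds the $\F_\lambda$-deficit below by $c(\|\phi\|_{W^{2,2}}^2+\|\psi\|_{W^{2,2}}^2)$, while strict convexity of $h_{\lambda,Q}$ yields a quadratic term in $(\tilde R_2-r_{\lambda,Q})^2$. For the Riesz contribution I use the radial symmetry of $A_{\lambda,Q}$ to observe that its Riesz potential is constant on each boundary circle, so the first variation of $V_\alpha$ in $\phi,\psi$ reduces to multiples of $\int\phi$ and $\int\psi$, which are of order $\|\phi\|^2,\|\psi\|^2$ by \eqref{volumeconstraint}; a quantitative version of Lemma \ref{lem:RieszA} of the form $V_\alpha(B_{\tilde R_1}\setminus B_{\tilde R_2}(x_2))-V_\alpha(B_{\tilde R_1}\setminus B_{\tilde R_2})\gtrsim |x_2|^2$ controls the translation variable. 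Collecting terms gives
\begin{equation*}
	\F_{\lambda,Q}(E)-\F_{\lambda,Q}(A_{\lambda,Q})\ge (c-CQ)(\|\phi\|_{W^{2,2}}^2+\|\psi\|_{W^{2,2}}^2)+cQ|x_2|^2+c(\tilde R_2-r_{\lambda,Q})^2,
\end{equation*}
which is strictly positive for $Q$ small unless $\phi=\psi=0$, $x_2=0$ and $\tilde R_2=r_{\lambda,Q}$; hence $E=A_{\lambda,Q}$.

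The main obstacle is the Riesz expansion. While the quantitative elastica inequality controls the $\lambda P+W$-deficit quadratically in $(\phi,\psi)$, the Riesz contribution a priori carries a linear-in-$(\phi,\psi)$ piece of size $Q$ that could overwhelm this coercivity. Verifying that the linear piece vanishes (via constancy of the Riesz potential along each concentric circle and the area normalization \eqref{volumeconstraint}) and extracting the quadratic coercivity in $|x_2|$ from Lemma \ref{lem:RieszA} with constants uniform on $\lambda\in[\lambda_*,\bar\lambda]$ is the delicate point; the lower bound on $|r_{\lambda,Q}-r_\lambda|$ in \eqref{estimdeltar} will also follow from the same non-degeneracy of $\frac{d}{dr}V_\alpha(A_r)$ at $r_\lambda$.
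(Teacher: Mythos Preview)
Your one-parameter analysis (Stage~2) matches the paper's Steps~1--4 closely; the paper additionally gives a self-contained geometric proof that $g'(r)=\frac{d}{dr}V_\alpha(A_r)<0$ for all $r>0$ (via a reflection across the mid-circle), which you only assert in passing. This sign is what drives both the direction $r_{\lambda,Q}>r_\lambda$ and the two-sided rate \eqref{estimdeltar}, so you should not leave it unproved.

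Where you genuinely diverge is in Stage~3. The paper does \emph{not} carry out a Fuglede expansion of $\F_{\lambda,Q}$ around $A_{\lambda,Q}$. Instead it argues by contradiction: given $E_n=F_n\setminus G_n$ with $\F_{\lambda_n,Q_n}(E_n)\le\F_{\lambda_n,Q_n}(A_n)$, it compares with the non-optimal annulus $B_{R_n}\setminus B_{r_n}$ having $|B_{R_n}|=|F_n|$, $|B_{r_n}|=|G_n|$ (which is admissible since $A_n$ minimizes among annuli), drops the Willmore contributions via $W(F_n)\ge W(B_{R_n})$, $W(G_n)\ge W(B_{r_n})$, and then uses the Riesz rearrangement inequality in the form $\int_{F_n}\int_{G_n}\le\int_{B_{R_n}}\int_{B_{r_n}}$ to decouple the Riesz difference into $V_\alpha(B_{R_n})-V_\alpha(F_n)$ plus $V_\alpha(B_{r_n})-V_\alpha(G_n)$. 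This yields two independent inequalities of the form $P(\cdot)+\tfrac{Q_n}{\lambda_n}V_\alpha(\cdot)\le P(B)+\tfrac{Q_n}{\lambda_n}V_\alpha(B)$, so \cite{KnMu13} forces $F_n$ and $G_n$ to be balls for $Q_n/\lambda_n$ small---no second variation of $V_\alpha$ around an annulus, no quantitative version of Lemma~\ref{lem:RieszA}, and no separate handling of the translation $x_2$ are needed.

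Your route is in principle viable, but it costs you three additional ingredients that the paper avoids entirely: (i) a quantitative $V_\alpha(B_{R}\setminus B_{r}(x_2))-V_\alpha(B_{R}\setminus B_r)\gtrsim|x_2|^2$ (only the qualitative statement is in Lemma~\ref{lem:RieszA}); (ii) a bound on the full Riesz second variation around the annulus in the $(\phi,\psi,x_2)$ variables, including cross terms; and (iii) the observation that the linear Riesz variation in $\phi,\psi$ is absorbed by \eqref{volumeconstraint}. The paper's decoupling-by-rearrangement trick sidesteps all of this and reduces the problem to the already-known minimality of balls for the Gamow functional.
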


\begin{proof}
\begin{enumerate}[align=left, leftmargin=0pt, 
listparindent=\parindent, labelwidth=0pt, itemindent=!,label=\underline{Step \arabic*}.]
\item
We  minimize first $\F_{\lambda,Q}$ in the class of annuli. By \eqref{lem:RieszA}, this minimum is attained by centered annuli i.e.~annuli of the form
 $A_r=B_{R}\backslash B_r$ with $R=R(r)=\sqrt{1+r^2}$. We recall that $f_\lambda(r)=\F_\lambda(A_r)$ and let $g(r)=V_\alpha(A_r)$. Hence, we are left with minimizing
\begin{equation*}
  h_{\lambda,Q}(r)=f_\lambda(r)+Q g(r).
\end{equation*}
Since $f_\lambda$ is coercive and $g$ is positive (and since both are continuous), there exists at least one minimum $r_{\lambda,Q}$ of $h_{\lambda,Q}$. 
\item
We claim that $g'(r)<0$ for all $r>0$. To prove this, let $v(x)=\int_{B_R\setminus B_r} |x-y|^{-2+\alpha}\,dy$ be the potential created by the annulus $B_R\backslash B_r$ then
\begin{align*}
  g'(r) &= \frac{d}{dr} \int_{B_{R(r)}\setminus B_r}\int_{B_{R(r)}\setminus B_r} |x-y|^{-2+\alpha}\,dy\,dx\\
  &= 2 \Big(R'(r)\int_{\partial B_{R(r)}} v(x)\,d\Ha^1(x)-\int_{\partial B_r} v(x)\,d\Ha^1(x)\Big)\\
  &= 2\int_{\partial B_r} v\lt(\frac{R}{r}x\rt)-v(x)\,d\Ha^1(x). 
\end{align*}

For $x\in \partial B_r$, let $S_x=B_R((1+\frac{R}{r})x)\cap A_r$ (see Figure \ref{fig:Sx}).
 \begin{figure}[h!]\begin{center}
 \resizebox{8.cm}{!}{\input{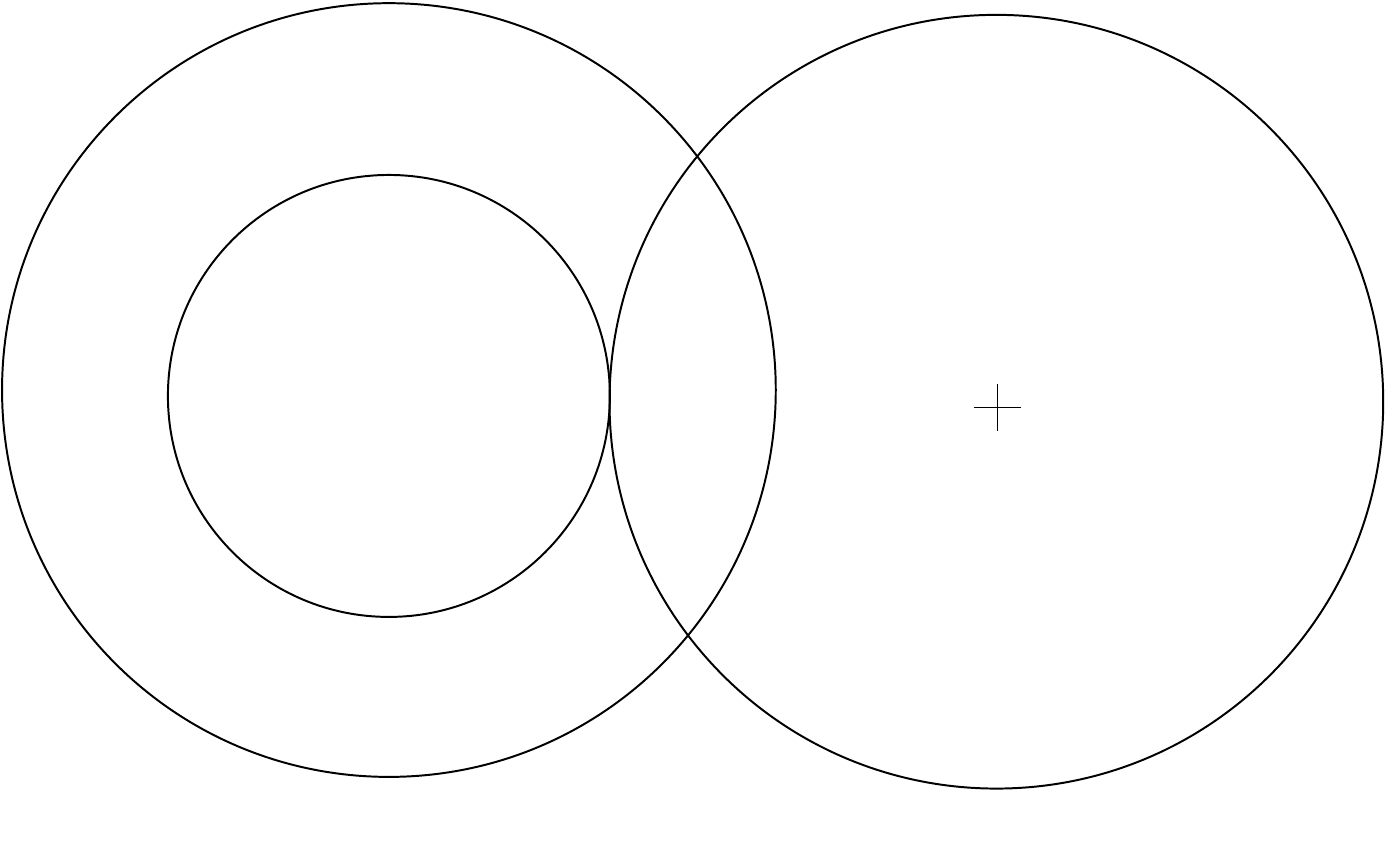_t}}
   \caption{The set $S_x$.} \label{fig:Sx}
 \end{center}
 \end{figure}
By the symmetry of $S_x$ with respect to the line $\lt\{\frac{x}{r}\cdot y=\frac{r+R}{2}\rt\}$, 
\begin{equation*}
   \int_{S_x} \frac{1}{|x-y|^{2-\alpha}}=\int_{S_x}\frac{1}{|\frac{R}{r}x-y|^{2-\alpha}}.
\end{equation*}
Moreover, since for $y\in A_r\cap \bar{S}_x^c$, $|x-y|<|\frac{R}{r}x-y|$, we have for $x\in \partial B_r$
\begin{equation*}
   v\lt(\frac{R}{r}x\rt)-v(x)=\int_{A_r}\frac{1}{|\frac{R}{r}x-y|^{2-\alpha}}-\int_{A_r} \frac{1}{|x-y|^{2-\alpha}}<0
\end{equation*}
so that for every $r>0$, $g'(r)<0$.
\item
We therefore have $r_{\lambda,Q}>r_\lambda$ and that  $r_{\lambda,Q}$ is increasing in $Q$ and decreasing in $\lambda$. Since $ h_{\lambda,Q}(r_\lambda)\geq h_{\lambda,Q}(r_{\lambda,Q})$ and since $f_\lambda$ is strictly convex with $f_\lambda'(r_\lambda)=0$ and $f_\lambda'''<0$ we next deduce
\begin{equation*}
	Q(g(r_\lambda)-g(r_{\lambda,Q})) \geq f_\lambda(r_{\lambda,Q})-f_\lambda(r_\lambda) \geq \frac{1}{2}f_\lambda''(r_{\lambda,Q})(r_{\lambda,Q}-r_\lambda)^2 .
\end{equation*}
On the one hand $g(r_\lambda)\leq V_\alpha(B_1)$ and on the other hand $r_{\lambda,Q}\leq C(\lambda_*)$ and $f_\lambda''(r_{\lambda,Q})\geq c(\lambda_*)>0$ for all $\lambda\geq\lambda_*$, $Q<1$. We  infer that
\begin{equation*}
	(r_{\lambda,Q}-r_\lambda)^2 \le C(\lambda_*)Q.
\end{equation*}
Hence $r_{\lambda,Q}\to r_\lambda$ as $Q\to 0$ uniformly for all $\lambda\in [\lambda_*,\bar \lambda]$.
\item
By the minimizing property of $r_{\lambda,Q}$, we deduce
\begin{equation*}
   0 = f'(r_{\lambda,Q})+Qg'(r_{\lambda,Q}) =  f''(\widetilde r_{\lambda,Q})(r_{\lambda,Q}-r_\lambda) + Qg'(r_{\lambda,Q})
\end{equation*}
for some $r_\lambda\leq \widetilde r_{\lambda,Q}\leq r_{\lambda,Q}$. We have $c(\lambda_*)\leq f''(\widetilde r_{\lambda,Q})\leq C(\lambda_*)$ and since $c(\lambda_*)\leq r_{\lambda,Q}\leq C(\lambda_*)$ also $g'(r_{\lambda,Q})$
is uniformly bounded from above and below from which  \eqref{estimdeltar} follows.
\item
Assume for the sake of contradiction that we can find a sequence $\lambda_n\in[\lambda_*,\bar \lambda]$ converging to $\lambda\in[\lambda_*,\bar \lambda]$ and  sequences $Q_n\to 0$,  $(E_n)_n$ in $\calM(|B_1|)$ which are not annuli and such that
\begin{equation}
	\F_{\lambda_n,Q_n}(E_n)\le \F_{\lambda_n,Q_n}(A_n), \label{eq:hypThm3.7}
\end{equation} 
where $A_n=A_{r_{\lambda_n,Q_n}}$ is the optimal annulus (which is centered). 

By \eqref{eq:hypThm3.7} and the fact that $r_{\lambda_n,Q_n}\to r_{\lambda}$, we see that $E_n$ satisfies \eqref{convergmin} so that Lemma \ref{lem:conv-Alambda} together with Lemma \ref{lem:EQ_to_Alambda} imply that 
 for  $n$ sufficiently large, up to a translation $E_n={F_n}\backslash {G_n}$ with $F_n$ converging strongly in $W^{2,2}$ to $B_{R_\lambda}$ (where $R_\lambda=\sqrt{1+r_\lambda^2}$) and $G_n$ converging strongly in $W^{2,2}$ to $B_{r_\lambda}$.
 Let $R_n\to R_\lambda$ and $r_n\to r_\lambda$ be such that 
 \[
   |B_{R_n}|=|{F_n}|\qquad  \textrm{and} \qquad |B_{r_n}|=|{G_n}|.
 \]
 Since $A_n$ is optimal for $\F_{\lambda_n,Q_n}$ among annuli, we have $\F_{\lambda_n,Q_n}(A_n)\le \F_{\lambda_n,Q_n}(B_{R_n}\backslash B_{r_n})$ so that \eqref{eq:hypThm3.7} becomes
\begin{multline*}
 \lambda_nP(F_n) +W(F_n) +\lambda_n P(G_n)+W(G_n) +Q_n V_\alpha(F_n\backslash G_n)\\
 \le \lambda_nP(B_{R_n}) +W(B_{R_n}) +\lambda_n P(B_{r_n})+W(B_{r_n}) +Q_n V_\alpha(B_{R_n}\backslash B_{r_n}).
\end{multline*}
Using that  by \cite{BucurHenrot,FeKN16}, $W(F_n)\ge W(B_{R_n})$ and $W(G_n)\ge W(B_{r_n})$, this simplifies to
\begin{equation}\label{eq:secstep}
 \lambda_n P(F_n) +\lambda_n P(G_n) +Q_n V_\alpha(F_n\backslash G_n)
 \le \lambda_n P(B_{R_n})  +\lambda_n P(B_{r_n}) +Q_n V_\alpha(B_{R_n}\backslash B_{r_n}).
\end{equation} 
We now estimate (for simplicity we do not write the kernel in the integrals)
\begin{align*}
  &V_\alpha(B_{R_n}\backslash B_{r_n})-V_\alpha({F_n\backslash G_n})\\
  &\qquad = \int_{B_{R_n}}\int_{B_{R_n}}+\int_{B_{r_n}}\int_{B_{r_n}}-2\int_{B_{R_n}}\int_{B_{r_n}}-\int_{G_n}\int_{G_n} - \int_{F_n}\int_{F_n} +2\int_{F_n}\int_{G_n}\\
  &\qquad =V_\alpha(B_{R_n})-V_\alpha(F_n)+V_\alpha(B_{r_n})-V_\alpha(G_n) +2 \lt[\int_{F_n}\int_{G_n}-\int_{B_{R_n}}\int_{B_{r_n}}\rt]
\end{align*}
By the Riesz rearrangement inequality,
\begin{equation*}
   \int_{F_n}\int_{G_n}-\int_{B_{R_n}}\int_{B_{r_n}}\le 0
\end{equation*}
from which we obtain
\[
 V_\alpha(B_{R_n}\backslash B_{r_n})\le V_\alpha({F_n\backslash G_n})+V_\alpha(B_{R_n})-V_\alpha(F_n)+V_\alpha(B_{r_n})-V_\alpha(G_n).
\]
Inserting this into \eqref{eq:secstep} and dividing by $\lambda_n$, we obtain
\[
 P(F_n)+\frac{Q_n}{\lambda_n} V_\alpha(F_n)+  P(G_n)+\frac{Q_n}{\lambda_n} V_\alpha(G_n)\le P(B_{R_n})+\frac{Q_n}{\lambda_n}V_\alpha(B_{R_n})+  P(B_{r_n})+\frac{Q_n}{\lambda_n}V_\alpha(B_{r_n}),
\]
which implies by \cite[Proposition 7.1]{KnMu13} that if $Q_n/\lambda_n$ is small enough then $F_n=B_{R_n}$ and $G_n=B_{r_n}(x_n)$ for some $x_n\in \R^2$. This contradicts our assumption that $E_n$ was not an annulus and concludes the proof.
\end{enumerate}
\end{proof}
Having in mind the study of \eqref{eq:rescale-FlambdaQ} for $\lambda$ and $Q$ fixed but $m$ tending to zero, we now focus on the behavior of $Q(\lambda_*)$ for $\lambda_*$ going to zero.
\begin{proposition}\label{prop:stabannulus2}
There exist $Q_2>0$  and $\lambda_0>0$ such that if $\lambda\in(0, \lambda_0]$ and $Q\le Q_2 \lambda^{\frac{3+\alpha}{2}}$ then every minimizer $E_{\lambda,Q}$ of 
\eqref{eq:rescale-FlambdaQ} is a centered annulus. If $E_{\lambda,Q}=B_{R_{\lambda,Q}}\backslash B_{r_{\lambda,Q}}$ then $\lambda^{1/2}r_{\lambda,Q}\to 1$ and $\lambda^{1/2} R_{\lambda,Q}\to 1$, 
as $\lambda\to 0$.
\end{proposition}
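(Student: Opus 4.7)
The strategy is a rescaling that removes the degeneracy as $\lambda \to 0$. For $E \in \M(|B_1|)$, set $\widetilde E := \lambda^{1/2} E$, so that $|\widetilde E| = \lambda\pi$ and a direct calculation yields
\begin{equation*}
\F_{\lambda, Q}(E) \,=\, \lambda^{1/2}\, \F_{1, \widetilde Q}(\widetilde E),\qquad \widetilde Q := Q\, \lambda^{-(3+\alpha)/2}.
\end{equation*}
Under the hypothesis one has $\widetilde Q \le Q_2$, so that the minimization of $\F_{\lambda, Q}$ on $\M(|B_1|)$ becomes the minimization of $\F_{1, \widetilde Q}$ on $\M(\lambda \pi)$ at uniformly bounded Riesz coupling. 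The natural scale of the optimal annulus is of order one in these rescaled variables: the substitution $r = \lambda^{-1/2}\rho$ in \eqref{ELannulus} reveals that $\rho_\lambda := \lambda^{1/2} r_\lambda \to 1$ as $\lambda\to 0$, which will give the asymptotic $\lambda^{1/2} r_{\lambda, Q} \to 1$ and $\lambda^{1/2} R_{\lambda, Q} \to 1$ once one shows that the charge-induced shift $|r_{\lambda, Q} - r_\lambda|$ is of lower order than $r_\lambda$. The latter is a Lipschitz-type estimate analogous to Step~4 of the proof of Theorem \ref{theo:stabannulus}, carried out with the explicit $\lambda$-scaling of $f_\lambda''(r_\lambda)$ and $g'(r_\lambda)$; Lemma \ref{lem:RieszA} then forces the optimal annulus to be centered.

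The core of the proof is to rule out non-annular minimizers, which I would argue by contradiction. Assume sequences $\lambda_n \to 0$, $Q_n \le Q_2 \lambda_n^{(3+\alpha)/2}$, and minimizers $E_n$ which are not centered annuli. The rescaled sets $\widetilde E_n := \lambda_n^{1/2} E_n \in \M(\lambda_n \pi)$ minimize $\F_{1, \widetilde Q_n}$ with $\widetilde Q_n \le Q_2$. Using the rearrangement step of Theorem \ref{theo:minFlambda}, any competitor can be replaced by a union of balls and annuli of no greater energy; since a simply connected set of mass $\lambda_n\pi$ has Willmore energy at least $2\pi/\sqrt{\lambda_n}$ by the Willmore isoperimetric inequality of \cite{BucurHenrot,FeKN16}, whereas the optimal thin annulus has $\F_{1, \widetilde Q_n} = O(1)$, no simply connected component can be present for $n$ large, and $\widetilde E_n = \widetilde F_n \setminus \widetilde G_n$ for simply connected $\widetilde G_n \subset \widetilde F_n$. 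The conclusion then follows the template of Steps~4--5 of the proof of Theorem \ref{theo:stabannulus}: $W^{2,2}$-compactness of $\widetilde F_n, \widetilde G_n$ around their limiting unit balls via Lemma \ref{lem:quantWill2}, combined with the Kn\"upfer--Muratov quantitative isoperimetric estimate \cite[Proposition 7.1]{KnMu13} applied at the bounded ratio $\widetilde Q_n \le Q_2$, forces $\widetilde F_n$ and $\widetilde G_n$ to be balls, so that $\widetilde E_n$ is a centered annulus, contradicting the assumption.

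The main obstacle is the uniformity of the compactness and stability arguments across $\lambda_n \to 0$. The rescaling is designed so that the limiting configuration is a nondegenerate annulus of inner and outer radius approaching one, with the entire mass $\lambda_n\pi$ concentrated in a thin layer of width $O(\lambda_n)$; this yields uniform diameter and perimeter bounds on $\widetilde F_n, \widetilde G_n$ so that Lemmas~\ref{lem:quantWill2} and \ref{lem:conv-Alambda} can be applied piecewise to extract strong convergence to the limiting balls. The fact that $\widetilde Q_n \le Q_2$ is uniformly small is precisely the content of the hypothesis $Q \le Q_2 \lambda^{(3+\alpha)/2}$, which is exactly what makes the Kn\"upfer--Muratov step of Theorem \ref{theo:stabannulus} applicable uniformly in $n$.
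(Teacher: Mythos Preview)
Your overall strategy matches the paper's: rescale by $\lambda^{1/2}$ to reduce to $\F_{1,\widetilde Q_n}$ on $\M(\lambda_n\pi)$ with $\widetilde Q_n\le Q_2$ uniformly bounded, then run the contradiction argument of Theorem~\ref{theo:stabannulus}. The asymptotics for $r_{\lambda,Q}$ and the use of Lemma~\ref{lem:RieszA} and the Kn\"upfer--Muratov step are also as in the paper.

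There is, however, a genuine gap in your topological reduction. From the rearrangement of Theorem~\ref{theo:minFlambda} and the lower bound $W\ge 2\pi/\sqrt{\lambda_n}$ for simply connected sets you correctly exclude simply connected \emph{components}, but the conclusion $\widetilde E_n=\widetilde F_n\setminus\widetilde G_n$ does not follow: the rearrangement replaces each connected component by a single ball or annulus and therefore cannot detect the genus of the original component. You have not ruled out that $\widetilde E_n$ has several components, nor that a single component has two or more holes. The paper closes this in one stroke by counting boundary curves: writing $\partial\widetilde E_n=\Gamma_1^n\cup\dots\cup\Gamma_{N_n}^n$ and using that $\partial B_1$ minimizes $P+W$ among all simple closed curves with value $4\pi$, one gets
\[
4\pi N_n\le\sum_{i=1}^{N_n}(P+W)(\Gamma_i^n)\le \F_{1,\widetilde Q_n}(\widetilde E_n)\le \F_{1,\widetilde Q_n}(B_{R_n}\setminus B_1)\le 8\pi+C(1+\widetilde Q_n)\lambda_n,
\]
hence $N_n\le 2$ for $n$ large; combined with $N_n\ge 2$ (not simply connected) this forces $N_n=2$ and simultaneously gives $(P+W)(\Gamma_i^n)\to 4\pi$, hence $W(\Gamma_i^n)\to 2\pi$, which is exactly the input needed for Lemma~\ref{lem:quantWill2}.

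A second point: you invoke Lemma~\ref{lem:conv-Alambda} ``piecewise'', but that lemma is stated for fixed mass $|B_1|$ and $\lambda_n\to\lambda\in(0,\infty]$, neither of which holds here (the rescaled mass is $\lambda_n\pi\to 0$). The paper does not use Lemma~\ref{lem:conv-Alambda} at this stage; once $N_n=2$ is known, it applies Lemma~\ref{lem:quantWill2} directly to each boundary curve (after rescaling to unit enclosed area) to obtain the $W^{2,2}$ convergence of $\widetilde F_n$ and $\widetilde G_n$ to unit balls.
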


\begin{proof}
Consider sequences $(\lambda_n)_n$, $(Q_n)_n$ with
\begin{equation*}
	\lambda_n\to 0,\qquad Q_n\to 0,\qquad \limsup_{n\to\infty} Q_n\lambda_n^{-\frac{3+\alpha}{2}}\leq Q_2.
\end{equation*}
We start by making the rescaling $E= \lambda_n^{-1/2} \widehat E$ so that 
\begin{equation*}
  \F_{\lambda_n,Q_n}(E)=\lambda_n^{1/2}\lt(P(\widehat E)+W(\widehat E)+ Q_n\lambda_n^{-\frac{3+\alpha}{2}}V_\alpha(\widehat E)\rt)=\lambda_n^{1/2} \F_{1, \widetilde Q_n }(\widehat E),
 \end{equation*}
with $\widetilde Q_n=Q_n \lambda_n^{-\frac{3+\alpha}{2}}$. We are thus left to study the minimization problem
\begin{equation*}
	\min_{\M(\lambda_n\pi)} \F_{1, \widetilde Q_n}( E).
\end{equation*}
Let us prove that if $(E_n)_n$ is a sequence with $\F_{1,\widetilde{Q}_n}(E_n)\le \F_{1,\widetilde{Q}_n}(A_n)$, where $A_n$ is the optimal (centered) annulus, then for $n$
large enough $E_n=F_n\backslash G_n$ with $F_n$ and $G_n$ simply connected and both converging to the unit ball strongly in $W^{2,2}$.
Indeed, if we choose $R_n>0$ with $R_n^2-1=\lambda_n$  and write $\partial E_n$ as a union of simple closed curves $\Gamma_1^n,\dots,\Gamma_{N_n}^n$, we deduce that 
\begin{equation*}
	 \sum_{i=1}^{N_n} (P+W)(\Gamma_i^n)\leq  \F_{1, \widetilde Q_n}( E_n) \leq \F_{1, \widetilde Q_n }( B_{R_n}\setminus B_1) \leq 8\pi +C(1+\widetilde Q_n)\lambda_n,
\end{equation*}
where in the last inequality we have used that for every set $\Omega$, $V_\alpha(\Omega)\les |\Omega|$.
Since  $\partial B_1$ minimizes $P+W$ among all simple closed curves
(by \cite{BucurHenrot,FeKN16} and the isoperimetric inequality it is minimized by circles and then a simple optimization on the radius gives the minimality of $\partial B_1$),
and since $(1+\widetilde Q_n) \lambda_n\to 0$ as $n\to\infty$ we deduce that for all $n$ sufficiently large
we have $N_n\leq 2$. By \cite[Theorem 1.1]{BucurHenrot} we also have $N_n>1$ since otherwise $W(E_n)$ would blow up.  Hence $N_n=2$ for all $n$ sufficiently large and
$\lim_{n\to\infty}W(\Gamma_i^n)= 2\pi$ for $i=1,2$ so that  the  claim follows from Lemma \ref{lem:quantWill2}.

Arguing then exactly as in the proof of Theorem \ref{theo:stabannulus} and using that $\widetilde Q_n\le Q_2$ we obtain that
for $n$ sufficiently large $E_n$ is an annulus if  $Q_2$ is sufficiently small.
\end{proof}
\begin{remark}
 We do not expect that the condition $Q\le c_0 \lambda^{\frac{3+\alpha}{2}}$ is sharp. Indeed, looking at the proofs of Proposition \ref{prop:stabannulus2}
 and Theorem \ref{theo:stabannulus} we see that we have argued separately that $F_n$ and $G_n$ should be balls without exploiting the fact that the volume
 of $E_n=F_n\backslash G_n$ is small. One could hope  to improve the result by obtaining a better control on $V_\alpha(B_{R_n}\backslash B_{r_n})-V_\alpha(E_n)$. 
\end{remark}

As a corollary, we obtain by \eqref{eq:rescale-FlambdaQ} the minimality of annuli for small volumes.
\begin{corollary}\label{corcor}
 There exist $\lambda_1$ and $c_1$ such that   for $m\le \frac{\lambda_1}{\lambda}$ and  $Q^{\frac{2}{3+\alpha}}\le c_1 \lambda$, minimizers of  $\F_{\lambda,Q}$ in $\M(m)$
are  centered annuli.
\end{corollary}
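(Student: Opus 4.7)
The plan is to deduce this corollary directly from Proposition \ref{prop:stabannulus2} by applying the mass-rescaling identity \eqref{eq:rescale-FlambdaQ}. Writing $\lambda(m)=\lambda m/\pi$ and $Q(m)=Q(m/\pi)^{(3+\alpha)/2}$, any minimizer $E\in\M(m)$ of $\F_{\lambda,Q}$ corresponds (via the map $E\mapsto \sqrt{\pi/m}\,E$) to a minimizer in $\M(|B_1|)$ of $\F_{\lambda(m),Q(m)}$. Moreover, being a centered annulus is preserved under this rescaling, so it suffices to choose $\lambda_1,c_1$ such that the hypotheses of Proposition \ref{prop:stabannulus2} are satisfied for the rescaled parameters.

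First I would translate the hypothesis $\lambda(m)\leq \lambda_0$ (where $\lambda_0$ is the constant from Proposition \ref{prop:stabannulus2}) into a constraint on $m$: this reads $\lambda m/\pi\leq \lambda_0$, i.e.\ $m\leq \pi\lambda_0/\lambda$, so I would set $\lambda_1:=\pi\lambda_0$. Next I would translate the smallness hypothesis $Q(m)\leq Q_2\,\lambda(m)^{(3+\alpha)/2}$ into a constraint on $Q$. Substituting the definitions yields
\begin{equation*}
Q\Big(\frac{m}{\pi}\Big)^{\frac{3+\alpha}{2}}\,\leq\, Q_2\Big(\frac{\lambda m}{\pi}\Big)^{\frac{3+\alpha}{2}},
\end{equation*}
which simplifies (the $m$-dependence cancels) to $Q\leq Q_2\,\lambda^{(3+\alpha)/2}$, that is $Q^{2/(3+\alpha)}\leq Q_2^{2/(3+\alpha)}\lambda$. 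Hence the choice $c_1:=Q_2^{2/(3+\alpha)}$ works.

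The argument is entirely a bookkeeping of the scaling and I do not anticipate any genuine obstacle: the essential work (existence of the thresholds, minimality of centered annuli, and convergence to the unit ball in the rescaled variables) has already been done in Proposition \ref{prop:stabannulus2}. The only point worth double-checking is that the rescaled problem really corresponds to mass $|B_1|$ and that the rescaling commutes with the class of centered annuli; this follows from the explicit identity \eqref{eq:rescale-FlambdaQ} and the scaling homogeneity of $P$, $W$ and $V_\alpha$.
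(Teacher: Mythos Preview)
Your proposal is correct and is exactly the approach the paper takes: the corollary is stated as an immediate consequence of Proposition~\ref{prop:stabannulus2} via the rescaling identity~\eqref{eq:rescale-FlambdaQ}, and your computation of $\lambda_1=\pi\lambda_0$ and $c_1=Q_2^{2/(3+\alpha)}$ just makes the one-line deduction explicit.
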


\subsection{Non-existence of minimizers for large charge}
We now prove a  non-existence result for $\alpha\in(1,2)$ and $Q$ large enough (depending on $\lambda$). The restriction $\alpha\in(1,2)$ comes
from the fact that contrarily to what happens for the generalized Ohta-Kawasaki model (\eqref{eq:fun-pre} with $\mu=0$), we cannot easily use a cutting argument. That procedure has roughly the effect of replacing $\alpha$ by $\alpha+1$ (see \cite{KnMu13,KnMu14,FrKN16})
and thus allows to extend the non-existence result from $\alpha \in(1,2)$ to $\alpha\in(0,2)$.\\
For any $Q>0$ we first observe that if a minimizer exists then it must be connected. The following lower bound will thus by useful to prove non-existence results.
\begin{lemma}
There exists $c_\alpha>0$ such that, 
for every $\lambda,Q>0$ and every connected set $E\in \M(|B_1|)$, there holds
 \begin{equation}\label{lowerconnect}
  \F_{\lambda,Q}(E)\geq 
  c_\alpha\lambda^{\frac{2-\alpha}{3-\alpha}} Q^{\frac{1}{3-\alpha}}.
 \end{equation}
\end{lemma}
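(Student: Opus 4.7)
The idea is to reduce the estimate to a scalar optimization over the diameter $d := \diam(E)$, exploiting two opposing consequences of connectedness: the perimeter controls $d$ from above, while the Riesz energy controls $d$ from below.

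First I would establish the perimeter--diameter bound
\[
P(E) \,\geq\, 2d,
\]
valid for any bounded connected set $E\subset\R^2$ with regular boundary. This follows from projecting $E$ orthogonally onto a line realizing the diameter: the projection of $\partial E$ must cover the resulting interval of length $d$ at least twice, and the coarea formula gives $P(E)\geq 2d$. Next, since $|x-y|\leq d$ for every $x,y\in E$ and the exponent $2-\alpha$ is positive, a direct pointwise estimate of the kernel yields
\[
V_\alpha(E) \,=\, \int_{E\times E}\frac{dx\,dy}{|x-y|^{2-\alpha}} \,\geq\, \frac{|E|^2}{d^{2-\alpha}} \,=\, \frac{\pi^2}{d^{2-\alpha}}.
\]

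Combining these with $W(E)\geq 0$, I would obtain
\[
\F_{\lambda,Q}(E) \,\geq\, 2\lambda d \,+\, \pi^2 Q\, d^{\alpha-2},
\]
and conclude by Young's inequality with conjugate exponents $p = \tfrac{3-\alpha}{2-\alpha}$ and $q = 3-\alpha$ applied to $X = 2\lambda d$ and $Y = \pi^2 Q\, d^{\alpha-2}$: the powers of $d$ cancel exactly (since $\tfrac{2-\alpha}{3-\alpha}\cdot 1 + \tfrac{1}{3-\alpha}\cdot(\alpha-2) = 0$), so
\[
X+Y \,\geq\, c_\alpha\, X^{\frac{2-\alpha}{3-\alpha}} Y^{\frac{1}{3-\alpha}} \,=\, c_\alpha\, \lambda^{\frac{2-\alpha}{3-\alpha}} Q^{\frac{1}{3-\alpha}},
\]
as claimed. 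No real obstacle is expected: the argument is essentially a scaling estimate that balances the attractive perimeter term against the repulsive Riesz term, with optimal diameter $d_\ast\sim (Q/\lambda)^{1/(3-\alpha)}$; both ingredients are elementary and the matching of exponents is forced by scaling.
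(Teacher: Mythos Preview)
Your proof is correct and follows essentially the same approach as the paper: both use $P(E)\ge 2d$ and $V_\alpha(E)\ge \pi^2 d^{\alpha-2}$ to reduce to a one-variable optimization in $d$, which is then resolved by AM--GM (your version) or by direct minimization (the paper's). The paper additionally records the bound $dW(E)\ge 4\pi$ via \cite{BucurHenrot} but does not actually use it in the final step, so your choice to drop $W$ altogether is harmless.
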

\begin{proof}
Let $E$ be a connected set and let $d=diam(E)\geq 2$ be its diameter.  If we write $E=F\backslash \cup_{i=1}^n G_i$ with $F$ and $G_i$ simply connected then $diam(E)=diam(F)$ and $W(E)\ge W(F)$.
By \cite{BucurHenrot} we have  $\diam(F) W(F)\geq 4\pi$ so that $d W(E)\ge 4\pi$. Moreover, $P(E)\ge 2d$ and $V_\alpha(E)\ge d^{-2+\alpha}|E|^2$  and therefore
\begin{equation*}
  \F_{\lambda,Q}(E) \geq 2\lambda d+ \frac{4\pi}{d}+Q d^{-2+\alpha}\pi^2.
 \end{equation*}
Since 
\begin{equation*}
   \min_{d\geq 2} \,  \lambda d+Qd^{-2+\alpha}\geq c_\alpha \lambda^{\frac{2-\alpha}{3-\alpha}} Q^{\frac{1}{3-\alpha}},
\end{equation*}
we get \eqref{lowerconnect}.
\end{proof}

We will also need an estimate for the interaction energy of annuli. Since we will also use it in Section \ref{sec:3d} in dimension $3$, we state it in arbitrary dimension.
\begin{lemma}\label{lem:annulus}
Consider $0<\eps\leq \frac{1}{2}$ and $\alpha\in(0,d)$. There exists $C_\alpha>0$ (depending implicitly also on the dimension $d$) such that
\begin{equation}\label{eq:estimValpha}
	V_\alpha(B_1\setminus B_{1-\eps}) \leq C_\alpha
	\begin{cases}
		\eps^2  \quad&\text{ if }1<\alpha< d,\\
		\eps^2 |\ln \eps| \quad&\text{ if }\alpha=1,\\
		 \eps^{1+\alpha}\quad&\text{ if }0<\alpha< 1.
	\end{cases}
\end{equation}
\end{lemma}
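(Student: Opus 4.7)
The plan is to reduce the problem to a pointwise estimate on the Riesz potential
\begin{equation*}
	u_\varepsilon(x)\,:=\,\int_{B_1\setminus B_{1-\varepsilon}} \frac{dy}{|x-y|^{d-\alpha}},
\end{equation*}
and then use
\begin{equation*}
	V_\alpha(B_1\setminus B_{1-\varepsilon})\,=\,\int_{B_1\setminus B_{1-\varepsilon}} u_\varepsilon(x)\,dx\,\leq\,|B_1\setminus B_{1-\varepsilon}|\,\sup_{x\in\R^d}u_\varepsilon(x)\,\lesssim\,\varepsilon\,\sup_{x\in\R^d}u_\varepsilon(x).
\end{equation*}
Since the target estimates are of size $\varepsilon^2$, $\varepsilon^2|\ln\varepsilon|$, $\varepsilon^{1+\alpha}$ respectively, it suffices to prove the bound
\begin{equation*}
	\sup_{x\in\R^d}u_\varepsilon(x)\,\lesssim\,g_\alpha(\varepsilon),\quad\text{where}\quad g_\alpha(\varepsilon)=\begin{cases}\varepsilon&\text{if }\alpha>1,\\ \varepsilon|\ln\varepsilon|&\text{if }\alpha=1,\\ \varepsilon^\alpha&\text{if }\alpha<1.\end{cases}
\end{equation*}

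By rotation I may fix $x=ae_d$ with $a\in[0,1]$; by a direct comparison (cf.\ the layer-cake/symmetrization argument) the supremum is attained when $x\in\bar B_1\setminus B_{1-\varepsilon}$, so $a\in[1-\varepsilon,1]$. Using spherical coordinates $y=\rho\omega$ with $\rho\in[1-\varepsilon,1]$ and $\omega\in S^{d-1}$ and integrating out the extra angles, one obtains
\begin{equation*}
	u_\varepsilon(ae_d)\,\lesssim\,\int_{1-\varepsilon}^{1}\int_0^\pi \frac{\sin^{d-2}\theta}{\big((a-\rho)^2+4a\rho\sin^2(\theta/2)\big)^{(d-\alpha)/2}}\,d\theta\,d\rho.
\end{equation*}
The piece with $\theta\in[\pi/2,\pi]$ contributes $O(\varepsilon)$ since the denominator stays bounded below. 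For $\theta\in[0,\pi/2]$, I bound $\sin^{d-2}\theta\lesssim\theta^{d-2}$ and $4a\rho\sin^2(\theta/2)\gtrsim\theta^2$, and then perform the change of variables $\theta=|a-\rho|v$, yielding
\begin{equation*}
	\int_0^{\pi/2}\frac{\theta^{d-2}\,d\theta}{\big((a-\rho)^2+c\theta^2\big)^{(d-\alpha)/2}}\,\lesssim\,|a-\rho|^{\alpha-1}\int_0^{C/|a-\rho|}\frac{v^{d-2}}{(1+v^2)^{(d-\alpha)/2}}\,dv.
\end{equation*}

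The integrand in the last integral behaves like $v^{\alpha-2}$ at infinity, and I examine the three cases. If $\alpha>1$ the integral is $O(T^{\alpha-1})$ with $T=C/|a-\rho|$, so the whole angular factor is bounded by a constant. If $\alpha=1$ the integral is $O(\ln T)$, so the angular factor is $O(|\ln|a-\rho||)$. If $\alpha<1$ the integral converges, so the angular factor is $O(|a-\rho|^{\alpha-1})$. It then remains to integrate against $d\rho$ using $|a-\rho|\leq\varepsilon$:
\begin{equation*}
	\int_{1-\varepsilon}^{1} 1\,d\rho=\varepsilon,\qquad \int_{1-\varepsilon}^{1}|\ln|a-\rho||\,d\rho\lesssim\varepsilon|\ln\varepsilon|,\qquad \int_{1-\varepsilon}^{1}|a-\rho|^{\alpha-1}\,d\rho\lesssim\varepsilon^\alpha,
\end{equation*}
which yields the bound $u_\varepsilon(x)\lesssim g_\alpha(\varepsilon)$ and hence the lemma.

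The main obstacle is the clean reduction of the angular integral to a one-dimensional form whose behavior genuinely depends on whether $\alpha$ lies above, at, or below the critical value $1$; this critical threshold is precisely what produces the three regimes in \eqref{eq:estimValpha}. Once this reduction is in place, the case analysis is elementary.
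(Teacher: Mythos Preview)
Your argument is correct and arrives at the same pointwise bound on the potential as the paper, but by a genuinely different route. The paper works with the layer--cake representation centered at $x$,
\[
	u_\varepsilon(x)\,=\,\int_0^\infty \Ha^{d-1}\big(\partial B_\varrho(x)\cap E\big)\,\varrho^{-d+\alpha}\,d\varrho,
\]
and splits the $\varrho$--integral into the ranges $[0,\varepsilon]$, $[\varepsilon,\tfrac{1}{2}]$, $[\tfrac{1}{2},\infty)$. The key geometric input is the bound $\Ha^{d-1}(\partial B_\varrho(x)\cap E)\lesssim \varepsilon\varrho^{d-2}$ on the middle range, which encodes the thinness of the annulus directly; the three regimes in $\alpha$ then fall out of $\int_\varepsilon^{1/2}\varrho^{\alpha-2}\,d\varrho$. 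Your approach instead fixes polar coordinates at the origin, reduces to an angular integral over $\theta$, and rescales by $|a-\rho|$ to isolate the same trichotomy. Both methods are clean; the paper's has the advantage that the crucial estimate is a transparent geometric fact about how a sphere meets a thin shell, whereas yours trades this for a more explicit one--dimensional computation. One small remark: the detour through $\sup_{x\in\R^d}u_\varepsilon$ and the claim about where this supremum is attained are unnecessary, since for the application you only need $x\in B_1\setminus B_{1-\varepsilon}$, which already gives $a\in[1-\varepsilon,1]$.
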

\begin{proof}
Let $E=B_1\setminus B_{1-\eps}$ and consider for an arbitrary $x\in E$ the potential
\begin{equation*}
	v_\alpha(x)=\int_E \frac{1}{|x-y|^{d-\alpha}}\,dy = \int_0^\infty \Ha^{d-1}(\partial B_\varrho(x)\cap E) \varrho^{-d+\alpha}\, d\varrho.
\end{equation*}
For any $0<\varrho\le \eps$ we have $\Ha^{d-1}(\partial B_\varrho(x)\cap E)\les \varrho^{d-1}$, hence
\begin{equation*}
	\int_0^\eps \Ha^{d-1}(\partial B_\varrho(x)\cap E) \varrho^{-d+\alpha}\, d\varrho \les  \int_0^\eps \varrho^{\alpha-1} =C_\alpha \eps^\alpha.
\end{equation*}
For $\eps\le \varrho\le 1/2$ we have $\Ha^{d-1}(\partial B_\varrho(x)\cap E)\leq C\eps \rho^{d-2}$, hence
\begin{align*}
	&\int_\eps^{1/2} \Ha^{d-1}(\partial B_\varrho(x)\cap E) \varrho^{-d+\alpha} \,d\varrho \\
	&\qquad \qquad \leq  C\eps \int_\eps^{1/2}\varrho^{-2+\alpha} \,d\varrho \,
	\leq\, C_\alpha
	\begin{cases}
		 \eps \quad &\text{ for }1<\alpha<d,\\
		\eps |\ln \eps| \quad &\text{ for }\alpha=1,\\
	  \eps^{\alpha} \quad &\text{ for }0<\alpha<1.
	\end{cases}
\end{align*}
Finally, for $\frac{1}{2}<\varrho<\infty$ we obtain
\begin{equation*}
	 \int_{1/2}^\infty \Ha^{d-1}(\partial B_\varrho(x)\cap E) \varrho^{-d+\alpha} d\varrho \leq  2^{d-\alpha}\int_{1/2}^\infty \Ha^{d-1}(\partial B_\varrho(x)\cap E) \,d\varrho \leq C_\alpha |E|\leq C_\alpha \eps.
\end{equation*}
From these inequalities and $V_\alpha(B_1\setminus B_{1-\eps}) = \int_E v_\alpha(x)\,dx$ the claim follows.
\end{proof}

We next state our main non-existence result.
\begin{proposition}\label{prop:non-existence}
For any $\alpha\in(1,2)$, 
there exists $Q_3(\alpha)>0$ such that, for all $\lambda,Q>0$ with $Q\geq Q_3(\alpha)(\lambda+\lambda^{\frac{\alpha-1}{2}})$, the functional $\F_{\lambda,Q}$
does not admit minimizers in $\M(|B_1|)$.
\end{proposition}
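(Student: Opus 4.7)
My plan is to argue by contradiction: suppose $\F_{\lambda,Q}$ admits a minimizer $E\in\M(|B_1|)$. The set $E$ must be connected, since otherwise translating one of its disjoint components to infinity would preserve $P$ and $W$ while strictly decreasing $V_\alpha$, contradicting minimality. Hence \eqref{lowerconnect} applies and yields
\[
\F_{\lambda,Q}(E)\;\geq\;c_\alpha\,\lambda^{(2-\alpha)/(3-\alpha)}Q^{1/(3-\alpha)}
\]
with $c_\alpha>0$ obtained from the interior minimum of $d\mapsto 2\lambda d+4\pi/d+Q\pi^2 d^{\alpha-2}$ on $[2,\infty)$; this minimum is attained in the interior, and has the displayed scaling, precisely when $Q\geq Q_3'(\alpha)\lambda$, which accounts for the ``$\lambda$'' term in the hypothesis.

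To contradict this bound I construct an explicit disconnected competitor consisting of two widely separated thin annuli of area $\pi/2$. For $\delta\in(0,1/4]$ let $A_\delta=B_{R_\delta}\setminus B_{R_\delta-\delta}$ with $R_\delta=(1/2+\delta^2)/(2\delta)\sim 1/(4\delta)$, so $|A_\delta|=\pi/2$, and put $E_t=A_\delta\cup(A_\delta+te_1)$. Elementary formulas give $P(A_\delta)\les 1/\delta$ and $W(A_\delta)\les \delta$, and a rescaling of Lemma \ref{lem:annulus} (valid because $\alpha\in(1,2)$) yields $V_\alpha(A_\delta)\les R_\delta^\alpha\delta^2\les \delta^{2-\alpha}$. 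Since the cross interaction between the two copies vanishes as $t\to\infty$,
\[
\F_{\lambda,Q}(E)\;\leq\;\limsup_{t\to\infty}\F_{\lambda,Q}(E_t)\;\les\;\lambda/\delta+\delta+Q\,\delta^{2-\alpha}.
\]
I then choose $\delta=(\lambda/Q)^{1/(3-\alpha)}$, which lies in $(0,1/4]$ once $Q_3$ is large. The outer two terms each become $\lambda^{(2-\alpha)/(3-\alpha)}Q^{1/(3-\alpha)}$ times an explicit constant, whereas the middle term is $(\lambda/Q)^{1/(3-\alpha)}$, which is strictly subdominant exactly when $Q^2\gtrsim\lambda^{\alpha-1}$, i.e.\ when $Q\gtrsim\lambda^{(\alpha-1)/2}$. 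This is precisely the origin of the second term in the hypothesis.

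The decisive constant improvement comes from the splitting itself: since $V_\alpha$ is homogeneous of degree $2+\alpha$, halving the mass of each piece multiplies the total self-interaction by $2^{1-(2+\alpha)/2}=2^{-\alpha/2}$, so the effective coefficient of $\lambda^{(2-\alpha)/(3-\alpha)}Q^{1/(3-\alpha)}$ on the competitor side beats that of a single annulus by the factor $2^{(1-\alpha)/(3-\alpha)}<1$ (valid for $\alpha>1$). Choosing $Q_3(\alpha)$ sufficiently large forces $\F_{\lambda,Q}(E)<c_\alpha\lambda^{(2-\alpha)/(3-\alpha)}Q^{1/(3-\alpha)}$, the desired contradiction. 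The main technical difficulty is quantifying the constants sharply enough that this strict inequality follows from the stated hypothesis alone; should the single factor $2^{(1-\alpha)/(3-\alpha)}$ be insufficient for $\alpha$ close to $1$, one replaces the two pieces by $n\geq 2$ widely separated copies of area $\pi/n$ and optimizes also in $n$, gaining the factor $n^{(1-\alpha)/(3-\alpha)}\to 0$ as $n\to\infty$, while the extra Willmore cost of order $n^{3/2}\delta$ remains subdominant under the same hypothesis.
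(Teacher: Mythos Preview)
Your overall strategy---lower bound from connectedness via \eqref{lowerconnect}, upper bound from a disconnected union of thin annuli, and comparison---is exactly the paper's. The paper splits into cases: for $\lambda\ge 1$ it takes $N$ annuli of fixed outer radius $1$ and \emph{optimizes in $N$}, obtaining competitor energy of order $(\lambda Q)^{1/2}$, which has a genuinely smaller scaling than the lower bound once $Q\gtrsim\lambda$; for $\lambda\le 1$ it takes $N$ annuli of outer radius $R\ge\lambda^{-1/2}$ (so perimeter dominates Willmore), \emph{fixes} $N=N_*(\alpha)$ large, and optimizes in $R$, gaining exactly the factor $N_*^{(1-\alpha)/(3-\alpha)}$ that you also identify.

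There is, however, a real gap in your two-piece argument. Your competitor and the lower bound \eqref{lowerconnect} have the \emph{same} scaling $\lambda^{(2-\alpha)/(3-\alpha)}Q^{1/(3-\alpha)}$, so the contradiction comes down to constants. The factor $2^{(1-\alpha)/(3-\alpha)}$ you exhibit compares the two-piece competitor to a \emph{single-annulus competitor}, not to the constant $c_\alpha$ in \eqref{lowerconnect}; since the single annulus only furnishes an upper bound on the competitor side, its constant need not equal $c_\alpha$, and no inequality between them has been established. Taking $Q_3(\alpha)$ large does nothing to this ratio of leading constants---it only suppresses the subdominant Willmore term and enforces $\delta\le 1/4$. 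Your $n$-piece fallback, with the gain $n^{(1-\alpha)/(3-\alpha)}\to 0$, is therefore not a contingency ``for $\alpha$ close to $1$'' but the actual mechanism required; it is precisely what the paper implements in the $\lambda\le 1$ case. Once $n=n(\alpha)$ is fixed to beat $c_\alpha$, the constraint $\delta\le 1/4$ forces $Q\gtrsim_\alpha\lambda$ and the Willmore subdominance forces $Q\gtrsim_\alpha\lambda^{(\alpha-1)/2}$, which together yield the stated hypothesis. (In particular, your attribution of the $\lambda$ term to an interior-minimum condition for \eqref{lowerconnect} is misplaced: that lemma holds for all $\lambda,Q>0$.)
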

 
\begin{proof}
Let us start by the case $\lambda\geq 1$. 
If a minimizer exists then by \eqref{lowerconnect},
 \begin{equation}\label{lowerconnect2}
  \min_{\M(|B_1|)} \F_{\lambda,Q}(E)\geq  c_\alpha\lambda^{\frac{2-\alpha}{3-\alpha}} Q^{\frac{1}{3-\alpha}}.
 \end{equation}
Consider as a competitor $N\geq 2$ identical annuli of outer diameter $2$. We may assume that they are so far apart that the interaction energy between different annuli becomes negligible. 
The inner radius is given by $ r_N=\lt(1-\frac{1}{N}\rt)^{1/2}$, and the elastica energy of the competitor is not larger than $10\pi N\les 4\pi \lambda N$ so that the perimeter is dominant. 
By Lemma \ref{lem:annulus} the interaction energy of a single annulus is estimated by a constant times $N^{-2}$. Using this competitor in \eqref{lowerconnect2}, we obtain
\begin{equation*}
   C_\alpha\lt(\lambda N +  \frac{Q}{N }\rt)\geq \min_{\M(|B_1|)} \F_{\lambda,Q}(E)\geq c_\alpha\lambda^{\frac{2-\alpha}{3-\alpha}} Q^{\frac{1}{3-\alpha}}.
\end{equation*}
Optimizing in $N$ we find $N\sim Q^{1/2}\lambda^{-1/2}$ and 
\begin{equation*}
  \lambda^{1/2} Q^{1/2} \geq c_\alpha\lambda^{\frac{2-\alpha}{3-\alpha}} Q^{\frac{1}{3-\alpha}},
\end{equation*}
which leads to a contradiction if  $Q\geq Q_3(\alpha)\lambda$ with $Q_3(\alpha)$ chosen  large enough.

We now consider the case $\lambda\le 1$. As above, if a minimizer exists then  \eqref{lowerconnect2} holds. We now consider a competitor $E_{N,R}$ given by $N\geq 2$ identical annuli
of outer radius $R\geq 2$, to be optimized. We prescribe
\begin{equation}\label{hyp:diam}
 R\geq \lambda^{-1/2},
\end{equation}
so that we are still in the regime where $W(E_{N,R})\les \lambda P(E_{N,R})$. If $R-\eps$ is the inner radius of the annulus, then $\eps\les \frac{1}{RN}$ and using Lemma \ref{lem:annulus} we deduce that 
\begin{equation*}
	V_\alpha(B_R\setminus B_{R-\eps}) = R^{-2+\alpha}V_\alpha(B_1\setminus B_{1-\eps/R}) \leq C_\alpha R^{-2+\alpha}N^{-2}.
\end{equation*} 
Using $E_{N,R}$ in \eqref{lowerconnect2} we get
\begin{equation*}
   C_\alpha\lt(N\lambda R+  \frac{Q}{N} R^{-2+\alpha}\rt) \geq c_\alpha\lambda^{\frac{2-\alpha}{3-\alpha}} Q^{\frac{1}{3-\alpha}}.
\end{equation*}
Optimizing the left-hand side in $R$ yields $R=C_\alpha\lt(\frac{Q}{N^2\lambda}\rt)^{\frac{1}{3-\alpha}}$ and then
\begin{equation*}
   Q^{\frac{1}{3-\alpha}}N^{\frac{1-\alpha}{3-\alpha}}\lambda^{\frac{2-\alpha}{3-\alpha}}  \geq c_\alpha \lambda^{\frac{2-\alpha}{3-\alpha}} Q^{\frac{1}{3-\alpha}},
\end{equation*}
which gives a contradiction for $N\geq N_*(\alpha)$ sufficiently large. It only remains to check that such a choice is compatible with hypothesis \eqref{hyp:diam}. The latter is equivalent to
\begin{equation*}
 Q\lambda^{-\frac{\alpha-1}{2}}\geq N_*^2(\alpha),
\end{equation*}
which is satisfied if $Q\geq Q_3(\alpha)\lambda^{\frac{\alpha-1}{2}}$ for $Q_3(\alpha)$ chosen sufficiently large.
\end{proof}
 
As a consequence, we obtain the following non-existence result for large masses from \eqref{eq:rescale-FlambdaQ}.

\begin{proposition}
For every $\alpha\in (1,2)$, there exists $C_\alpha>0$ such that if $m\ge C_\alpha \lambda^{-1}$ and $Qm^{\frac{1+\alpha}{2}}\ge C_\alpha \lambda$, there are no minimizers of $\F_{\lambda,Q}$ in $\M(m)$.
\end{proposition}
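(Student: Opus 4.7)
The plan is to reduce the statement directly to the non-existence result of Proposition \ref{prop:non-existence} via the scaling identity \eqref{eq:rescale-FlambdaQ}, so essentially no new ideas are needed beyond a careful rewriting of the hypotheses in the rescaled parameters.

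First I would use \eqref{eq:rescale-FlambdaQ} to note that minimizers of $\F_{\lambda,Q}$ in $\M(m)$ are in one-to-one correspondence with minimizers of $\F_{\lambda(m),Q(m)}$ in $\M(|B_1|)$, where
\begin{equation*}
\lambda(m)=\lambda\,\frac{m}{\pi},\qquad Q(m)=Q\Bigl(\frac{m}{\pi}\Bigr)^{\frac{3+\alpha}{2}}.
\end{equation*}
Therefore it suffices to verify that, under the stated hypotheses on $m,\lambda,Q$, the parameters $\lambda(m),Q(m)$ fall into the non-existence regime of Proposition \ref{prop:non-existence}, namely $Q(m)\ge Q_3(\alpha)\bigl(\lambda(m)+\lambda(m)^{(\alpha-1)/2}\bigr)$.

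Next I would check this condition. From $m\ge C_\alpha\lambda^{-1}$, we have $\lambda(m)=\lambda m/\pi\ge C_\alpha/\pi$, which (choosing $C_\alpha\ge \pi$) ensures $\lambda(m)\ge 1$. Since $\alpha\in(1,2)$ yields $(\alpha-1)/2<1$, this gives $\lambda(m)^{(\alpha-1)/2}\le \lambda(m)$, so it is enough to check
\begin{equation*}
Q(m)\ge 2Q_3(\alpha)\lambda(m),\qquad\text{i.e.}\qquad Q\Bigl(\frac{m}{\pi}\Bigr)^{\frac{3+\alpha}{2}}\ge 2Q_3(\alpha)\,\lambda\,\frac{m}{\pi}.
\end{equation*}
Rearranging, this is exactly $Qm^{(1+\alpha)/2}\ge C_\alpha'\lambda$ for a suitable constant $C_\alpha'$ depending only on $\alpha$, which holds by the second hypothesis provided $C_\alpha$ is chosen at least $C_\alpha'$.

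Having verified the hypothesis of Proposition \ref{prop:non-existence} in the rescaled variables, we conclude that no minimizer of $\F_{\lambda(m),Q(m)}$ exists in $\M(|B_1|)$, hence no minimizer of $\F_{\lambda,Q}$ exists in $\M(m)$. There is no real obstacle in the argument: the only mild care needed is choosing the constant $C_\alpha$ large enough so that both the normalization $\lambda(m)\ge 1$ (which makes the $\lambda(m)$ term dominate $\lambda(m)^{(\alpha-1)/2}$) and the factor coming from $2Q_3(\alpha)$ are absorbed, after which the conclusion is immediate.
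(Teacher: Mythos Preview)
Your argument is correct and is exactly the approach the paper intends: the proposition is stated there as a direct consequence of the rescaling identity \eqref{eq:rescale-FlambdaQ} applied to Proposition \ref{prop:non-existence}, and you have filled in the details accurately, including the observation that $\lambda(m)\ge 1$ makes the linear term dominate $\lambda(m)^{(\alpha-1)/2}$.
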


%

\section{The three-dimensional case}\label{sec:3d}

In the three dimensional case, the Willmore energy is invariant under dilations. Using Gauss-Bonnet formula 
it is easy to see that it is minimized by balls \cite{Will65}. Moreover, sharp quantitative bounds have been obtained in \cite{DeMu05}, see also \cite{LaSc14}. Since the Willmore energy in dimension three
does not have the same degeneracy as in dimension two and already exhibits a preference for balls, we mainly study the minimization problem \eqref{problem2d-gen} without a perimeter penalization.  
We thus restrict ourselves to the case $\lambda=0$, i.e.~to 
the functional
\begin{align*}
	\F_Q(E)\,=\,  \W( E) + QV_\alpha(E),
\end{align*}
and consider the constrained minimization problem $\inf_{\M(m)} \F_Q(E)$.
By a rescaling, the above minimization problem can be reduced to 
\begin{equation}\label{eq:def-min-cs}
	\inf_{\M(|B_1|)} \F_Q(E).
\end{equation}

\subsection{Upper bound on the energy}
We start by proving a universal upper bound on the energy.
\begin{proposition}\label{prop:upper-bound-Mm}
For any $Q>0$ we have
\begin{align*}
	\inf_{\M(|B_1|)} \F_Q(E)\,\leq\, 8\pi.
\end{align*}
\end{proposition}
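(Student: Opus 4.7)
The plan is to construct a test configuration consisting of thin spherical shells of large radius. The target value $8\pi$ is suggestive: it equals twice the Willmore energy $W(B_R)=4\pi$ of a single round sphere, which hints that an annular region between two concentric spheres should be the right competitor. The key point is that the Willmore energy is scale invariant, so we can inflate the shell to any size we want at no cost, while the Riesz interaction energy behaves well under this procedure.

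Concretely, for $R$ large I would consider the competitor
\begin{equation*}
  E_R = B_R \setminus B_{R-\eps_R}, \quad \text{with } \eps_R \text{ chosen so that } |E_R|=|B_1|.
\end{equation*}
A direct computation gives $\eps_R \sim (3 R^2)^{-1}$ as $R\to\infty$. The boundary $\partial E_R$ consists of two round spheres of radii $R$ and $R-\eps_R$; since $H^2 = 4/r^2$ on a sphere of radius $r$, each of the two spheres contributes exactly $4\pi$ to $W$, so $W(E_R)=8\pi$ independently of $R$.

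It remains to check that the Riesz interaction energy can be made arbitrarily small. Here I would invoke Lemma \ref{lem:annulus} (which is stated for general dimension) together with the scaling identity $V_\alpha(B_R\setminus B_{R-\eps}) = R^{3+\alpha}\, V_\alpha(B_1\setminus B_{1-\eps/R})$. Substituting $\eps/R \sim (3R^3)^{-1}$ and using \eqref{eq:estimValpha} gives
\begin{equation*}
  V_\alpha(E_R) \;\les\;
  \begin{cases}
     R^{\alpha-3} & \text{if } 1<\alpha<3,\\
     R^{-2}\log R & \text{if } \alpha=1,\\
     R^{-2\alpha} & \text{if } 0<\alpha<1,
  \end{cases}
\end{equation*}
all of which tend to $0$ as $R\to\infty$. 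Combining the two bounds, $\F_Q(E_R) = 8\pi + Q V_\alpha(E_R) \to 8\pi$ as $R\to\infty$, and taking the infimum concludes the proof.

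There is no substantive obstacle here; the only thing to keep in mind is to verify admissibility (the shell is bounded and has smooth, hence $W^{2,2}$-regular, boundary) and to handle the small technicality that $\eps_R/R\to 0$ so that Lemma \ref{lem:annulus} does apply for all large $R$. The result is in fact a simple computation once one realizes that thin, large concentric shells saturate the bound.
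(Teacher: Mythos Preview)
Your proposal is correct and follows essentially the same approach as the paper: both use a thin spherical shell of large radius and fixed volume $|B_1|$, observe that its Willmore energy is exactly $8\pi$, and then apply Lemma~\ref{lem:annulus} together with the scaling of $V_\alpha$ to show that the Riesz energy vanishes as $R\to\infty$. The only cosmetic difference is that the paper fixes the inner radius and lets the outer one be $R+\delta$, whereas you fix the outer radius at $R$; your case analysis in $\alpha$ and the resulting decay rates match the paper's computation.
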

\begin{proof}
We show that an annulus with very big radius and volume $|B_1|$ has an energy arbitrarily close to $8\pi$. Since the Willmore energy of any annulus is
$8\pi$ we need to show that the Riesz interaction energy of the annulus vanishes as the radius goes to infinity.

For an arbitrary $R>0$ consider $\delta=\delta(R)$ such that the annulus $A_{R,R+\delta}=B_{R+\delta}(0)\setminus B_R(0)$ has volume $|B_1|$, that is $\delta=\frac{1}{3 R^2}+O(R^{-5})$. 
Applying \eqref{eq:estimValpha} to $\eps=\frac{\delta}{R+\delta}$ we deduce that for $0<\alpha<1$
\begin{align*}
	V_\alpha(A_{R,R+\delta})\,&=\, (R+\delta)^{3+\alpha} V_\alpha(B_1\setminus B_{1-\eps})\\
	&\leq\, C_\alpha R^{3+\alpha}\eps^{1+\alpha}\,
	\,\leq\, C_\alpha R^2\delta^{1+\alpha}\,\leq\, C_\alpha R^{-2\alpha}\,\stackrel{R\to \infty}{\to}\, 0.
\end{align*}
We conclude similarly in the case $1\leq \alpha <3$.
\end{proof}
Let us prove that the same upper bound  can be reached in the class of topological spheres.
\begin{proposition}\label{prop:upper-bound-Mm-sphere}
For any $Q>0$ we have
\begin{align*}
	\inf\big\{\F_Q(E)\,:\, E\in\M(|B_1|), \genus(\partial E)=0\big\}\,\leq\, 8\pi.
\end{align*}
\end{proposition}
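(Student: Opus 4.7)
The plan is to start from the annulus construction of Proposition~\ref{prop:upper-bound-Mm} and perform a small topological surgery so that the boundary becomes a single topological sphere, while keeping both $W$ and $V_\alpha$ controlled. The annulus $A_{R,R+\delta}$ already has Willmore energy exactly $8\pi$ and Riesz energy tending to $0$ as $R\to\infty$, but its boundary has two connected components. To fix this, I would drill out a narrow radial bridge joining the two boundary spheres: fixing a radial direction, remove from $A_{R,R+\delta}$ a thin cylindrical tube $T$ of radius $a$ and length $\delta$, obtaining $E_R:=A_{R,R+\delta}\setminus T$. An elementary Euler characteristic computation (each sphere loses a small disk around the axis of $T$, and the cylindrical wall of $T$ is glued in) shows that $\partial E_R$ is a single connected surface of genus zero. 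The sharp edges where $T$ meets the spheres will be rounded by a $C^1$ transition to ensure $W^{2,2}$-regularity.

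I would then choose $a=a(R)$ and the smoothing scale so that both $W(E_R)\to 8\pi$ and $V_\alpha(E_R)\to 0$ as $R\to\infty$. On the Willmore side, the two spherical caps which remain contribute $8\pi - O(a^2/R^2)$; the cylindrical wall of $T$ contributes $\pi\delta/(2a)$, which vanishes if $a/\delta\to\infty$ (e.g.\ $a=\sqrt\delta$); and the rounding region at each of the two junctions can be designed, say via a short catenoidal (minimal-surface) patch smoothly joined to the sphere and the cylinder, so that its Willmore contribution is $o(1)$ in the limit. On the Riesz side, the tube removes only volume $O(a^2\delta)\ll|B_1|$, which can be absorbed into a slight adjustment of $\delta$, and the cross-interaction terms introduced by the surgery are negligible, so $V_\alpha(E_R)\to 0$ exactly as in the proof of Proposition~\ref{prop:upper-bound-Mm}. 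Combining these estimates gives $\F_Q(E_R)\le 8\pi + o(1)$ as $R\to\infty$, and the infimum bound follows.

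The main difficulty will be controlling the smoothing region: a round cylinder and a round sphere do not share a tangent plane along their intersection circle, so a careful $C^1$ transition is needed and one has to verify that it can be realized with a Willmore contribution that vanishes as $R\to\infty$. This is where one exploits the freedom to use minimal (catenoidal) pieces, which contribute zero to $\int H^2\,d\Ha^2$, rather than a naive toroidal fillet that would give a nontrivial positive constant contribution.
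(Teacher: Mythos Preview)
Your surgery idea is the right one, and a construction along these lines can be made to work, but the specific version you describe has an unresolved clash of scales. You take $a\gg\delta$ so that the cylindrical wall contributes $\pi\delta/(2a)=o(1)$; but then each cylinder--sphere junction is, after rescaling by $1/a$, a junction between a unit cylinder and a plane, confined to a slab of thickness $\delta/a\to 0$. A catenoidal patch tangent to the cylinder at its waist only rotates towards the horizontal once it has been given axial room at least comparable to its waist radius; inside a slab of thickness $\delta/a\ll 1$ it stays essentially vertical and you are left with a right-angle corner. By scale invariance of $W$, smoothing such a corner within that slab costs a fixed positive amount independent of $R$ (your own remark about the toroidal fillet applies verbatim), so your competitor satisfies $W\ge 8\pi + c$ for some $c>0$ rather than $8\pi+o(1)$.

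The fix is to discard the cylinder and use a single catenoidal neck with waist radius $c\ll\delta$: then the neck has room to open up inside the shell and meet the two spheres nearly tangentially, and the residual angle can be smoothed at cost $o(1)$. One must also check that the opening radius $c\cosh(\delta/(2c))$ stays small enough for the spheres to be treated as planes over the surgery region, which forces a careful choice of $c$. The paper avoids carrying out this construction by hand and instead invokes \cite{MueRoe14}, which directly produces genus-zero sets $E_n\subset A_{r_n,1}$ with $r_n\nearrow 1$, $|E_n|>\tfrac12|A_{r_n,1}|$ and $W(E_n)\to 8\pi$; it then rescales to volume $|B_1|$ and bounds $V_\alpha$ exactly as in Proposition~\ref{prop:upper-bound-Mm}.
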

\begin{proof}
The construction in \cite{MueRoe14} provides a sequence of sets $E_n\in \M$ with $\partial E_n$ of genus zero and real numbers $r_n$ with $r_n\nearrow 1$ as $n\to\infty$, 
with the following properties:
\begin{equation*}
	E_n\subset A_{r_n,1},\qquad
	|E_n| > \frac{1}{2}|A_{r_n,1}|\qquad \textrm{and} \qquad
	\W(E_n) \to 8\pi.
\end{equation*}
Considering the rescaled sets $\widetilde E_n= \eta_n E_n$, $\eta_n^3= \frac{|B_1|}{|E_n|}$ we observe that $|\widetilde E_n|=|B_1|$ and $\eta_n\to\infty$, $\W(\widetilde E_n)\to 8\pi$. 
Moreover, $\widetilde E_n\subset A_{\eta_nr_n,\eta_n}$ with  $\eta_nr_n\to\infty$ as $n\to\infty$ and $|A_{\eta_nr_n,\eta_n}|\leq 2|B_1|$.
Following the proof of Proposition \ref{prop:upper-bound-Mm} we therefore deduce that 
\begin{equation*}
	V_\alpha(\widetilde E_n)\leq V_\alpha(A_{\eta_nr_n,\eta_n})\stackrel{n\to\infty}{\to} 0.
\end{equation*}
\end{proof}
%
\subsection{Area and diameter bounds}
In this section we prove uniform  area and diameter bounds in the class of sets with Willmore energy strictly below $8\pi$. 

\begin{proposition}\label{prop:bounds}
Let  $E\in\M(|B_1|)$ be such that for  some $\delta>0$, we have
\begin{align*}
	\W(E)\,\leq\, 8\pi-\delta.
\end{align*}
Then, $\partial E$ is connected and there exists a constant $C_\delta$ such that
\begin{align}
	 \Ha^2(\partial E) +\diam(E)^2 \,\leq\, C_\delta. \label{eq:area-estimate}
\end{align}
\end{proposition}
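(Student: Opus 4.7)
The plan has three steps: show that $\partial E$ must be connected, establish the diameter bound (the hard part), and derive the area bound from the diameter bound.

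For connectedness, I would use the classical Willmore inequality applied to each connected component. Every connected component $\Sigma_i$ of $\partial E$ is a closed $W^{2,2}$ surface in $\mathbb{R}^3$ and thus satisfies $\frac{1}{4}\int_{\Sigma_i} H^2\,d\Ha^2 \geq 4\pi$. Summing, if $\partial E$ had at least two components we would get $W(E)\geq 8\pi$, contradicting $W(E)\leq 8\pi-\delta$. Hence $\partial E$ is connected.

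For the diameter bound I would argue by contradiction and rely on a concentration/bubbling analysis. Assume there is a sequence $(E_n)_n\subset \M(|B_1|)$ with $W(E_n)\leq 8\pi-\delta$ but $d_n:=\diam(E_n)\to\infty$, and pick $p_n,q_n\in\partial E_n$ with $|p_n-q_n|=d_n$. Simon's monotonicity formula implies a uniform local bound of the form $\Ha^2(\partial E_n\cap B_R(p))\leq CR^2 W(\partial E_n)$ for any $p\in\partial E_n$ and $R>0$, so the translated varifolds $\partial E_n-p_n$ and $\partial E_n-q_n$ are locally of uniformly bounded mass. Along a subsequence one extracts two limit varifolds $V^{(1)}$ and $V^{(2)}$; the density lower bound from monotonicity forces both limits to be nontrivial. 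Because $|p_n-q_n|\to\infty$, the regions contributing to $V^{(1)}$ and $V^{(2)}$ are eventually disjoint; applying lower semicontinuity of $W$ on each piece together with the Willmore inequality yields $\liminf_n W(\partial E_n)\geq W(V^{(1)})+W(V^{(2)})\geq 8\pi$, contradicting $W(E_n)\leq 8\pi-\delta$.

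The main obstacle is to ensure that \emph{both} bubbles indeed carry at least one full Willmore unit $4\pi$, ruling out degenerate limits (for instance an affine plane, which has $W=0$). This is where the volume constraint $|E_n|=|B_1|$ enters: a limit behaving like a half-space at one of the points would force the enclosed volume of $E_n$ near that point to grow without bound, violating $|E_n|=|B_1|$. The rigorous treatment requires a careful concentration-compactness decomposition (dyadic exhaustion around $p_n$ and $q_n$), combined with Simon's monotonicity to localize Willmore mass and with the isoperimetric inequality (or its relative version) to track enclosed volume.

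Once the diameter bound $\diam(\partial E)\leq C_\delta$ is established, the area bound follows quickly. Pick any $p\in\partial E$: since $\partial E\subset B_d(p)$ with $d\leq C_\delta$, Simon's monotonicity formula gives
\begin{equation*}
	\Ha^2(\partial E)\,=\,\Ha^2(\partial E\cap B_d(p))\,\leq\, C\, d^2\, W(\partial E)\,\leq\, C\, C_\delta^2\,(8\pi-\delta),
\end{equation*}
which yields \eqref{eq:area-estimate}.
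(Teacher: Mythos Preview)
Your connectedness step is the same as the paper's, and your final step (deducing an area bound from a diameter bound via Simon's monotonicity) is valid in principle. The central step, however—the diameter bound via a two-bubble concentration argument—has a genuine gap that you yourself flag but do not close.

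The difficulty is exactly the one you name: the local varifold limits around $p_n$ and $q_n$ need not be \emph{closed} surfaces, so the inequality $W\geq 4\pi$ does not apply to them. Your proposed cure (``a half-space limit forces the enclosed volume to blow up'') does not rule out the typical bad scenario: a multiplicity-two plane arising from a very thin sheet. A long thin cigar, for instance, has volume $|B_1|$ and $W<8\pi$, yet at any point and at scales between its thickness and its length the surface looks like two nearly parallel planes enclosing negligible volume. So neither the volume constraint nor the relative isoperimetric inequality by itself excludes a planar (zero-Willmore) limit at one of the two concentration points. Turning your sketch into a proof would require controlling densities and showing each bubble carries $4\pi$, which is essentially a Li--Yau type statement in disguise.

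The paper sidesteps this by reversing the order: it proves the \emph{area} bound first and then reads off the diameter bound from Simon's inequality $\diam(E)\lesssim \Ha^2(\partial E)^{1/2}W(E)^{1/2}$. For the area bound it argues by contradiction: if $\Ha^2(\partial E_n)\to\infty$, rescale so that $\Ha^2(\partial\widetilde E_n)=1$; then $|\widetilde E_n|\to 0$ while $W(\widetilde E_n)\leq 8\pi-\delta$. The key lemma (proved via Hutchinson's oriented varifold compactness) is that a varifold limit of boundaries of sets with vanishing enclosed volume has \emph{even} integer density everywhere. Li--Yau then forces $W(\mu)\geq 8\pi$ for the limit, a contradiction. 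This even-density argument is precisely what replaces—cleanly and globally—the ad hoc exclusion of planar bubbles that your approach would need.
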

If one restricts to boundaries of genus zero the proposition follows from \cite{Schy12}. We will prove Proposition \ref{prop:bounds} below and first prepare the compactness argument that we will employ. Therefore, 
we need to characterize limits of boundaries of sets with vanishing volume and uniformly bounded perimeter and Willmore energy. 
One natural approach to obtain suitable compactness properties uses the concept of varifolds, which we now quickly introduce. We refer to \cite{Simo83} for a detailed exposition. Since in the following lemma the dimension plays no role, we decided to give the statement in arbitrary dimension $d$. 
We let $G(d,d-1)$ be the Grassmannian manifold of unoriented $(d-1)$ planes in $\R^d$
and say that a Radon measure $\mu$  on $\R^d\times G(d,d-1)$ is an integer rectifiable varifold%
\footnote{more precisely a $(d-1)$-integer rectifiable varifold, but here we restrict ourselves to the co-dimension one case, and simplify the notation.} 
if there exist a countably $(d-1)-$ rectifiable set $\Sigma$, and a function $\theta:\Sigma\to \N$ such that 
$\mu=\theta \H^{d-1}\mres \Sigma\otimes \delta_{T\Sigma}$, i.e.,~for every $\psi\in C^0_c(\R^d\times G(d,d-1))$ there holds
\[
 \int_{\R^d\times G(d,d-1)} \psi d\mu=\int_{\Sigma} \psi(x,T_x\Sigma) \theta(x) d\H^{d-1}.
\]
By a slight abuse of notation, we will consider such a $\mu$ as a measure on $\R^d$ i.e.~we identify it with $\theta \H^{d-1}\mres \Sigma$. We say that $\mu$  has a \emph{weak mean curvature vector}
$H\in L^1_{\loc}(\mu,\,\R^d)$ if for all $\eta\in C^1_c(\R^d,\,\R^d)$ the classical first
variation formula for smooth surfaces generalizes to
\begin{gather*}
  \int_{\R^d} \dive_{\Sigma}\eta \,d\mu\,=\, -\int_{\R^d} H \cdot\eta \,d\mu.
\end{gather*}
We extend the Willmore functional to the set of integer
rectifiable varifolds with weak mean curvature $H$, by setting
\begin{equation*}
	W(\mu)\,=\, \frac{1}{4}\int_{\R^d} |H|^2\,d\mu. 
\end{equation*}
We also recall the notion of oriented integral varifolds, introduced by Hutchinson \cite{Hutc86}. 
An oriented integral varifold $V^o$ is a Radon measure on the product $\R^d\times S^{d-1}$ which satisfies for every $\psi\in C^0_c(\R^d\times S^{d-1})$
\begin{equation*}
	V^o(\psi) \,=\, \int_{\Sigma} \big( \theta^+(x) \psi(x, \nu(x)) + \theta^-(x) \psi(x, -\nu(x))\,d\Ha^{d-1}(x),
\end{equation*}
where $\Sigma$ is a  countably $(d-1)$-rectifiable  set and $\theta^\pm:\Sigma\to\N$ are such that  $ \theta^++\theta^->0$ almost everywhere on $\Sigma$, and where $\nu$ is a unit normal field on $\Sigma$. 
We can naturally associate to $V^o$ the integral varifold $\mu=\theta \Ha^{d-1}\mres \Sigma$ with $\theta=(\theta^++\theta^-)$. 

We may now prove the following compactness result (notice that the novel part is the fact that in the limit, the density $\theta$ is even).
\begin{lemma}\label{lem:even-density}
Consider a sequence $(E_n)_n$ of open, bounded subsets of $\R^d$ with $W^{2,2}$-regular boundaries and inner unit normal field $\nu_n:\partial E_n\to S^{d-1}$. Assume that $|E_n|\to 0$, $\sup_n \Ha^{d-1}(\partial E_n)<\infty$ and $\sup_n W(\partial E_n)<\infty$.\\
Then, there exists a subsequence and an integer rectifiable varifold $\mu$ on $\R^d$ with even density $\theta$ such that $\Ha^{d-1}\mres \partial E_n\rightharpoonup\mu$ as Radon measures. 
Moreover,  $\mu$  has weak mean curvature $H\in L^2(\mu)$ and satisfies
\begin{equation}
	W(\mu) \,\leq\, \liminf_{n\to\infty} \W(\partial E_n). \label{eq:liminf-V}
\end{equation}
\end{lemma}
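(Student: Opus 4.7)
The plan is to combine standard varifold compactness with the orientation structure carried by $\partial E_n$ as a set boundary; only the last step (evenness of the density) really uses $|E_n|\to 0$. Setting $V_n=\Ha^{d-1}\mres\partial E_n$, the bound on $\Ha^{d-1}(\partial E_n)$ gives uniform control of the total mass $V_n(\R^d)$, and the Willmore bound gives via Cauchy--Schwarz
\begin{equation*}
|\delta V_n(\eta)|=\Bigl|\int_{\partial E_n}H_n\cdot\eta\,d\Ha^{d-1}\Bigr|\les \W(\partial E_n)^{1/2}V_n(\spt\eta)^{1/2}\sup|\eta|
\end{equation*}
for every $\eta\in C^1_c(\R^d;\R^d)$, so that the first variations $\delta V_n$ are locally uniformly bounded Radon measures. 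Allard's compactness theorem for integer rectifiable varifolds then yields, along a subsequence, an integer rectifiable $\mu$ with $V_n\rightharpoonup\mu$ as Radon measures, admitting a weak mean curvature $H\in L^2_{\loc}(\mu)$. The bound \eqref{eq:liminf-V} follows from the standard $L^2$ lower semicontinuity argument (test against $\eta$, use $\int H_n\cdot\eta\,dV_n\to\int H\cdot\eta\,d\mu$ and duality).

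To access the evenness of the density I would pass to Hutchinson's framework of oriented integral varifolds. Orienting $\partial E_n$ by its inner unit normal $\nu_n$ provides an oriented integral varifold $V_n^o$ on $\R^d\times S^{d-1}$ with $\theta_n^+=1$ and $\theta_n^-=0$. The same mass and first-variation bounds carry over to $V_n^o$, and Hutchinson's compactness theorem \cite{Hutc86} gives, up to a further subsequence, a weak limit $V^o$ which is an oriented integral varifold carried by a countably $(d-1)$-rectifiable set $\Sigma$ with multiplicities $\theta^\pm:\Sigma\to\N$ and a unit normal field $\nu$. The unoriented projection of $V^o$ coincides with $\mu$, so that $\theta=\theta^++\theta^-$.

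The final and key step is to exploit $|E_n|\to 0$ through the divergence theorem: for any $\eta\in C^1_c(\R^d;\R^d)$, testing $V_n^o$ against the function $\psi(x,p)=\eta(x)\cdot p$ gives
\begin{equation*}
V_n^o\bigl(\eta(x)\cdot p\bigr)=\int_{\partial E_n}\eta\cdot\nu_n\,d\Ha^{d-1}=-\int_{E_n}\dive\eta\,dx\,\xrightarrow[n\to\infty]{}\,0,
\end{equation*}
since $\dive\eta$ is bounded and $|E_n|\to 0$. Passing to the weak limit on the left-hand side yields
\begin{equation*}
0=V^o\bigl(\eta(x)\cdot p\bigr)=\int_\Sigma(\theta^+-\theta^-)\,\eta\cdot\nu\,d\Ha^{d-1},
\end{equation*}
and the arbitrariness of $\eta$ forces $\theta^+=\theta^-$ $\Ha^{d-1}$-a.e.~on $\Sigma$. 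Hence $\theta=2\theta^+$ is even, which is the desired conclusion. The main obstacle is conceptual rather than technical: one has to identify the correct test object on the oriented limit, namely the linear functional $\psi(x,p)=\eta(x)\cdot p$, which by the divergence theorem records the enclosed volume of $E_n$ and therefore vanishes in the limit; once this observation is made the evenness of $\theta$ is essentially automatic.
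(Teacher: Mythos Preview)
Your proposal is correct and follows essentially the same route as the paper: Allard's compactness for the unoriented varifold limit, Hutchinson's compactness for the oriented limit, and then the divergence theorem applied to the test function $\psi(x,p)=\eta(x)\cdot p$ together with $|E_n|\to 0$ to force $\theta^+=\theta^-$. The only difference is cosmetic---you spell out the first-variation bound via Cauchy--Schwarz and the lower semicontinuity argument a bit more explicitly than the paper does.
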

\begin{proof}
Consider the associated integer rectifiable varifolds $\mu^n=\Ha^{d-1}\mres \partial E_n$, and the oriented varifolds $V_n^o$, defined for $\psi\in C^0_c(\R^d\times S^{d-1})$ as
\begin{align*}
	V_n^o(\psi) \,&=\, \int_{\partial E_n} \psi(x, \nu^n(x))\,d\Ha^{d-1}(x).
\end{align*}
By the bounds on $\H^{d-1}(\partial E_n)$ and $W(E_n)$ we know from Allard's compactness theorem \cite{Simo83} and \cite[Theorem 3.1]{Hutc86} that up to a subsequence, $\mu_n$ and $V_n^o$ converge respectively to an integer rectifiable varifold $\mu$ with weak mean curvature in $L^2(\mu)$
and to an oriented integral varifold $V^0$.  Moreover, \eqref{eq:liminf-V} is satisfied. 
If $V^o$ is represented by $(\Sigma, \theta^\pm,\nu)$, then for any $\psi\in C^0_c(\R^d)$
\begin{align*}
	\mu(\psi) \,=\, \lim_{n\to\infty} \mu_n(\psi) \,&=\, \lim_{n\to\infty} V_n^o(\psi)\\
	&=\, \int\limits_{\R^d\times S^{d-1}} \psi(x) \,dV^o(x,p)\,
	=\, \int_{\Sigma}( \theta^+ + \theta^-)(x) \psi(x)\,d\Ha^{d-1}(x).
\end{align*}
We therefore conclude that $\mu=(\theta^++\theta^-)\H^{d-1}\mres \Sigma$. \\
Now, by definition we have for any $\eta\in C^1_c(\R^d,\R^d)$
\begin{align*}
	 \int\limits_{\R^d\times S^{d-1}} p\cdot \eta(x) \,dV_n^o(x,p)=\, \int_{\partial E_n} \nu^n \cdot\eta\, d\Ha^{d-1} \,=\,-\int_{E_n} \nabla\cdot\eta \,dx. 
\end{align*}
Since $|E_n|\to 0$, passing to the limit in the previous equality yields for $\eta\in C^1_c(\R^d,\R^d)$
\begin{equation*}
 \int\limits_{\R^d\times S^{d-1}} p\cdot \eta(x) \,dV^o(x,p)=0,
\end{equation*}
from which we conclude that  $\theta^+=\theta^-$ and therefore that $\theta=\theta^++\theta^-$is even.
\end{proof}

\begin{proof}[Proof of Proposition \ref{prop:bounds}]
Since the Willmore energy of any compact surface without boundary is at least $4\pi$ we deduce that $\partial E$ is connected.
It remains to prove the required bounds on  the perimeter and diameter.
We first prove  the uniform area bound. Arguing by contradiction we assume that there exists a sequence $(E_n)_n$ in $\M(|B_1|)$ such that
\begin{equation*}
	\lim_{n\to \infty} \Ha^2(\partial E_n)= \infty,\qquad \textrm{and } \quad \W(E_n)\,\leq\, 8\pi-\delta \quad\text{ for all }n\in\N.
\end{equation*}
Set $\widetilde{E}_n= \Ha^2(\partial \widetilde E_n)^{-\frac{1}{2}} E_n$, to obtain a sequence $(\widetilde E_n)_n$ such that 
\begin{equation*}
	\Ha^2(\partial \widetilde{E}_n)\,=\,1,\, \qquad	 \W(\widetilde{E}_n)\,\leq\, 8\pi-\delta\quad \text{for every }n\in\N,\qquad \textrm{and } \qquad \lim_{n\to \infty} |\widetilde E_n|= 0.
\end{equation*}
Applying Lemma \ref{lem:even-density} we deduce that there is a non trivial (since $\mu(\R^3)=1$) limit  integer rectifiable varifold $\mu$ with even density $\theta$  and such that $W(\mu)<8\pi$. 
This contradicts Li-Yau inequality (see \cite[(A.17)]{KuSc04}) and proves the uniform area bound. Finally, by \cite[Lemma 1.1]{Simo93} (see \cite{topping} for the optimal constant) we obtain the diameter estimates
\begin{align}
	\Ha^2(\partial E)^{\frac{1}{2}}W(E)^{-\frac{1}{2}}\,\leq\, \diam(E)\,\leq\, \frac{2}{\pi}\Ha^2(\partial E)^{\frac{1}{2}}W(E)^{\frac{1}{2}},  \label{eq:diam}
\end{align}
and \eqref{eq:area-estimate} follows.
\end{proof}

\begin{remark}
Let $(E_n)_n$ be a minimizing sequence in $\M(|B_1|)$ with $\F_Q(E_n)\leq 8\pi-\delta$. By Proposition \ref{prop:bounds} we obtain that $(E_n)_n$ has uniformly bounded surface area, volume,  Willmore energy, and diameter.
After possibly shifting $E_n$ we obtain a subsequence and a bounded set $E\in\R^3$ of finite perimeter such that $|E|=|B_1|$,
with $|\nabla\Chi_{E}|(\R^3)\leq  C_\delta$ and such that $\Chi_{E_n}\to\Chi_{E}$ in $L^1(\R^3)$, which in particular implies $V_\alpha(E_n)\to V_\alpha(E)$. Moreover, building on ideas of \cite{Simo93}, one can show as in \cite[p.905,p.907]{Schy12} (in particular exploiting the monotonicity formula \cite[Theorem 3]{Schy12}, see also \cite[Section 2.1]{KuSc12}) that $\mu=|\nabla\Chi_{E}|$ is an integer rectifiable 
varifold with mean curvature in $L^2$
and generalized Willmore energy
\begin{align*}
	\W(\mu) \,\leq\, \liminf_{k\to\infty} \W( E_n).
\end{align*}
In order to obtain existence of a minimizer in $\M(|B_1|)$ it remains to prove further regularity properties of $\partial E$ (for example by adapting the arguments in \cite{Schy12}). This goes however beyond the scope of this work.
\end{remark}
%
\subsection{Minimality of the ball for small charge}
We now prove that for $\alpha\in(1,3)$, minimizers of \eqref{eq:def-min-cs} are balls for small $Q$.    
\begin{theorem}\label{teomin}
For any $\alpha\in(1,3)$ there exists $Q_4>0$ such that for every $0<Q<Q_4$ the only minimizers of $\F_Q$ in $\M(|B_1|)$ are balls.
\end{theorem}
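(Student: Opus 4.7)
The strategy is to transplant the two--dimensional argument of Proposition \ref{prop:simply_connected} to $d=3$. Rather than working with a putative minimizer, I will argue directly: let $E\in\M(|B_1|)$ be \emph{any} set with $\F_Q(E)\le \F_Q(B_1)$ and show that for $Q$ sufficiently small $E$ must be a translate of $B_1$.

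First, since $\F_Q(E)\le \F_Q(B_1)$ gives
\begin{equation*}
  W(E)-W(B_1) \,\le\, Q\big(V_\alpha(B_1)-V_\alpha(E)\big) \,\le\, QV_\alpha(B_1),
\end{equation*}
for $Q$ small we have $W(E)\le 8\pi-\delta$ for some fixed $\delta>0$, and hence Proposition \ref{prop:bounds} applies. In particular $\partial E$ is connected and its area and diameter are uniformly controlled; by the Willmore conjecture of Marques--Neves, shrinking $Q$ further we may also assume $W(E)\le 4\pi +\delta_0$ for any prescribed $\delta_0>0$, which forces $\partial E$ to be a topological sphere.

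The second and main step is a quantitative Willmore stability estimate of the form
\begin{equation}\label{eq:Willmore-iso-plan}
  P(E)-P(B_1)\,\le\, C\big(W(E)-W(B_1)\big),
\end{equation}
valid for $E\in\M(|B_1|)$ with $W(E)-W(B_1)$ small. This is the three--dimensional analogue of \eqref{quantWill2}, and it can be obtained by combining the uniform area bound of Proposition \ref{prop:bounds} with the $W^{2,2}$--stability of De Lellis--Müller \cite{DeMu05} and the control of the isoperimetric deficit by the Willmore deficit of Röger--Schätzle \cite{RoeSc12}: the first two ingredients reduce matters, after a translation and dilation that fixes the volume, to a nearly--spherical set to which a Fuglede--type Taylor expansion of $P$ and $W$ about $\partial B_1$ applies, as in the proof of Theorem \ref{thm:quantWill}.

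The remaining pieces fit together as in the planar case. By the three--dimensional analogue of \cite[Proposition 7.1]{KnMu13} (see \cite{KnMu14,F2M3}), there exists $Q_1>0$ such that
\begin{equation*}
  P(E)-P(B_1)\,\ge\, Q_1\big(V_\alpha(B_1)-V_\alpha(E)\big)\qquad\text{for every }E\in\M(|B_1|).
\end{equation*}
Combining with \eqref{eq:Willmore-iso-plan} and the inequality $W(E)-W(B_1)\le Q(V_\alpha(B_1)-V_\alpha(E))$ from Step~1, we obtain
\begin{equation*}
  Q_1\big(V_\alpha(B_1)-V_\alpha(E)\big)\,\le\, P(E)-P(B_1)\,\le\, CQ\big(V_\alpha(B_1)-V_\alpha(E)\big).
\end{equation*}
Choosing $Q_4<\min\{Q_1/C,\ldots\}$ makes this incompatible with $V_\alpha(E)<V_\alpha(B_1)$, hence $V_\alpha(E)\ge V_\alpha(B_1)$; the Riesz rearrangement inequality then forces equality and, by its equality case, $E$ is a translate of $B_1$.

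The main obstacle is establishing \eqref{eq:Willmore-iso-plan} in the form needed, i.e.~a linear control of the perimeter deficit by the Willmore deficit. The ingredients are all in the literature, but some care is required to handle the matching of the optimal sphere (radius and centre) produced by De Lellis--Müller with the volume constraint $|E|=|B_1|$ before applying the Fuglede expansion. Everything else is essentially a rerun of the 2D argument once the quantitative Willmore estimate is in place.
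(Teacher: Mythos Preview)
Your plan is correct and, in fact, cleaner than the paper's argument; the two routes diverge after the common opening (bounding $W(E)-4\pi\le Q\,V_\alpha(B_1)$, invoking Proposition~\ref{prop:bounds} for the area/diameter bound, and Marques--Neves to force sphere type).

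The paper closes the loop geometrically: De~Lellis--M\"uller produces a $W^{2,2}$ parametrization $\Psi:\partial B_r\to\partial E$, and Sobolev embedding plus R\"oger--Sch\"atzle control $\omega=\|\Psi-\id\|_{C^0}$ and $r-1$ by $\sqrt{Q}\,(V_\alpha(B_1)-V_\alpha(E))^{1/2}$. Then a potential estimate \cite[Lemma~4.5]{KnMu14}, together with the \emph{Lipschitz} continuity of the potential $v_\alpha$ (this is where the restriction $\alpha>1$ enters), yields $V_\alpha(B_1)-V_\alpha(E)\lesssim (r-1+\omega)^2\lesssim Q\,(V_\alpha(B_1)-V_\alpha(E))$.

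You instead chain two black boxes: R\"oger--Sch\"atzle for $P(E)-P(B_1)\lesssim W(E)-W(B_1)$, and the nonlocal isoperimetric stability of Kn\"upfer--Muratov/Figalli--Fusco--Maggi--Millot--Morini for $V_\alpha(B_1)-V_\alpha(E)\lesssim P(E)-P(B_1)$. This bypasses De~Lellis--M\"uller entirely and, since \cite{F2M3} covers all $\alpha\in(0,d)$, actually proves the theorem for the full range $\alpha\in(0,3)$---precisely what the Remark following the paper's proof flags as left open by their method.

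One comment on your write-up: your derivation of \eqref{eq:Willmore-iso-plan} is overcomplicated. Once you have sphere type and the a~priori area bound, R\"oger--Sch\"atzle \cite{RoeSc12} gives $P(E)-P(B_1)\lesssim W(E)-W(B_1)$ directly; you need neither De~Lellis--M\"uller nor a Fuglede expansion for this step. In particular, you should \emph{not} try to pass from the De~Lellis--M\"uller $W^{2,2}$ parametrization to a nearly spherical (normal-graph) representation---that passage is exactly the obstacle the paper's Remark identifies, and your argument happily avoids it.
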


\begin{proof}
For the energy of the unit ball we compute 
\begin{align*}
	\F_Q(B_1) \,=\, 4\pi + C_\alpha Q  \,\leq\, 6\pi\quad\text{ for } Q \leq Q_4= \frac{2\pi}{C_\alpha}.
\end{align*}
Let $E\in\M(|B_1|)$ be such that 
\begin{align*}
	\F_Q(E)\leq \F_Q(B_1). 
\end{align*}
This  implies in particular that 
\begin{align}
	\W( E)-\W(B_1)\,\leq\, Q \big( V_\alpha(B_1)- V_\alpha(E)\big). \label{eq:optB-1a}
\end{align}
Since $\W(E)\leq \F_Q(B_1)\leq 6\pi<2\pi^2$ by \cite[Theorem A]{MaNe14}, $\partial E$ is of sphere type. By Proposition \ref{prop:bounds} the diameter of $E$ and the surface area of $\partial E$ are bounded independently of $Q$.
Furthermore, by \cite{DeMu05}, up to a translation there exists a $W^{2,2}$-parametrization $\Psi:\partial B_r\to\R^3$ of $\partial E$ over $\partial B_r$, where $r\geq 1$ is 
chosen such that $\Ha^2(\partial B_r)=\Ha^2(\partial E)$. Thanks to the uniform bound on $\Ha^2(\partial E)$, \cite{DeMu05} yields
\begin{align*}
	\|\Psi-\id\|_{W^{2,2}}\,\les\, \big(\W( E)-4\pi\big)^{\frac{1}{2}}\,\stackrel{\eqref{eq:optB-1a}}{\les} \sqrt{Q}\big( V_\alpha(B_1)-V_\alpha(E)\big)^{\frac{1}{2}}.
\end{align*}
By Sobolev embedding we also have that
\begin{align}
	\omega\,=\, \|\Psi-\id\|_{C^0}\,\les\, \sqrt{Q}\big( V_\alpha(B_1)-V_\alpha(E) \big)^{\frac{1}{2}} \label{eq:optB-6}
\end{align}
becomes arbitrarily small as $Q$ tends to zero. Furthermore, by \cite{RoeSc12} and $|E|=|B_1|$,
\begin{align*}
	\Ha^2(\partial E)-4\pi \,\les\, \big(\W( E)-4\pi\big), 
\end{align*}
which implies
\begin{align}
	r-1\,\les\, \big(\W( E)-4\pi\big)\,\les\, Q\big(V_\alpha(B_1)-V_\alpha(E)\big). \label{eq:optB-4}
\end{align}

Let $v_{\alpha}(x)= \displaystyle\int_{B_1}\frac{dy}{|x-y|^{3-\alpha}}$ denotes the potential of the unit ball. Applying \cite[Lemma 4.5]{KnMu14} we get that for every $c\in \R$,
\begin{align*}
	V_\alpha(B_1) - V_\alpha(E) \,&\leq\, 2   \int_{B_1\backslash E} (v_{\alpha}-c)  -2\int_{E\backslash B_1}(v_{\alpha}-c)\, \,=\,2   \int_{\R^3} (\Chi_{B_1} - \Chi_{ E} )(x)(v_{\alpha} -c)\, . \notag 
\end{align*}
Using that $v_\alpha$ is radially symmetric, we can choose $c=v_\alpha\lt(\frac{x}{|x|}\rt)$, which by Lipschitz continuity of $v_\alpha$ (see \cite[Lemma 4.4]{KnMu14}) gives
\begin{equation*}V_\alpha(B_1) - V_\alpha(E) \leq C_\alpha \int_{E\Delta B_1} ||x|-1| 
	\leq\, C_\alpha \int_{B_{r+\omega}\backslash B_{1-\omega}} \big||x|-1\big|\,,
\end{equation*}
where we have used that since $r\ge 1$, $ B_{1-\omega}\subset E\subset B_{r+\omega}$.
Using \eqref{eq:optB-6} and \eqref{eq:optB-4}, this yields for $Q$ small
\begin{align*}
	V_\alpha(B_1)-V_\alpha(E) \,\leq\, C_\alpha (r-1+\omega)^2\leq C_\alpha Q\big(V_\alpha(B_1)-V_\alpha(E)\big),
\end{align*}
which implies that if $Q$ is small enough then $V_\alpha(E)=V_\alpha(B_1)$ and therefore $E=B_1$.
\end{proof}

\begin{remark}
 In light of \cite[Theorem 1.3]{F2M3}, we expect this result to hold also for $\alpha\in(0,1]$.
However, since our proof relies on the rigidity estimate \cite[Theorem 1.1]{DeMu05} we must work with
sets which are parameterized by a small $W^{2,2}$ function on the sphere rather than with nearly spherical sets. 
For such sets it is unclear how to obtain a Taylor expansion of $V_\alpha$ analogous to \cite[Lemma 5.3]{F2M3}. To overcome this issue, we used that for $\alpha\in(1,3)$
the potential $v_\alpha$ is Lipschitz continuous. This is not the case for $\alpha\in(0,1]$ (see \cite{KnMu14}). 
Of course, this problem would be solved if one could prove an improved convergence theorem for minimizers (or almost minimizers) of \eqref{eq:def-min-cs}.
\end{remark}

\begin{remark}
As a consequence of Theorem \ref{teomin} and the isoperimetric inequality, balls are the only minimizers 
of $\F_{\lambda,Q}$ in $\M(|B_1|)$
for any $0<Q<Q_4$ and  any $\lambda>0$.
\end{remark}

\begin{remark}
By rescaling it follows from Theorem \ref{teomin}
  that for $Qm^{\frac{3+\alpha}{3}}<Q_4$,   the only minimizers of $\F_Q$ in $\M(m)$ are balls.
\end{remark}

\subsection{Properties of minimizers for large charge}
For large charge, we are not able to prove or disprove that minimizers  of $\F_Q$ exist. In the following proposition we just point out that if minimizers exist, they must have more and more degenerate isoperimetric quotient 
as the charge increases. This is somewhat reminiscent of \cite[Theorem 1]{Schy12}.

\begin{proposition}\label{prop:min-large-m}
Assume that there exists a sequence $Q_n\to\infty$ such that for every  $n\in\N$ there exists a set $E_n\in \M(|B_1|)$ with $\F_{Q_n}(E_n)<8\pi$. Then, 
\begin{align*}
	\lim_{n\to \infty}\Ha^2(\partial E_n)\,= \infty \quad\text{ and }\quad \lim_{n\to \infty}\W(E_n)= 8\pi.
\end{align*}
\end{proposition}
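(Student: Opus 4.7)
The plan is straightforward since the hypothesis $\F_{Q_n}(E_n) < 8\pi$ with $Q_n \to \infty$ immediately forces the Riesz contribution to vanish. First I would observe that
\[
	V_\alpha(E_n)\,\le\, \frac{8\pi - W(E_n)}{Q_n}\,\le\, \frac{8\pi}{Q_n}\,\to\, 0,
\]
and in particular $W(E_n) < 8\pi$ so $W(E_n)$ is uniformly bounded. The proof then proceeds in three short steps, establishing diameter blowup, area blowup, and Willmore convergence in that order.

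First I would show $\diam(E_n) \to \infty$. If instead $\diam(E_n)\le 2R$ along a subsequence, then after translation $E_n \subset B_R$ and the crude pointwise bound $|x-y|\le 2R$ on $E_n \times E_n$ gives
\[
	V_\alpha(E_n)\,\ge\, \frac{|E_n|^2}{(2R)^{3-\alpha}}\,=\, \frac{|B_1|^2}{(2R)^{3-\alpha}}\,>\,0,
\]
contradicting $V_\alpha(E_n)\to 0$. Next, I would invoke the upper half of the diameter estimate \eqref{eq:diam}, namely $\diam(E_n)^2 \le (4/\pi^2)\,\Ha^2(\partial E_n)\, W(E_n)$. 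Since $W(E_n)$ is uniformly bounded and $\diam(E_n)\to\infty$, this forces $\Ha^2(\partial E_n)\to\infty$, which is the first claimed limit.

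For the second claim, since $W(E_n) < 8\pi$ it suffices to show $\liminf_{n\to\infty} W(E_n) \ge 8\pi$. If this failed, there would exist $\delta>0$ and a subsequence (not relabeled) with $W(E_n)\le 8\pi - \delta$ for all $n$. But then Proposition~\ref{prop:bounds} would provide a uniform area bound $\Ha^2(\partial E_n)\le C_\delta$, contradicting the area blowup just established. Hence $W(E_n)\to 8\pi$.

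There is essentially no technical obstacle here: both key ingredients (Proposition~\ref{prop:bounds} and the diameter--area estimate \eqref{eq:diam}) are already in place, and the only care needed is to order the deductions correctly so that the uniform $W$-bound (rather than a strict inequality $W\le 8\pi - \delta$) suffices for the diameter/area argument, after which Proposition~\ref{prop:bounds} is applied only to rule out a strictly smaller Willmore bound.
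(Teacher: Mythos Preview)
Your proof is correct and uses the same three ingredients as the paper: the crude bound $V_\alpha(E)\ges \diam(E)^{-3+\alpha}|B_1|^2$, the upper diameter estimate \eqref{eq:diam}, and Proposition~\ref{prop:bounds}. The only difference is organizational: the paper chains the first two ingredients together and optimizes in $W(E_n)$ to extract the quantitative rate $\Ha^2(\partial E_n)\ges Q_n^{2/(3-\alpha)}$, whereas you argue qualitatively in three separate steps (first $V_\alpha\to 0$, then $\diam\to\infty$, then $\Ha^2\to\infty$), which is slightly cleaner but loses the explicit rate; since the proposition only asks for the qualitative conclusion, nothing is lost.
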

\begin{proof}
We have  by the minimality of $E_n$
\begin{align*}
	8\pi -\W(E_n) \,&>\, Q_n V_\alpha(E_n)\\
	&\geq\,  Q_n |B_1|^{2}\diam(E_n)^{-3+\alpha}\\
	\,&\stackrel{\eqref{eq:diam}}{\ges}\,  Q_n \Big(\Ha^2(\partial E_n)^{\frac{1}{2}}\W(E_n)^{\frac{1}{2}}\Big)^{-3+\alpha}
	\,\ges\,   Q_n \Ha^2(\partial E_n)^{\frac{-3+\alpha}{2}} \W(E_n)^{\frac{-3+\alpha}{2}}\\
\end{align*}
This yields, 
\begin{equation*}
	8\pi >  \W(E_n) + C Q_n \Ha^2(\partial E_n)^{\frac{-3+\alpha}{2}} \W(E_n)^{\frac{-3+\alpha}{2}} 
	\geq C_\alpha  Q_n^{\frac{2}{5-\alpha}} \Ha^2(\partial E_n)^{\frac{-3+\alpha}{5-\alpha}},
\end{equation*}
where we have optimized in $\W(E_n)$ in the second line. This gives $\Ha^2(\partial E_n)> C_\alpha Q_n^{\frac{2}{3-\alpha}}\to \infty$. We then conclude by \eqref{eq:area-estimate} that $W(E_n)\to 8\pi$.
\end{proof}

\subsection{A non-existence result}
For the full functional $\F_{\lambda,Q}$, $\lambda>0$ we can prove non-existence for $\alpha\in (2,3)$ and $Q$ large enough.

\begin{proposition}\label{prop:nonexistence-3D}
 For every $\alpha\in (2,3)$, there exists $Q_5(\alpha)$ such that for every $\lambda, Q$ with $Q\ge Q_5(\lambda^{-\frac{3-\alpha}{2}}+\lambda^{\frac{3+\alpha}{2}})$, 
  there is no minimizer of
 $\F_{\lambda,Q}$ in $\M(|B_1|)$. 
\end{proposition}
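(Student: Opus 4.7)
The plan is to mirror the two-dimensional non-existence proof (Proposition \ref{prop:non-existence}). Assume for contradiction that a minimizer $E^*\in\M(|B_1|)$ exists. First, $E^*$ must be connected: otherwise, writing $E^*=E_1\sqcup E_2$ and translating $E_2$ by $tv$ with $|t|\to\infty$ leaves $P$ and $W$ unchanged but strictly decreases the cross Riesz-term $\int_{E_1}\int_{E_2+tv}|x-y|^{\alpha-3}\,dx\,dy$ (strictly positive for finite $t$, vanishing in the limit), contradicting minimality. For a connected $E^*$ I combine three ingredients: the Willmore bound $W(E^*)\geq 4\pi$, Simon's diameter estimate \eqref{eq:diam} in the rearranged form $P(E^*)W(E^*)\gtrsim \diam(E^*)^2$, and the trivial lower bound $V_\alpha(E^*)\geq |B_1|^2\,\diam(E^*)^{\alpha-3}$. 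The AM--GM inequality then yields
\[
\F_{\lambda,Q}(E^*) \,\geq\, 2\sqrt{\lambda P(E^*)W(E^*)} + Q V_\alpha(E^*)\,\gtrsim\, \sqrt{\lambda}\,\diam(E^*) + Q\,\diam(E^*)^{\alpha-3},
\]
and minimizing over $\diam(E^*)$ (at $\diam(E^*)\sim(Q/\sqrt{\lambda})^{1/(4-\alpha)}$) produces the unconditional lower bound $\F_{\lambda,Q}(E^*)\geq c_\alpha\,\lambda^{(3-\alpha)/(2(4-\alpha))}\,Q^{1/(4-\alpha)}$.

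For the competitor I will take $N$ identical thin shells $B_R(x_i)\setminus B_{R-\delta}(x_i)$ placed far apart, with $\delta=1/(3NR^2)$ to meet the volume constraint. Spacing the centres widely makes cross Riesz-interactions arbitrarily small, so $\inf\F_{\lambda,Q}$ is bounded above by $\lambda NR^2+8\pi N+c_\alpha QR^{\alpha-3}/N$, using $W=8\pi$ per shell, $P\sim R^2$ per shell, and by Lemma \ref{lem:annulus} (which requires $\alpha>1$) together with scaling, $V_\alpha(B_R\setminus B_{R-\delta})\lesssim R^{1+\alpha}\delta^2=R^{\alpha-3}/N^2$. The optimal choice is $R\sim\lambda^{-1/2}$ (which balances $\lambda P$ and $W$) and $N\sim Q^{1/2}\lambda^{(3-\alpha)/4}$ (which balances the remaining two terms), giving a competitor energy of order $Q^{1/2}\lambda^{(3-\alpha)/4}$. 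The geometric feasibility constraints $N\geq 2$ and $\delta\leq R/2$ (equivalently $NR^3\gtrsim 1$) translate precisely into $Q\gtrsim \lambda^{-(3-\alpha)/2}$ and $Q\gtrsim \lambda^{(3+\alpha)/2}$ respectively, which are exactly the two terms in the hypothesis.

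Comparing lower and upper bounds, the required strict inequality $c_\alpha\lambda^{(3-\alpha)/(2(4-\alpha))}Q^{1/(4-\alpha)}>C_\alpha Q^{1/2}\lambda^{(3-\alpha)/4}$ reduces, using $\alpha-2>0$, to $Q>C\lambda^{-(3-\alpha)/2}$, which is again ensured by choosing $Q_5$ large enough. The main obstacle is that the restriction $\alpha\in(2,3)$ is essential to this strategy: only for $\alpha>2$ does the lower-bound $Q$-exponent $1/(4-\alpha)$ exceed the competitor $Q$-exponent $1/2$, so that the contradiction can only trigger for large $Q$ in this regime; for $\alpha\leq 2$ the multi-shell competitor is simply too costly to beat the connected-set lower bound. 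I expect the main technical care to lie in the simultaneous emergence of both hypothesis terms $\lambda^{-(3-\alpha)/2}$ and $\lambda^{(3+\alpha)/2}$ from the two independent feasibility constraints on $(N,R)$, which dictate the sharp form of the non-existence regime.
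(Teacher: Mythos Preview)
Your proposal is correct and follows essentially the same approach as the paper: the connected-set lower bound via AM--GM and Simon's diameter estimate, the multi-shell competitor with $R\sim\lambda^{-1/2}$ and $N\sim Q^{1/2}\lambda^{(3-\alpha)/4}$, and the identification of the two feasibility constraints with the two terms in the hypothesis are all identical. One minor point of care the paper makes explicit: since $\partial E^*$ need not be connected even when $E^*$ is, Simon's estimate \eqref{eq:diam} is applied to the single boundary component $\Sigma$ of $\partial E^*$ satisfying $\diam(\Sigma)=\diam(E^*)$, and then $\H^2(\Sigma)\leq P(E^*)$, $W(\Sigma)\leq W(E^*)$ give your claimed inequality $P(E^*)W(E^*)\gtrsim\diam(E^*)^2$.
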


\begin{proof}
Assume that $Q\gg \lambda^{-\frac{3-\alpha}{2}}+\lambda^{\frac{3+\alpha}{2}}$. 
If a minimizer  $E$ of \eqref{problem2d-gen} exists then it must be connected. Therefore, there exists one connected component $\Sigma$ of $\partial E$ such that $\diam(\Sigma)=\diam(E)$.
By \eqref{eq:diam}, we have 
 \[
  \diam(E)=\diam(\Sigma)\les\sqrt{\H^2(\Sigma) W(\Sigma)}\les \sqrt{P(E) W(E)}.
 \]
Therefore,
\begin{multline}\label{hypabsurd3d}
 \F_{\lambda,Q}(E)= \lambda P(E)+W(E)+Q V_\alpha(E)\ges \sqrt{\lambda} \sqrt{P(E) W(E)} +\frac{Q}{\diam(E)^{3-\alpha}}\\
 \ges \sqrt{\lambda} \diam(E)+\frac{Q}{\diam(E)^{3-\alpha}}\ges 
 \lambda^{\frac{3-\alpha}{2(4-\alpha)}} Q^{\frac{1}{4-\alpha}}.
\end{multline}
For $N\gg1$ and $R$ to be chosen below, consider a competitor $F_{N,R}$ made of $N$ identical annuli of inner radius $R$ and outer radius $R+h$ with $h$ such that each  volume is equal to $N^{-1}$. 
As long as $N^{-1}\ll R^3$, we have $h\simeq R^{-2}N^{-1}$ and $h\ll R$. By \eqref{eq:estimValpha}, we have
\[\F_{\lambda,Q}(F_{N,R})\simeq N\lambda R^2 + N +\frac{Q}{N R^{3-\alpha}}.\] 
We now choose 
\begin{equation*}
 R=\lambda^{-\frac{1}{2}},\quad N=\lambda^{\frac{3-\alpha}{4}}Q^{\frac12}
\end{equation*}
and observe that $N\gg 1$ since by hypothesis $Q\gg \lambda^{-\frac{3-\alpha}{2}}$ and that $N^{-1}\ll R^3$ since $Q\gg \lambda^{\frac{3+\alpha}{2}}$. We then obtain
\[
 \F_{\lambda,Q}(F_{N,R}) \simeq \lt(\frac{\lambda^{3-\alpha}Q^2}{N^{\alpha-1}}\rt)^{\frac{1}{5-\alpha}}+N 
 \simeq\lambda^{\frac{3-\alpha}{4}}Q^{\frac12}
\]
which gives a contradiction to \eqref{hypabsurd3d}  and the fact that $\F_{\lambda,Q}(F_{Q,R})\ge \F_{\lambda,Q}(E)$ since by hypothesis $Q\gg \lambda^{-\frac{3-\alpha}{2}}$ and $\alpha>2$.
\end{proof}

\section*{Acknowledgments}
M. Goldman has been partly supported by the PGMO
research project COCA. M. Novaga was partly supported by the CNR-GNAMPA, 
and by the University of Pisa via grant PRA-2017-23.

\bibliography{RieszWillmore}
\bibliographystyle{plain}

\end{document}